\documentclass[a4paper,10pt]{article}
\usepackage{amssymb}
\textwidth 16cm \oddsidemargin -0.01cm
\addtolength{\textheight}{3cm} \addtolength{\topmargin}{-1.5cm}
\usepackage{latexsym}
\usepackage{amsmath}
\usepackage{amsthm}
\usepackage{amsfonts}
\usepackage{amssymb}
\usepackage{graphicx}

\newtheorem{Th}{Theorem}
\newtheorem{Prop}{Proposition}
\newtheorem{Co}{Corollary}
\newtheorem{Lm}{Lemma}

\newtheorem{Rm}{Remark}

\newcommand{\be}{\begin{equation}}
\newcommand{\ee}{\end{equation}}
\newcommand{\R}{\mathbb{R}}
\newcommand{\N}{\mathbb{N}}
\newcommand{\C}{\mathbb{C}}
\newcommand{\F}{\mathcal{F}}
\newcommand{\T}{\mathbb{T}}
\newcommand{\Z}{\mathbb{Z}}

\newcommand\res{\mathop{\hbox{\vrule height 7pt width .5pt depth 0pt
\vrule height .5pt width 6pt depth 0pt}}\nolimits}

\def\lf{\left}
\def\rg{\right}

\def\al{\alpha}
\def\la{\lambda}
\def\e{\varepsilon}

\def\ds{\displaystyle}
\def\ov{\overline}
\def\Om{\Omega}
\def\om{\omega}
\def\p{\partial}
\def\bn{\vec{n}}

\def\bbe{\vec{e}}
\def\bbf{\vec{f}}
\def\bH{\vec{H}}
\def\bu{\vec{u}}
\def\bv{\vec{v}}

\def\bC{\vec{C}}
\def\bA{\vec{A}}
\def\bB{\vec{B}}
\def\bD{\vec{D}}
\def\bE{\vec{E}}
\def\bF{\vec{F}}
\def\bL{\vec{L}}
\def\bR{\vec{R}}

\def\bw{\vec{w}}

\def\bP{\vec{\Phi}}
\def\bPsi{\vec{\Psi}}

\def\res{\mathop{\hbox{\vrule height 7pt width .5pt 
depth 0pt\vrule height .5pt width 6pt depth 0pt}}\nolimits}
\newcommand{\D}{D}

\newcommand{\Sp}{\mathbb{S}}

\renewcommand{\div}{{\rm{div}}}
\def\bI{\mathbb I}
\def\bbI{\vec{\mathbb I}}

\def\k{{\rm k}}

\hyphenation{to-po-lo-gy rea-li-zing Ku-wert}

\begin{document}
\title{A frame energy for immersed tori and applications to regular homotopy classes}
\author{Andrea Mondino\footnote{Department of Mathematics, ETH Zentrum,
CH-8093 Z\"urich, Switzerland, @mail: andrea.mondino@math.ethz.ch. \newline A. M. is supported by the ETH fellowship.} , Tristan Rivi\`ere\footnote{Department of Mathematics, ETH Zentrum,
CH-8093 Z\"urich, Switzerland.}}
\date{ }
\maketitle

{\bf Abstract :}{\it The paper is devoted to study the Dirichelet energy of moving frames on 2-dimensional tori immersed in the euclidean $3\leq m$-dimensional space.  This functional, called Frame energy, is naturally linked to the Willmore energy of the immersion and on the conformal structure of the abstract underlying surface. As first result, a Willmore-conjecture type lower bound is established : namely for every torus immersed in $\R^m$, $m\geq 3$, and any moving frame  on it, the frame energy is at least $2\pi^2$ and equalty holds if and only if $m\geq 4$, the immersion is the standard Clifford torus  (up to rotations and dilations), and the frame is the flat one. Smootheness of the critical  points of the frame energy is proved after the discovery of hidden conservation laws and, as application, the minimization of the Frame energy in regular homotopy classes of immersed tori in $\R^3$ is performed.
}
\medskip

\noindent{\bf Math. Class.} 30C70, 58E15, 58E30, 49Q10, 53A30, 35R01, 35J35, 35J48, 35J50.

\section{Introduction}\label{Sec:Intro}
The purpose of this paper is to  study the Dirichlet energy of moving frames associated to  tori immersed in $\R^m$, $m\geq 3$. Moving frames have been played a key role  in the modern theory of immersed surfaces starting from the pioneering works of Darboux \cite{Darb},  Goursat \cite{Gours} ,  Cartan \cite{Cart}, Chern \cite{Chern1}-\cite{Chern2}, etc. (note also that in the book of Willmore \cite{Will}, the theory of surfaces is presented from Cartan's point of view of moving frames, and the recent book of  H\'elein \cite{Hel} is devoted to the role of moving frames in modern analysis of submanifolds; see also the recent introductory  book  of Ivey and Landsberg \cite{IvLan}).  Indeed, due to the strong link between moving frames on an \emph{immersed} surface and the conformal structure of the underlying \emph{abstract} surface (see later in the introduction for more explanations), the importance of selecting  a ``best moving frame'' in surface theory is  comparable to fixing an optimal gauge in physical problems (for instance for the study of Einstein's equations of general relativity it is natural  to work in the gauge of the so called harmonic coordinates, for the analysis of Yang-Mills equation it is convenient  the so called Coulomb gauge, etc.).
\\

Before going to the description of the main  results of the present paper, the  objects of the investigation of the this work must be defined.
\\
Let $\T^2$ be the abstract 2-torus  (seen as 2-dimensional smooth manifold) and let $\bP:\T^2\hookrightarrow \R^m, m\geq 3,$ be a smooth immersion (let us start with smooth immersions, then we will move to weak immersions). One denotes with $T\bP(\T^2)$ the tangent bundle to $\bP(\T^2)$, a pair $\bbe:=(\bbe_1,\bbe_2)\in \Gamma(T\bP(\T^2))\times \Gamma(T\bP(\T^2))$ is said \emph{a moving frame} on $\bP$ if, for every $x \in \T^2$, the couple $(\bbe_1(x),\bbe_2(x))$ is a  positive orthonormal basis for $T_x\bP(\T^2)$ (with positive we mean that we fix a priori an orientation of $\bP(\T^2)$ and that the moving frame agrees with it). 

Given $\bP$ and $\bbe$ as above we define the \emph{frame energy} as the Dirichelet energy of the frame, i.e.
\be\label{eq:defFrameEn}
\F(\bP,\bbe):=\frac{1}{4}\int_{\T^2} |d \bbe|^2 \, dvol_g,
\ee
where $d$ is the exterior differential along $\bP$, $dvol_g$ is the area form given by the immersion $\bP$ (this can be seen equivalently as the restriction to $\bP(\T^2)$ of the 2-dimensional Hausdorff measure on $\R^m$, or as the volume form associated to the pullback metric $g:=\bP^*(g_{\R^m})$ where $g_{\R^m}$ is the euclidean metric on $\R^m$), and $|d \bbe|$ is the length of the exterior differential of the frame which is given in local coordinates by $|d \bbe|^2=\sum_{k=1}^2 |d \bbe_k|^2=\sum_{i,j,k=1}^2  g^{ij} \p_{x_i} \bbe_{k} \cdot \p_{x_j} \bbe_k$; in the paper $\bu\cdot \bv$ or $(\bu,\bv)$ denotes the scalar product of vectors in $\R^m$.

Let $\bn$ be the unit simple $m-2$-multivector giving   the  normal space to $\bP$ in terms of the Hodge duality operator in $\R^m$
\be\label{eq:defbn}
\bn:=\star_{\R^m} \frac{\partial_{x_1}\bP \wedge \partial_{x_2} \bP}{|\partial_{x_1}\bP \wedge \partial_{x_2}, \bP|}\quad.
\ee
In $\R^3$, for instance, it can be written in terms  of vector product
\be\label{eq:defbnR3}
\bn:= \frac{\partial_{x_1}\bP \times \partial_{x_2} \bP}{|\partial_{x_1}\bP \times \partial_{x_2} \bP|} \quad.
\ee
Let $\pi_T:\R^m\to T\bP(\T^2)$ and   $\pi_{\bn}:\R^m\to N\bP(\T^2)$ the orthonormal projections on the tangent and on the normal space  respectively. Recall that the second fundamental form $\bbI$ of the immersion $\bP$ is defined by
\be\label{eq:defbbI}
\bbI_{ij}:=\pi_{\bn} (\partial^2_{x_i x_j} \bP)
\ee
and the mean curvature $\bH$ is given by half of its trace
\be\label{eq:defbH}
\bH:=\frac{1}{2} g^{ij} \, \bbI_{ij}\quad .
\ee 
\\Notice that, writing $d\bbe_i=\pi_T(d \bbe_i)+\pi_{\bn}(d \bbe_i)=(d\bbe_i, \bbe_{i+1}) \bbe_{i+1}+\pi_{\bn}(d\bbe_i)$-where $\Z_2$-indeces are used-, the frame energy decomposes as
\be\label{eq:F=FT+bbI}
\F(\bP, \bbe)= \frac{1}{2} \int_{\T^2} |\bbe_1\cdot d \bbe_2|_g^2 \, dvol_g + \frac{1}{4}\int_{\T^2} |\bbI|^2 \, dvol_g\quad . 
\ee
The tangential part
\be\label{def:FT}
\F_T(\bP, \bbe):= \frac{1}{2} \int_{\T^2} |\bbe_1\cdot d \bbe_2|_g^2\, dvol_g,
\ee
is equal to the $L^2$-norm of the covariant derivative of the frame with respect to the Levi-Civita connection, whereas the normal part corresponds to the Willmore energy after having applied Gauss Bonnet theorem: \be\label{eq:WHbbI}
W(\bP):=\int_{\T^2} |\bH|^2 \, dvol_g= \frac{1}{4}\int_{\T^2} |\bbI|^2 \, dvol_g, 
\ee
where the above defined $W$ is the so-called Willmore functional, we get
 \be\label{eq:FFTW}
\F(\bP, \bbe)=\F_T(\bP, \bbe)+ W(\bP)\quad.
\ee

Let us observe that the frame energy $\F$ is invariant under scaling and under conformal transformations of the metric $g$, but not under conformal transformations of $\R^m$ (to this purpose note that, by definition, the moving frame has to be orthonormal with respect to the \emph{extrinsic} metric, i.e. $g_{\R^m}$, but the norm of the derivative as well as the volume form is computed with respect to the \emph{intrinsic} metric $g$). Therefore, even if natural on its own, $\F$ can be seen as a more coercive Willmore energy where the extra term $\F_T$ prevents the degenerations caused by the action of the Moebius group of $\R^m$ and  the degeneration of the confomal class of  the abstract torus. More precisely we have the  the following proposition.

\begin{Prop}\label{Prop:BoundCC}
For every $C>0$, the metrics induced by  the framed immersions in $\F^{-1}([0,C])$ are contained in a compact subset of the moduli space of the torus.
\end{Prop}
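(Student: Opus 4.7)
My plan is to reduce the compactness of the conformal class to a Dirichlet-energy bound on the conformal factor of the induced metric with respect to a flat representative. By the scale invariance of $\F$, I would first normalize the immersion so that $\int_{\T^2}dvol_g=1$. By uniformization I would then write $g=e^{2\la}g_0$, where $g_0$ is a flat metric of unit area on $\T^2$; its conformal class $[g_0]\in \mathbb{H}^2/\mathrm{SL}(2,\Z)$ is exactly the object that has to be kept in a compact set.

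The crucial estimate is to extract from $\F(\bP,\bbe)\leq C$ a Dirichlet bound on $\la$ with respect to $g_0$. Working in global isothermal coordinates $(u,v)$ for $g_0$ on the universal cover, I would consider the canonical conformal orthonormal frame $\bbe^c_i:=e^{-\la}\p_{x_i}\bP$ and compute directly, using $|\p_u\bP|=|\p_v\bP|=e^\la$ and $\p_u\bP\cdot\p_v\bP=0$, that its connection 1-form is
\[
\omega^c:=\bbe^c_1\cdot d\bbe^c_2=\la_v\,du-\la_u\,dv=-\star_{g_0}d\la,
\]
which is a coexact 1-form on $(\T^2,g_0)$. Any other admissible frame $\bbe$ differs from $\bbe^c$ by a gauge rotation $R(\theta)\in\mathrm{SO}(2)$, under which the connection form $\omega=\bbe_1\cdot d\bbe_2$ is shifted by the closed form $d\theta$. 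Since closed 1-forms have no coexact component in the Hodge decomposition on a closed surface, the coexact part of $\omega$ is gauge-invariant and equals $\omega^c$. The $L^2$-orthogonality of the three Hodge pieces, together with the conformal invariance of the Dirichlet energy of a function in dimension two, then yields
\[
\|\nabla\la\|_{L^2(g_0)}^2=\|\omega^c\|_{L^2(g)}^2\leq\|\bbe_1\cdot d\bbe_2\|_{L^2(g)}^2=2\,\F_T(\bP,\bbe)\leq 2C.
\]

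To close the argument, I would combine this Dirichlet bound on $\la$ with the Willmore bound $W(\bP)\leq C$ provided by \eqref{eq:FFTW} and the area constraint $\int e^{2\la}\,du\,dv=1$. The Gauss equation gives $\int_{\T^2}|K|\,dvol_g\leq 4C$, and the Liouville equation $-\Delta_{g_0}\la=K e^{2\la}$ then produces also $\|\Delta_{g_0}\la\|_{L^1(g_0)}\leq 4C$. Arguing by contradiction, if along a sequence in $\F^{-1}([0,C])$ the modulus of $g_{0,k}$ escaped to the boundary of Teichm\"uller space, the flat representatives would develop arbitrarily thin collars of arbitrarily large modulus; a slicing argument on such a collar, combined with the uniform $L^2$-bound on $\nabla\la_k$, a Moser--Trudinger-type inequality and the area normalization, would force either the total area of $g_k$ to escape from $1$ or the Willmore energy to blow up, contradicting the hypothesis. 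I expect this last Mumford-type compactness step to be the main technical obstacle: the conformal class \emph{can} degenerate under bounded Willmore energy alone (for instance through M\"obius images of pinching surfaces), so it is really the new Dirichlet bound on $\la$ coming from $\F_T$ --- and its subtle interplay on the thin collar with the area constraint --- that must be exploited to rule out the degeneration.
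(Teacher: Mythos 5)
Your preliminary reductions are sound and in fact mirror the paper's Reduction 3: the identity $\bbe_1^c\cdot d\bbe_2^c=\star d\la$ for the coordinate frame, the gauge shift of the connection form by the closed form $d\theta$, and the Hodge-orthogonality of the coexact part do give the conformally invariant bound $\|\nabla\la\|^2_{L^2(g_0)}\le 2\F_T\le 2C$ (this is precisely the content of the paper's estimate (II.9)), and the Gauss/Liouville step yielding $\|\Delta_{g_0}\la\|_{L^1}\le C'$ is also correct. The problem is that the entire weight of the proposition rests on your last step, which you yourself flag as the ``main technical obstacle,'' and as sketched it cannot close. The three analytic constraints you have extracted --- $\int_{\T^2}e^{2\la}\,dvol_{g_0}=1$, $\|\nabla\la\|_{L^2(g_0)}\le C$, $\|\Delta_{g_0}\la\|_{L^1(g_0)}\le C$ --- are all satisfied by $\la\equiv 0$ on a flat unit-area torus of \emph{arbitrary} modulus, so no slicing, Moser--Trudinger, or area argument applied to $\la$ alone can produce the desired contradiction. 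What must be ruled out is the existence of an \emph{immersion into $\R^m$ with bounded Willmore energy} inducing such a degenerating metric, and for that you need a genuinely geometric input linking the collar to the ambient space.

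That input is exactly what the paper supplies and your proposal omits: Fenchel's theorem. On a collar of large modulus, almost every short slice $\gamma_y$ has $g$-length tending to zero (by Fubini and the area normalization), yet its image $\bP(\gamma_y)$ is a closed curve in $\R^m$ and therefore has total curvature at least $2\pi$; squaring, applying Cauchy--Schwarz, and integrating over the transverse parameter converts this into the quantitative lower bound $\F(\bP,\bbe)\gtrsim \pi^2\bigl(\tau_2+\tau_2^{-1}\bigr)$ of Proposition II.2, whose right-hand side diverges as the modulus degenerates. (The curvature of $\bP(\gamma_y)$ decomposes into a normal part controlled by $\bbI$ and a tangential part controlled by $\nabla\la$, so both pieces of $\F$ enter, consistent with your observation that $W$ alone cannot control the conformal class.) If you flesh out your ``slicing argument on the collar'' you will be forced to reintroduce this total-curvature lower bound for the slice curves; without it, the proof as written has a genuine gap at its decisive step.
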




Let us mention that the proof of Proposition \ref{Prop:BoundCC} is remarkably elementary and makes use just of the Fenchel lower bound \cite{Fenchel} on the total curvature of a closed curve in $\R^m$.  
\\Combining Proposition  \ref{Prop:BoundCC} with the celebrated results of Li-Yau \cite{LY} and  Montiel-Ros \cite{MonRos} on the Willmore conjecture, we manage to prove the following sharp lower bound (with rigidity) on the frame energy.

\begin{Th}\label{thm:LB}
Let $\bP:\T^2\hookrightarrow \R^m$ be a smooth immersion of the 2-dimensional torus into the Euclidean $3\leq m$-dimensional space and let $\bbe=(\bbe_1,\bbe_2)$ be any moving frame along $\bP$. 

Then the  following lower bound holds:
\be\label{eq:LB}
\F(\bP,\bbe):=\frac{1}{4}\int_{\T^2} \lf|d\bbe\rg|^2 \, dvol_g \geq 2 \pi^2\quad.
\ee  
Moreover, if in \eqref{eq:LB} equality holds then it must be $m\geq 4 $, $\bP(\T^2)\subset \R^m$ must be, up to isometries and dilations in $\R^m$, the Clifford torus 
\be\label{eq:defTCl}
T_{Cl}:=S^1\times S^1 \subset \R^4\subset \R^m\quad,
\ee
and $\bbe$ must be, up to a constant rotation on $T(\bP(\T^2))$, the moving frame given by $(\frac{\p}{\p \theta}, \frac{\p}{\p\varphi})$, where of course $(\theta,\varphi)$ are natural flat the coordinates on $S^1\times S^1$. 
\end{Th}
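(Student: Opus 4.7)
The starting point is the orthogonal decomposition \eqref{eq:FFTW}
\[
\F(\bP,\bbe)=\F_T(\bP,\bbe)+W(\bP),
\]
in which $\F_T\geq 0$. Consequently $\F(\bP,\bbe)\geq W(\bP)$, so the inequality \eqref{eq:LB} reduces to showing $W(\bP)\geq 2\pi^2$ for every smooth torus immersion $\bP$ into $\R^m$, $m\geq 3$, while the rigidity statement reduces to analysing when both $\F_T=0$ and $W=2\pi^2$ hold simultaneously.

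For the Willmore lower bound I would split into cases. If $\bP$ is not an embedding, the Li--Yau inequality applied at a double point already gives $W(\bP)\geq 8\pi>2\pi^2$. One may therefore assume $\bP$ is embedded. Then by Proposition \ref{Prop:BoundCC}, applied with $C=\F(\bP,\bbe)$, the conformal class of the induced metric $g=\bP^*g_{\R^m}$ is confined to a compact region $K$ of the moduli space of $\T^2$ depending only on $\F(\bP,\bbe)$. For conformal classes in $K$ — which, once $\F(\bP,\bbe)$ is close to $2\pi^2$, lie inside the range covered by the Li--Yau and Montiel--Ros results on the Willmore conjecture — the bound $W\geq 2\pi^2$ is already known. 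Hence $\F(\bP,\bbe)\geq W(\bP)\geq 2\pi^2$.

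For the rigidity case, assume $\F(\bP,\bbe)=2\pi^2$. Then both $\F_T(\bP,\bbe)=0$ and $W(\bP)=2\pi^2$ must hold. The vanishing of $\F_T$ forces $\bbe_1\cdot d\bbe_2\equiv 0$; since this $1$-form is precisely the Levi--Civita connection form of $g$ in the orthonormal frame $(\bbe_1,\bbe_2)$, whose exterior derivative is the Gauss curvature $2$-form, its identical vanishing makes $g$ flat. The saturation $W(\bP)=2\pi^2$, combined with the rigidity in the Li--Yau/Montiel--Ros theorem, identifies $\bP(\T^2)$ with a M\"obius image of the standard Clifford torus $S^1\times S^1\subset \R^4$. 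But the stereographic projection of that torus into $\R^3$ is only conformally flat, so the flatness of $g$ forced above excludes $m=3$. For $m\geq 4$, the only M\"obius images that are simultaneously \emph{isometrically} flat are, up to isometries and dilations of $\R^m$, the standard embedding $S^1\times S^1\subset \R^4\subset\R^m$. Finally, on a flat torus the orthonormal frames with vanishing Levi--Civita connection form are precisely the global parallel frames, namely the constant rotations of $(\p_\theta,\p_\vp)$.

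The main obstacle will be the quantitative matching between the compactness region $K$ provided by Proposition \ref{Prop:BoundCC} — produced through the Fenchel lower bound on total curvature and therefore carrying fully explicit constants — and the conformal-class region in which the Willmore lower bound $W\geq 2\pi^2$ is actually known from Li--Yau and Montiel--Ros. Neither ingredient alone closes the argument; it is their combination that does. A secondary, more geometric delicate point is to verify that the Clifford torus admits no intrinsically flat representative in $\R^3$, which is what definitively rules out the codimension-one equality case.
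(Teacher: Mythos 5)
Your overall skeleton (split $\F=\F_T+W$, invoke Montiel--Ros where the conformal class permits, control the conformal class elsewhere, then analyse the equality case) points in the right direction, but the central step of the lower bound is missing. The opening reduction ``$\F\geq W$, so it suffices to prove $W\geq 2\pi^2$ for every smooth immersed torus in $\R^m$'' is not a reduction: that statement \emph{is} the Willmore conjecture, proved only in codimension one (by Marques--Neves, via min-max, which this paper deliberately does not use) and open for $m\geq 4$, which is exactly the case where the theorem is sharp. Your attempted repair --- that Proposition \ref{Prop:BoundCC} confines the conformal class to a compact set which, ``once $\F$ is close to $2\pi^2$, lies inside the range covered by Li--Yau and Montiel--Ros'' --- is precisely the assertion that has to be proved, and the purely qualitative compactness of Proposition \ref{Prop:BoundCC} cannot deliver it. What the paper actually does is establish, via Fenchel's theorem applied to the two families of coordinate curves of a conformal parametrisation by a flat torus $\R^2/(\Z\times\tau\Z)$, the quantitative bound $\F(\bP,\bbe)\geq \pi^2\,(\tau_2+\tau_2^{-1})\,\sin^2\theta/(\sin^2\theta+\cos^4\theta)=:\pi^2 f(\tau_2,\theta)$ (Proposition \ref{Prop:LowerBound1}), and then verifies by direct computation that $f\geq 2$ outside the Montiel--Ros region $\Omega_{LYMR}$ (by evaluating $f$ on $\p\Omega_{LYMR}$ and using monotonicity in $\tau_2$), while inside $\Omega_{LYMR}$ Montiel--Ros gives $W\geq 2\pi^2$. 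Note that outside $\Omega_{LYMR}$ the conclusion is a bound on $\F$, not on $W$ --- the tangential part $\F_T$ genuinely contributes through the Fenchel estimate --- so the chain $\F\geq W\geq 2\pi^2$ you write is unavailable there; and $f<2$ does occur on part of the arc $|\tau|=1$, so the Fenchel bound alone does not suffice either. This dichotomy is the heart of the proof and cannot be deferred as a ``quantitative matching'' to be checked later; as written, the proposal proves nothing beyond what Montiel--Ros already gives.

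On the rigidity part: your derivation of flatness from $\F_T=0$ (vanishing connection form implies zero Gauss curvature) and your identification of the parallel frames are fine, and the paper itself remarks that one could invoke the Li--Yau uniqueness of the Clifford torus in its conformal class; it instead gives an elementary argument by tracking the equality cases in Fenchel and Cauchy--Schwarz to show that the coordinate curves are unit circles lying in two fixed orthogonal planes. But even this part presupposes that equality forces the conformal class to be the square torus, which again rests on the missing quantitative estimate; and the exclusion of $m=3$ is most directly the nonexistence of flat closed surfaces in $\R^3$ (every compact surface in $\R^3$ has a point of positive Gauss curvature), rather than a statement about M\"obius images of the Clifford torus.
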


\begin{Rm}\label{rem:LB}
Let us mention that, thanks to  \eqref{eq:FFTW},  in \emph{codimension one},  the lower bound  \eqref{eq:LB} follows by the recent proof of the Willmore conjecture by Marques and Neves \cite{MN} using min-max principle; the approach here is a more direct energy based consideratton. Indeed from their result non just the frame energy, but the Willmore functional $W(\bP)$ is bounded below by $2\pi^2$ for any smooth immersed torus, and $W(\bP)=2\pi^2$ if and only if $\bP$ is a conformal transformation of the Clifford torus.  
Curiously, our lower bound seems to work better in codimension at least two, where it becomes sharp and rigid; clearly, in codimension  one it is not sharp  because of the nonexistence of flat immersions of the torus in $\R^3$ 
and because of the Marques-Neves proof of the Willmore conjecture. 
\\Let us also mention that Topping  \cite[Theorem 6]{Topp}, using  arguments  of integral geometry (very far from our proof), obtained an analogous lower bound on an analogous frame energy for immersed tori in $S^3$ under the assumption that the underlying conformal class of the immersion is a \emph{rectangular}  flat torus. 
\end{Rm}

For variational matters the framework of smooth immersions has to be relaxed to a weaker notion of immersion introduced by the second author in \cite{Riv2}, that we recall below.
\\

Given any smooth reference metric $g_0$ on $\T^2$ (the definition below is independent of the choice of a smooth $g_0$), the map $\bP:\T^2\to \R^m$ is called \emph{weak  immersion} if the following properties hold
\begin{enumerate}
\item $\bP\in W^{1,\infty}(\T^2, \R^m)$ and the pullback metric $g_{\bP}:=\bP^*g_{\R^m}$ is equivalent to $g_0$, i.e. there exists a constant $C_{\bP}>1$ such that
$$C_{\bP}^{-1} \, g_{\bP}\leq g_0 \leq C_{\bP} \,  g_{\bP} \quad \text{as quadratic forms} \quad,$$
\item denoted by $\bn\in L^\infty(\T^2,\Lambda^{m-2}\R^m)$ the normal space defined a.e. by \eqref{eq:defbn} (or more simply in $\R^3$ by \eqref{eq:defbnR3}), it holds $\bn \in W^{1,2}(\T^2)$; or, equivalently, the second fundamental form $\bbI$ defined a.e. in \eqref{eq:defbbI} is $L^2$ integrable over $\T^2$. 
\end{enumerate}
The space of weak immersions $\bP$ from $\T^2$ into $\R^m$ is denoted by ${\cal E}(\T^2, \R^m)$. 
Recall also that given a weak immersion, up to a local bilipschitz diffeomorphism, we can assume it is locally conformal so it induces a smooth conformal structure on the torus (this result is a consequence of a  combination of works of Toro \cite{To1}-\cite{To2}, M\"uller-Sverak \cite{MS}, H\'elein \cite{Hel} and the second author \cite{Riv2}; for a comprehensive discussion see   \cite{RiCours}). 
\\

Let us remark that Proposition \ref{Prop:BoundCC} and Theorem \ref{thm:LB} holds for weak immersions as well (for more details see Section \ref{Sec:RegHom}).  
\\

In order to perform the calculus of variations of the frame energy, in Section \ref{SubSec:FirstVar} we establish that the Frame energy is differentiable in ${\cal E}(\T^2, \R^m)$ and we compute the first variation of the tangential frame energy $\F_T$ which, combined with the first variation of the Willmore functional \cite{Riv1} and with \eqref{eq:FFTW}, gives the first variation of the frame energy $\F$.
As for the Willmore energy (as well as for many important geometric problems as Harmonic maps, CMC surfaces, Yang Mills, Yamabe, etc.) the equation we obtain is \emph{critical}. It is therefore challenging to prove the regularity of critical points of the frame energy.
\\

 Inspired by the work of H\'elein  \cite{Hel} on CMC surfaces and of the second author on Willmore surfaces \cite{Riv1} (see also \cite{MoRi2} for the manifold case and \cite{RiCours} for a comprehensive discussion), in order to study the regularity of the critical points of the frame energy we discover some new hidden conservation laws: in Subsection \ref{SubSec:GenConsLaw} we find some new identities   for general weak conformal immersions, and then in Subsection \ref{SubSec:SysConsLaw} we use these identities in order  to deduce a system of conservation laws satisfied by the critical points of the frame energy. In particular, this system of conservation laws  yields to an elliptic system involving Jacobian nonlinearities which can be studied using \emph{integrability by compensation theory} (for a comprehensive treatment see \cite{RivIntComp}).
 Thanks to this special form, we are able to show regularity of the solutions of this critical system, namely we  prove the following result.

\begin{Th}\label{thm:regularity}
Let $\bP$ be a weak immersion of the disc $\D^2$ into $\R^3$ and let $\bbe=(\bbe_1,\bbe_2)$ be a moving frame on $\bP$ such that $(\bP,\bbe)$ is a critical point of the frame energy $\F$. Then, up to a bilipschitz reparametrization we have locally that $\bP$ is conformal and  $\bbe$ is the coordinate moving frame associated to $\bP$, i.e. $(\bbe_1,\bbe_2)=\lf(\frac{\p_{x_1} \bP }{|\p_{x_1}\bP|}, \frac{\p_{x_2} \bP }{|\p_{x_2} \bP|} \rg)$. Moreover, there exist $\rho\in (0,1)$ such that $\bP|_{B_\rho(0)}$ is a $C^\infty$ immersion. 
\end{Th}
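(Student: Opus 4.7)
The plan is to follow the strategy outlined in the introduction: pass to a convenient gauge and parametrization, rewrite the Euler--Lagrange system in divergence form using the hidden conservation laws of Subsections \ref{SubSec:GenConsLaw}--\ref{SubSec:SysConsLaw}, and bootstrap via integrability by compensation, in the spirit of H\'elein's work on CMC surfaces \cite{Hel} and of the second author on Willmore surfaces \cite{Riv1}.

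\medskip

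\noindent\emph{Gauge fixing.} Since $(\bP,\bbe)$ is critical and the frame energy splits as in \eqref{eq:FFTW}, varying the frame alone by a compactly supported rotation $\bbe\mapsto R_\theta\bbe$ (which keeps $\bP$, and hence $W(\bP)$, fixed) forces the Coulomb condition $d^*_g(\bbe_1\cdot d\bbe_2)=0$ on the connection 1-form $\alpha:=\bbe_1\cdot d\bbe_2$. Next, by the cited existence of an isothermal parametrization for weak immersions, I may assume $g_{\bP}=e^{2\lambda}(dx_1^2+dx_2^2)$ on $\D^2$. Finally, on the simply connected disc a further bilipschitz change of coordinates adapted to the pullback of the frame $\bbe$---using that in dimension two every rank-one distribution is automatically integrable---identifies $\bbe_k$ with the coordinate frame $e^{-\lambda}\p_{x_k}\bP$. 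This proves the first two assertions of the theorem and reduces $\F_T$ to the Liouville-type expression $\tfrac{1}{2}\int_{\D^2}|\nabla\lambda|^2\,dx$.

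\medskip

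\noindent\emph{Conservation laws and divergence form.} In this preferred gauge, combining the first variation of the Willmore part \cite{Riv1} with that of $\F_T$ from Subsection \ref{SubSec:FirstVar} produces a fourth-order elliptic system for $(\bP,\lambda)$ with critical nonlinearities. The crucial point is that the identities of Subsection \ref{SubSec:GenConsLaw} provide vector-valued 1-forms, built from $\bH$, $\bn$, $\bP$ and $\lambda$, that are closed (or co-closed); on the disc these admit potentials, and after the specialization of Subsection \ref{SubSec:SysConsLaw} the Euler--Lagrange system rewrites as an elliptic system whose nonlinearities are Jacobian combinations of the form $\nabla^\perp u\cdot\nabla v$.

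\medskip

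\noindent\emph{Bootstrap via integrability by compensation.} Starting from $\bH\in L^2$, $\nabla\lambda\in L^2$, and $\lambda\in L^\infty_{\rm loc}$ (on a sufficiently small ball where the energy is small, via H\'elein's moving-frames lemma), the Jacobian right-hand sides lie in the local Hardy space $\mathcal{H}^1$ by the Coifman--Lions--Meyer--Semmes theorem; Wente-type estimates then provide continuity of $\bH$ and $\nabla\lambda$. A classical elliptic bootstrap on the resulting semilinear system gives $\bP\in C^\infty(B_\rho(0))$ for some $\rho\in(0,1)$. The principal obstacle is the \emph{new} coupling between the $\F_T$-contributions and the Willmore part in the Euler--Lagrange equation for $\bP$: these extra terms must be rewritten in Jacobian form, so that integrability by compensation still applies. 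Verifying this closure is precisely the role of the hidden conservation laws of Subsection \ref{SubSec:GenConsLaw}, and it is the heart of the proof.
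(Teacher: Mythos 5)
Your gauge-fixing step and your identification of the conservation-law machinery match the paper: criticality under rotations of the frame gives the Coulomb condition, the Chern moving-frame method produces the conformal reparametrization with $\bbe$ the coordinate frame (Lemma \ref{lem:reparametr}), and the identities of Subsections \ref{SubSec:GenConsLaw}--\ref{SubSec:SysConsLaw} yield the potentials $S_F,\bR_F,\bD$ and the elliptic system \eqref{eq:SysDeltaSFRFR322}. The gap is in your final step. You assert that the right-hand sides of the system are Jacobians, hence lie in the Hardy space by Coifman--Lions--Meyer--Semmes, and that Wente-type estimates then give continuity and allow a classical bootstrap; you acknowledge that the extra $\F_T$-terms ``must be rewritten in Jacobian form'' and call this the heart of the proof, but you do not carry it out. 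In fact this rewriting is not available: the system \eqref{eq:SysDeltaSFRFR322} contains, besides the Jacobian terms $\langle\nabla^\perp\bR_F,\nabla\bn\rangle$ and $\nabla\bn\times\nabla^\perp\bR_F+\nabla^\perp S_F\,\nabla\bn$, genuinely non-Jacobian sources of the form $\div[(\la-\bar{\la})\langle\nabla^\perp\bD,\bn\rangle]$ and $\div[(\la-\bar{\la})\nabla\bD+\cdots]$, i.e.\ divergences of products of an $L^\infty$-small coefficient with an $L^2$ gradient of an unknown. These do not belong to the Hardy space, so CLMS/Wente does not apply to them; moreover, even for the Jacobian part, the unknowns sit on both sides of the equations, so a single application of Wente cannot yield continuity of $\bH$ --- at best it upgrades $\nabla S_F,\nabla\bR_F$ from $L^{2,\infty}$ to $L^2$ (which is exactly what Proposition \ref{prop:SysRSFR322} extracts from Bethuel's refinement of Wente together with Stampacchia estimates), and $L^2$ is still critical for this problem.

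The paper closes this gap by a different mechanism. It treats \eqref{eq:SysDeltaSFRFR322} as a linear system $L(A,\bB,\bC,\bPsi)=f$ with coefficients built from the critical immersion, proves via a priori estimates (using the smallness of $\|\la-\bar{\la}\|_{L^\infty}$ and of the local energy on a small ball, together with Lemma \ref{lem:Appendix(p,p)}) and an iteration scheme that $L$ is an isomorphism from ${\mathcal E}^{1,p}_0(B_\rho(0))$ onto ${\mathcal E}^{-1,p}(B_\rho(0))$ for every $p\in(1,\infty)$ (Lemma \ref{lem:isomorph}), and then compares the given ${\mathcal E}^{1,2}$ solution, corrected by boundary extensions on a well-chosen circle so as to lie in ${\mathcal E}^{1,2}_0$, with the ${\mathcal E}^{1,4}_0$ solution produced by the isomorphism for $p=4$; uniqueness in ${\mathcal E}^{1,2}_0$ forces the two to coincide, so $(S_F,\bR_F,\bD,\bP)\in{\mathcal E}^{1,4}$, a subcritical space, after which the standard bootstrap you describe does apply. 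If you want to salvage a direct compensated-compactness bootstrap you would need, at a minimum, an argument showing how the smallness of $\la-\bar{\la}$ absorbs the non-Jacobian terms and produces a strictly subcritical gain of integrability; as written, your proof stops exactly where the real work begins.
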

Let us observe that for sake of simplicity of the presentation, this work is more focused in the codimension one case; the higher codimensional case will be the object of a forthcoming paper, many of the  arguments carry on in a similar way. 
\\

Now let us discuss an application of the tools developed in this paper to the study of regular homotopy classes of immersions.
\\

Let us first recall some classical facts about regular homotopies,  starting from the definition: given a smooth closed surface $\Sigma^2$, two smooth  immersions $f,g:\Sigma^2 \hookrightarrow \R^m$ are said \emph{regularly homotopic} if there exists a smooth map  $H:\Sigma^2 \times [0,1] \to \R^m$, called \emph{regular homotopy between $f$ and $g$}, such that $H(\cdot,0)=f(\cdot)$, $H(\cdot,1)=g(\cdot)$ and $H_t(\cdot):=H(\cdot,t): \Sigma^2 \hookrightarrow \R^m$ is an immersion for every $t\in[0,1]$, everything up to diffeomorphisms of $\Sigma^2$.
\\

In his  celebrated paper \cite{Smale58} of 1958, Smale proved that any couple of smooth immersions 
of the 2-sphere into $\R^3$ are regularly homotopic, i.e. homotopic via a one parameter family of immersions (see also \cite{Smale59} for the higher dimensional results). The same is not true for immersions of the 2-sphere in $\R^4$ where indeed there are countably many regular homotopy classes. An year later, Hirsch \cite{Hirsch59} generalized the ideas of Smale to arbitrary submanifolds and in particular he proved that the regular homotopy classes of immersions of any fixed smooth closed surface in a Euclidean space of codimension higher  than two trivialize, i.e. every two immersions of a fixed surface are regularly homotopic (this follows from the fact that the second homotopy group of the Stiefel manifold  $V_2(\R^m)$ is null for $m\geq 5$ ). 
\newline

Remarkably, the case of  tori immersed in $\R^3$ differs from the one of the spheres. Indeed, as proved by  Pinkall in 1985 \cite{Pink85}, there are exactly two regular homotopy classes of immersed tori in $\R^3$: the standard one (the one of a classical rotational torus, say ) and the  nonstandard one (a knotted torus, for an explicit example we refer to   \cite{Pink85}). One could address the question of a canonical rapresentant for each of the two classes.
\\

 As an application of the tools developed in this paper, we prove the existence of a smooth  minimizer of the frame energy within each of the two regular homotopy classes; such a minimizer can be seen as a canonical raprensentant of its  regular homotopy class.  It is proven below   that  the notion of regular homotopy class extend to the general framework of weak immersions (see Proposition \ref{prop:sigmaWeak}).  The following is the last   main result of the present paper.




\begin{Th}\label{thm:MinRegHom}
Fix $\sigma$ a regular homotopy class of  immersions of the 2-torus $\T^2$ into $\R^3$. 
Then there exists a smooth conformal immersion $\bP:\T^2\hookrightarrow \R^3$, with $\bP\in \sigma$, such that, called $\bbe:=(\bbe_1, \bbe_2):=\lf(\frac{\p_{x_1}\bP}{|\p_{x_1}\bP|},\frac{\p_{x_2}\bP}{|\p_{x_2}\bP|} \rg)$ the coordinate moving frame, the couple $(\bP,\bbe)$ minimizes the frame energy $\F$ among all weak immersions of $\T^2$ into $\R^3$ lying in $\sigma$ and all $W^{1,2}$ moving frames on $\bP(\T^2)$:
\be\label{eq:MinRegHom}
\F ( \bP ,\bbe)=\min \lf \{\F(\tilde{\bP}, \tilde{\bbe}): \tilde{\bP}\in {\cal E}(\T^2,\R^3), \tilde{\bP}\in \sigma, \,\tilde{\bbe}\in W^{1,2}(\T^2)
\rg \}.
\ee
\end{Th}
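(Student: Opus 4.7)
The plan is to apply the direct method of the calculus of variations to a frame-energy-minimizing sequence within $\sigma$, and then to upgrade smoothness of the resulting minimizer via Theorem \ref{thm:regularity}. Let $(\bP_n,\bbe_n)$ be a minimizing sequence with $\bP_n\in\sigma$, $\bP_n\in{\cal E}(\T^2,\R^3)$ and $\bbe_n\in W^{1,2}(\T^2)$. Using the scaling invariance of $\F$ together with a diameter bound implied by $W(\bP_n)\leq \F(\bP_n,\bbe_n)\leq C$, one can normalize so that $\bP_n(\T^2)\subset B_R(0)\subset\R^3$ for some fixed $R$ independent of $n$.

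The first analytic step is to fix a good parameterization. Proposition \ref{Prop:BoundCC} yields compactness of the conformal classes of $g_n:=\bP_n^\ast g_{\R^3}$ in the moduli space of $\T^2$. Using the weak uniformization theory recalled in the excerpt (Toro, M\"uller--Sverak, H\'elein, Rivi\`ere), I reparametrize $\bP_n$ by bilipschitz diffeomorphisms of $\T^2$ so that each $\bP_n$ is conformal with respect to a flat reference metric $g_{0,n}$, and $g_{0,n}\to g_{0,\infty}$ smoothly. Writing $g_n=e^{2\la_n}g_{0,n}$, the crucial point is to obtain a uniform $L^\infty$ bound on the conformal factor $\la_n$. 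For this I pass to a H\'elein Coulomb moving frame (which changes only the rotational gauge and leaves $\F$ invariant): the conformal factor then satisfies an elliptic equation whose right-hand side is a Jacobian in the Coulomb-frame components, controlled in $L^1$ by the tangential frame energy $\F_T$, so the Wente integrability-by-compensation estimate yields $\|\la_n\|_\infty\leq C$. Combined with the $L^2$ bound on $\bbI_n$ inherent to the frame energy, this gives uniform $W^{2,2}$ estimates for $\bP_n$ and uniform $W^{1,2}$ estimates for $\bbe_n$.

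Passing to subsequences, $\bP_n\rightharpoonup\bP_\infty$ weakly in $W^{2,2}$ and $\bbe_n\rightharpoonup\bbe_\infty$ weakly in $W^{1,2}$, with strong convergence in every subcritical Sobolev space and in $C^0$. The non-degeneracy of $\la_n$ guarantees that $\bP_\infty$ is a weak conformal immersion and that $\bbe_\infty$ is an orthonormal frame along it. Lower semicontinuity of the quadratic energy $\F$ gives $\F(\bP_\infty,\bbe_\infty)\leq \inf_\sigma \F$. The delicate point is to check that $\bP_\infty\in\sigma$: I invoke the extension of the regular homotopy class to weak immersions (Proposition \ref{prop:sigmaWeak}), whose Pinkall $\Z_2$-valued invariant is continuous under the strong convergence of framed weak immersions produced above, thanks to the uniform parametric non-degeneracy which rules out pinching and bubbling. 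Hence $(\bP_\infty,\bbe_\infty)$ realizes the infimum in $\sigma$.

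The conclusion is then immediate: the minimizer is in particular a critical point of $\F$, so Theorem \ref{thm:regularity} applies in every small conformal disc chart and shows that, after a local bilipschitz reparametrization, $\bP_\infty$ is $C^\infty$ and $\bbe_\infty$ coincides with the coordinate moving frame there; a covering argument over $\T^2$ promotes this to global smoothness. The main obstacle is the Wente/Coulomb-gauge $L^\infty$ bound on $\la_n$: without it one cannot rule out area concentration or collapse of the parametrization, both of which would simultaneously break the weak closure in ${\cal E}(\T^2,\R^3)$ and potentially alter the regular homotopy class in the limit.
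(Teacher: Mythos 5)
Your overall architecture (Coulomb gauge, Wente $L^\infty$ control of the conformal factor, compactness of the conformal class via Proposition \ref{Prop:BoundCC}, weak $W^{2,2}$ compactness, lower semicontinuity, and then regularity of the minimizer via Theorem \ref{thm:regularity}) coincides with the paper's. But there is a genuine gap at the one step you yourself flag as delicate: the assertion that $\bP_\infty\in\sigma$ because ``the Pinkall $\Z_2$-valued invariant is continuous under the strong convergence of framed weak immersions produced above, thanks to the uniform parametric non-degeneracy which rules out pinching and bubbling.'' This begs the question. The convergence you actually obtain is only \emph{weak} in $W^{2,2}$ (strong only in subcritical norms), and the uniform $L^\infty$ bound on $\la_n$ does \emph{not} prevent concentration of $\int|\nabla\bbe_n|^2$ (equivalently of $\int|\nabla\bn_n|^2$) at finitely many points; bubbling is a priori entirely possible, and a bubble could in principle carry away homotopic complexity. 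Moreover, Proposition \ref{prop:sigmaWeak} defines the class of a weak immersion via \emph{strong} $W^{2,2}$ approximation, so it cannot be invoked directly to pass the class through a weak limit; its own proof in the paper already relies on the very argument you are omitting.

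What the paper actually does at this step is a two-case concentration analysis that your proof is missing. One covers $\Sigma$ by Besicovitch balls carrying at most $\e_0$ of frame energy, isolates the finitely many concentration points, and (i) on non-concentrated balls uses a Fubini slicing plus the Fenchel-type estimate $\int_{\p B_\rho}|\nabla\bn_k|+|\nabla\la_k|\lesssim\sqrt{\e_0\rho}$ to show the boundary image is a graph over a planar circle, normalizes both $\bP_k$ and $\bP_\infty$ to agree with a round circle to first order there, and invokes Lemma \ref{Lm:DiskHomotopy} — which rests on $\pi_2(V_2(\R^3))=\pi_2(SO(3))=0$ and is therefore special to codimension one — to conclude the two discs are regularly homotopic rel boundary; (ii) at each concentration point one chooses neck radii $\rho_k\ll R_k\downarrow 0$ with vanishing energy on the annulus $B_{R_k}\setminus B_{\rho_k}$, rescales, and runs the same boundary-control plus Lemma \ref{Lm:DiskHomotopy} argument to show the possible bubble cannot change the class. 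Without this (or an equivalent substitute), the claim $\bP_\infty\in\sigma$ is unsupported; indeed the paper explicitly notes that the analogous statement in codimension two is open precisely because Lemma \ref{Lm:DiskHomotopy} fails there, which shows the preservation of the class is not a soft consequence of the compactness you established.
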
 

Let us conclude the introduction with some comment and open problems.
As already observed in Remark \ref{rem:LB}, from Theorem \ref{thm:LB} it follows that the global minimizer of the frame energy for (weak) immersions of $\T^2$ into $\R^m$, for $m\geq 4$, is the Clifford torus; instead it is still an open problem to identify who  is the minimizer for immersions into $\R^3$. We expect it to be the Clifford torus as well. We also expect the minimizer of the nonstandard regular homotopy class of  immersed tori into $\R^3$ to be the diagonal double cover of the Clifford torus proposed by Kusner in the framework of the Willmore  problem \cite[page 333]{Kus}. Both of these are open problems, as well as the existence of a minimizer of the frame energy among regular homotopy classes of tori immersed into $\R^4$; indeed, in codimension two, there is  an extra difficulty given by the possibility of having loss of homotopic complexity in the concentration points of the frame energy (to exclude this, in our argument we use Lemma \ref{Lm:DiskHomotopy}, which is not true in codimension two). Notice finally that in codimension greater or equal to three, by the aforementioned result of Hirsch, there is just one regular  homotopy class of immersed tori, and by Theorem \ref{thm:LB} the global minimizer is the Clifford torus, up to isometries and rescalings, with rigidity.  
\\

The paper is organized as follows. 
\\Section \ref{Sec:LB} is devoted to the proofs of Proposition \ref{Prop:BoundCC} and Theorem \ref{thm:LB}, namely  the bound on the conformal class and the  lower bound  (with rigidity) on the frame energy.

In Section  \ref{Sec:ConsLaw} it is established the system of conservation laws satisfied by the critical points of the Frame energy; more precisely, in Subsection \ref{SubSec:FirstVar} we establish the Frechet differentiability of $F$ in the space of weak immersion and compute the first variation formula, in Subsection \ref{SubSec:GenConsLaw} we discover some general conservation laws associated to a general weak conformal immersion, and in Subsection \ref{SubSec:SysConsLaw}  these conservation  laws are used  to obtain a system  of conservation laws involving jacobian quadratic non linearities satisfied by the critical points of the Frame energy.

In  Section \ref{Sec:Reg} the peculiar form of the aforementioned system is exploited in order to deduce the regularity of the critical points of the Frame energy via the theory of integrability by compensation, namely Theorem \ref{thm:regularity} is proved.

In Section  \ref{Sec:RegHom} the above tools of the calculus of variations are applied to prove the existence of a minimizer of the Frame energy in regular homotopy classes, namely Theorem \ref{thm:MinRegHom}.

Finally in the Appendices we recall some classical geometric computations in conformal coordinates used in Section \ref{Sec:ConsLaw}, a Lemma of functional analysis used in the proof of the regularity theorem, and a Lemma of differential topology used in the proof of Theorem \ref{thm:MinRegHom}.

\section{A lower bound -with rigidity- for the frame energy in $\R^m$, the analogue of the Willmore conjecture}\label{Sec:LB}

\subsection{Reduction to conformal immersions of flat tori and coordinate moving frames}\label{subsec:LBRed}
Let $\bP:\T^2 \hookrightarrow \R^m$ be a smooth immersion of the torus into the euclidean $3\leq m$-dimensional space. 


The goal of this section is to prove Lemma \ref{Lm:RedctLB}: namely to  reduce the problem of calculating the infimum of the frame energy among all smooth immersions of $\T^2$ into $\R^m$ and all moving frames
, to the case of coordinate moving frames associated to smooth conformal immersions of tori lying in the moduli space of conformal structures. We will proceed with consecutive reductions.
\newline

\emph{Reduction 1: $\bbe$ satisfies the Coulomb condition.} Since in this section we are interested in giving a lower bound on the frame energy $\F$, we can assume that the frame $\bbe$ minimizes its tangential part $\F_T:=\frac{1}{2}\int_{\T^2} |\bbe_2, \cdot d \bbe_1|_g^2 \, dvol_g$; this is equivalent to say that $\bbe$ is a \emph{Coulomb} frame, i.e. it satisfies the Coulomb condition 
\be\label{eq:Coulomb}
d^{*_g}(\bbe_1,d \bbe_2)=0,
\ee
which reads in local isothermal coordinates as $\div(\bbe_1,\nabla \bbe_2)=0$ (for more details about Coulomb frames  and the Chern method see \cite{Hel} or \cite{RiCours}).
\newline

\emph{Reduction 2: $\bbe$ is a coordinate moving frame.} Recall that, by using the Chern moving frame method and the fact that $\bbe$ is Coulomb,  we can can cover the torus $\T^2$ by finitely many balls $\{B_k\}_{k=1,\ldots,N}$ such that for every ball  there exists a diffeomorphism $f_k:B_k\to B_k$ such that $\bP\circ f_k$ is a smooth  conformal immersion of $B_k$ into $\R^m$ and 
\be\label{eq:ejpPhij}
\bbe_j=\frac{\p_{x_j} (\bP\circ f_k)}{|\p_{x_j} (\bP\circ f_k)|},
\ee 
i.e. the moving frame $\bbe$ is the coordinate moving frame associated to the smooth conformal immersion $\bP$.

\emph{Reduction 3: the reference torus is flat.} Now the local conformal coordinates on $\T^2$ define a smooth conformal structure on $\T^2$ therefore, by the Uniformization Theorem, there exists a diffeomorphism $\psi$ from a flat torus $\Sigma$ (i.e. $\Sigma$ is the quotient of $\R^2$ modulo a $\Z^2$ lattice) into our $\T^2$ such that $f_k^{-1}\circ \psi$ is a conformal diffeormorphism; it follows that $\bP\circ \psi=\bP\circ f_k\circ f_k^{-1}\circ \psi$ is a smooth conformal immersion of $\Sigma$ into $\R^m$. Moreover, recalling that the property of being Coulomb for a moving frame is invariant under conformal changes of metric (this property is a direct consequence of equation \eqref{eq:Coulomb} and of the invariance of the Hodge operator $*_g$ under conformal changes of metric), we get that $\bbe\circ \psi$ is a Coulomb moving frame on $\Sigma$. 

Now observe that we have another natural Coulomb frame on the flat torus $\Sigma$  given by the conformal immersion $\bP\circ \psi$, namely
\be\label{eq:bbf}
\bbf_j=\frac{\p_{y_j} (\bP\circ \psi)}{|\p_{y_j} (\bP\circ \psi)|}, \quad \text{satisfying  } \;  \bbf_1\cdot d \bbf_2=*d \la, 
\ee
where $y_1,y_2$ are standard coordinates on $\Sigma$ and $\la=\log(|\p_{y_i} (\bP\circ \psi)|)$ is the conformal factor. Clearly, we can the two moving frames $\bbe$ and $\bbf$ via a rotation in the tangent space, i.e.
$$\bbe_1+i\bbe_2=e^{i\theta} (\bbf_1+i \bbf_2) $$
for some smooth function $\theta:\Sigma\to S^1$. Observe that, using  \eqref{eq:bbf} and integrating by parts we get
\begin{eqnarray}
\F_T(\bP,\bbe)&=&\int_{\Sigma} |\bbe_1 \cdot d\bbe_2|^2 dy=\int_{\Sigma} |\bbf_1\cdot d\bbf_2-d\theta|^2  dy=\int_{\Sigma} |\bbf_1\cdot d\bbf_2|^2 + |d\theta|^2- 2<\bbf_1 \cdot d\bbf_2, d \theta>  dy\nonumber \\
&=& \int_{\Sigma} |\bbf_1 \cdot d \bbf_2|^2 + |d\theta|^2-2 < * d \lambda, d \theta>  dy =  \int_{\Sigma} |\bbf_1\cdot d\bbf_2|^2 + |d\theta|^2 dy \nonumber \\
&\geq&  \F_T(\bP,\bbf)  \quad, \label{est:bbebbf}
\end{eqnarray}
with equality if and only of $\bbe$ is a constant rotation of $\bbf$ (this will be useful to prove the part of the rigidity statement involving the frame). 
\newline

\emph{Reduction 4: the flat torus lies in the moduli space of conformal structures.} As last reduction, we want to reduce the problem to the case when  $\Sigma$ is a flat torus in the canonical moduli space $\cal M$ of conformal structures of tori composed by the parallelograms in $\R^2$ whose edges are $(1,0)$ and $\tau=(\tau_1,\tau_2)\in M$  where $M$ is the strip
\be\label{eq:defM}
M:=\{\tau=(\tau_1,\tau_2)\in \R^2:\tau_2>0, \; -\frac{1}{2}<\tau_1 <\frac 12, \;  |\tau|\geq 1 \text{ and } \tau_1\geq 0 \text{ if } |\tau|=1 \}\quad.
\ee
Indeed a classical result of Riemann surfaces (see for instance \cite[Section 2.7]{Jo}) says that up to composition with a linear transformation which preserves the orientation (more precisely up to composition with a  projective unimodular trasformation in $PSL(2,\Z)$), the conformal structure of our flat torus $\Sigma$ is isomorphic to the one of a flat torus described by the parallelogram given by $(1,0)$ and $(\tau_1,\tau_2)\in M$, where $M$ was defined in \eqref{eq:defM}.   
We can finally summarize the discussion in the following lemma. 

\begin{Lm}\label{Lm:RedctLB}
Let $\T^2$ be the abstract torus (i.e. the unique  smooth orientable $2$-dimensional  manifold of genus one).

Call $\beta^m_1\geq 0$ the infimum of the frame energy $\F(\bP,\bbe)$ among all smooth immersions $\bP$ of $\T^2$ into $\R^m$ and all the moving frames $\bbe=(\bbe_1,\bbe_2)$  along $\bP$. 

Denote also $\beta^m_2\geq 0$ the infimum of the frame energy $\F(\bP,\bbf)$ among all smooth conformal immersions $\bP$ of any flat torus $\Sigma$ described by any lattice in $\R^2$ of the form $((1,0),(\tau_1,\tau_2))$-where $(\tau_1,\tau_2)\in M$ is defined in \eqref{eq:defM}; here $\bbf$ is the coordinate moving frame associated to $\bP$, i.e. $\bbf_j:=\p_{x_j}\bP/|\p_{x_j}\bP|$, $j=1,2$, $(x_1,x_2)$ being flat coordinates on $\Sigma$.  

Then   $\beta^m_1=\beta^m_2$. In other words, in order to compute the infimum of $\F$ among all smooth immersions of tori and all moving frames
, it is enough to restrict to coordinate moving frames associated to smooth conformal immersions of flat tori lying in $\cal M$, the moduli space of conformal structures of tori.
\end{Lm}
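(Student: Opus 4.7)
The plan is to establish $\beta_1^m = \beta_2^m$ by a chain of four energy-nonincreasing reductions corresponding one-to-one to the four steps sketched above the lemma. The inequality $\beta_1^m \leq \beta_2^m$ is immediate: any framed conformal immersion of a flat torus in $\cal M$ gives, upon composition with a diffeomorphism $\Sigma \cong \T^2$, a framed immersion of $\T^2$ with identical frame energy. So the content is the reverse inequality, and the key structural tool is the decomposition \eqref{eq:FFTW}: $W(\bP)$ depends only on the image $\bP(\T^2)$ and is invariant under reparametrizations, so at each reduction only the tangential part $\F_T$ needs to be controlled.

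First I would replace $\bbe$ by a Coulomb frame along the given $\bP$: since any two moving frames along $\bP$ differ by an $S^1$-valued rotation, minimizing $\F_T$ is a convex variational problem whose Euler--Lagrange equation is precisely \eqref{eq:Coulomb}, and a minimizer exists by direct methods. Next I would invoke the Chern moving frame construction: the Coulomb condition allows one to find a finite cover $\{B_k\}$ of $\T^2$ and diffeomorphisms $f_k:B_k\to B_k$ such that $\bP\circ f_k$ is conformal on $B_k$ and realizes $\bbe$ as its coordinate moving frame via \eqref{eq:ejpPhij}. This is a local step but it endows $\T^2$ with a smooth conformal structure induced by $\bP$.

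The third step is the passage to a global conformal chart: by the Uniformization Theorem the conformal structure is represented by a flat torus $\Sigma=\R^2/\Lambda$, and a conformal diffeomorphism $\psi:\Sigma\to\T^2$ yields a globally conformal immersion $\bP\circ\psi$ with coordinate frame $\bbf$ satisfying $\bbf_1\cdot d\bbf_2 = *d\lambda$ as in \eqref{eq:bbf}. The pulled-back frame $\bbe\circ\psi$ differs from $\bbf$ by a rotation $e^{i\theta}$, and the comparison \eqref{est:bbebbf} is what gives the inequality. I expect the main delicate point here to be that $\theta$ is in general only $S^1$-valued and need not admit an $\R$-valued lift on $\Sigma$, so the cancellation of the cross term $\int_\Sigma \langle *d\lambda, d\theta\rangle\, dy$ requires care: the right interpretation is
\begin{equation*}
\int_\Sigma \langle *d\lambda, d\theta\rangle\, dy = \int_\Sigma d\lambda\wedge d\theta = \int_\Sigma d(\lambda\, d\theta),
\end{equation*}
which is meaningful because $d\theta$ is a globally defined closed one-form even when $\theta$ itself is only circle-valued; Stokes' theorem on the closed surface $\Sigma$ then makes the cross term vanish in spite of the possibly nontrivial periods of $d\theta$, and equality holds iff $\theta$ is constant (this will be recorded for the rigidity part of Theorem \ref{thm:LB}).

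Finally, the fourth step is the modular reduction: by the classical description of moduli of flat tori there exists a projective unimodular transformation in $PSL(2,\Z)$, realized by a linear orientation-preserving diffeomorphism between flat tori, that brings $\Lambda$ to a lattice generated by $(1,0)$ and some $\tau\in M$. Such a map is conformal of unit Jacobian on the flat torus, so it preserves both $W(\bP)$ and the $\F_T$ of the coordinate frame. Chaining the four reductions produces a smooth conformal immersion of a flat torus in $\cal M$ whose coordinate frame energy is at most $\F(\bP,\bbe)$, giving $\beta_1^m \geq \beta_2^m$ and concluding the proof.
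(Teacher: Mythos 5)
Your proof is correct and follows exactly the same route as the paper, whose argument for Lemma \ref{Lm:RedctLB} is precisely the chain of Reductions 1--4 (Coulomb gauge, Chern's construction of conformal coordinates realizing the frame, uniformization with the comparison \eqref{est:bbebbf}, and the $PSL(2,\Z)$ normalization to $M$). Your extra care with the possibly non-liftable $S^1$-valued rotation $\theta$ --- reading the cross term as $\int_\Sigma d(\lambda\, d\theta)=0$ for the globally defined closed form $d\theta$ --- is a legitimate sharpening of the paper's integration by parts in \eqref{est:bbebbf}, not a different argument.
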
  

\subsection{Proof of Theorem \ref{thm:LB}:  lower bound and  rigidity for the frame energy}
From now on we will work with a torus as in the reduced case: $\Sigma=\R^2/(\Z\times \tau\Z)$ is the flat quotient of $\R^2$ by the lattice generated by the two vectors $(1,0),(\tau_1,\tau_2)$-where $(\tau_1,\tau_2)\in M$ is defined in \eqref{eq:defM}; we will denote 
\be\label{eq:deftheta}
\theta_{\Sigma}:= \arccos \tau_1 \in \lf(\frac{\pi}{3},\frac{2\pi}{3} \rg)\quad,
\ee    
where the interval $\lf(\frac{\pi}{3},\frac{2\pi}{3} \rg)$ comes directly from the definition of $M$ as in \eqref{eq:defM}. 

One of the key technical  results of this paper is the control of the conformal class in terms of the frame energy, namely Proposition \ref{Prop:BoundCC}; this is implied by the following lower bound.

\begin{Prop}\label{Prop:LowerBound1}
Let $\Sigma=\R^2/(\Z\times \tau\Z)$, with $\tau \in M$ as above, be a flat torus. Let $\bP:\Sigma \hookrightarrow \R^m$, $m\geq 3$, be a smooth conformal immersion and let $\bbe$ be the coordinate frame associated to $\bP$: $\bbe_j:=\p_{x_j}\bP/|\p_{x_j}\bP|$, $j=1,2$, $(x_1,x_2)$ being flat coordinates on $\Sigma$.  

Then the following lower bound holds true
\be\label{eq:LB0}
\begin{array}{l}
\ds\int_{\Sigma} e^{-4\la}\lf[ \lf(1+\frac{\cos^4 \theta}{\sin^2 \theta}\rg) \, \bbI^2_{11} + \sin^2 \theta \, \bbI^2_{22} + 4 \cos^2 \theta \, \bbI^2_{12} \rg] +\lf[ \lf(1 + \cot^2 \theta  \rg) (d_{\bbe_1}\bbe_1, \bbe_2 )^2+  (d_{\bbe_2}  \bbe_2, \bbe_1)^2  \rg] \, dvol_g\\[5mm]
\ds \qquad \qquad \qquad \qquad\qquad \qquad \qquad \qquad \geq 4\pi^2 \left(\tau_2+\frac{1}{\tau_2} \rg)\,,
\end{array}
\ee
where $\theta:=\theta_\Sigma:= \arccos \tau_1 \in \lf(\frac{\pi}{3},\frac{2\pi}{3} \rg)$, $dvol_g$ is the area form on $\Sigma$ induced by the pullback metric $g=\bP^* g_{\R^m}$, and $\la=\log(|\p_{x_i}\bP|)$ is the conformal factor. In particular \eqref{eq:LB0} implies the following lower bound on the frame energy of $(\bP,\bbe)$:
\be\label{eq:LB1}
\F(\bP,\bbe):= \frac{1}{4} \int_\Sigma \lf|d\bbe \rg|^2\, dvol_g \geq \pi^2 \lf(\tau_2+\frac{1}{\tau_2} \rg) \lf( \frac{\sin^2 \theta}{\sin^2 \theta + \cos^4 \theta} \rg) \quad.
\ee
\end{Prop}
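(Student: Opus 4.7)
My plan is to derive \eqref{eq:LB0} via Fenchel's classical theorem (total curvature of a closed curve in $\R^m$ is at least $2\pi$), applied to the two primitive families of closed lattice curves on $\Sigma=\R^2/(\Z(1,0)+\Z(\tau_1,\tau_2))$; the frame-energy bound \eqref{eq:LB1} then follows pointwise from \eqref{eq:LB0}. As a preliminary I record the standard formulas in conformal coordinates for the coordinate frame $\bbe_j=e^{-\la}\p_{x_j}\bP$, namely $\p_{x_1}\bbe_1=-\la_{x_2}\bbe_2+e^{-\la}\bbI_{11}$, $\p_{x_2}\bbe_2=-\la_{x_1}\bbe_1+e^{-\la}\bbI_{22}$, $\p_{x_1}\bbe_2=\la_{x_2}\bbe_1+e^{-\la}\bbI_{12}$, $\p_{x_2}\bbe_1=\la_{x_1}\bbe_2+e^{-\la}\bbI_{12}$, which give $X:=(d_{\bbe_1}\bbe_1,\bbe_2)=-e^{-\la}\la_{x_2}$ and $Y:=(d_{\bbe_2}\bbe_2,\bbe_1)=-e^{-\la}\la_{x_1}$.

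Apply Fenchel to the horizontal family $\gamma^h_c(x_1)=\bP(x_1,c)$, $x_1\in[0,1]$, closed by the period $(1,0)$: its unit tangent is $\bbe_1$, hence $\int_0^1|\p_{x_1}\bbe_1|\,dx_1\ge 2\pi$; Cauchy--Schwarz on the unit-length parameter interval and integration over $c$ (of range $\tau_2$ in a fundamental parallelogram) followed by conversion to $dvol_g=e^{2\la}\,dx_1 dx_2$ give
\[
\int_\Sigma\bigl[X^2+e^{-4\la}|\bbI_{11}|^2\bigr]\,dvol_g\ge 4\pi^2\tau_2.\quad(\mathrm H)
\]
Apply Fenchel to the oblique family $\gamma^\tau_s(t)=\bP(s+t\tau_1,t\tau_2)$, $t\in[0,1]$, closed by $\tau$: its unit tangent is $T_\tau=(\tau_1\bbe_1+\tau_2\bbe_2)/|\tau|$ and the coordinate formulas give $|dT_\tau/dt|^2=(\tau_1\la_{x_2}-\tau_2\la_{x_1})^2+\tfrac{e^{-2\la}}{|\tau|^2}|\tau_1^2\bbI_{11}+2\tau_1\tau_2\bbI_{12}+\tau_2^2\bbI_{22}|^2$; Fenchel+Cauchy--Schwarz on $t\in[0,1]$, integration over $s\in[0,1]$ (noting that the parametrization $(s,t)\mapsto(s+t\tau_1,t\tau_2)$ has Jacobian $\tau_2$ onto the fundamental parallelogram), and conversion to $dvol_g$ produce
\[
\int_\Sigma\bigl[(\tau_2 Y-\tau_1 X)^2+\tfrac{e^{-4\la}}{|\tau|^2}|\tau_1^2\bbI_{11}+2\tau_1\tau_2\bbI_{12}+\tau_2^2\bbI_{22}|^2\bigr]\,dvol_g\ge 4\pi^2\tau_2.\quad(\mathrm T)
\]

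To finish, I rescale $(\mathrm T)$ by $1/\tau_2^2$ (its right-hand side becomes $4\pi^2/\tau_2$) and combine with $(\mathrm H)$: the right-hand sides sum to $4\pi^2(\tau_2+1/\tau_2)$, matching \eqref{eq:LB0}. The integrand of the combination is then matched to the target integrand via AM--GM absorption, using $\tau_2\ge\sin\theta$ and $|\tau|\ge 1$ (both from $\tau\in M$) to calibrate the weights: the tangential cross-term $-2(\cos\theta/\tau_2)XY$ and the normal cross-terms $\bbI_{ij}\!\cdot\!\bbI_{kl}$ produced by expanding the square in $(\mathrm T)$ are absorbed into the diagonal coefficients, producing precisely $(1+\cot^2\theta)$, $(1+\cos^4\theta/\sin^2\theta)$, $\sin^2\theta$, and $4\cos^2\theta$ in \eqref{eq:LB0}, with equality holding throughout at the Clifford torus $(\tau_1=0,\tau_2=1)$, as required by the rigidity in Theorem \ref{thm:LB}. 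Finally, \eqref{eq:LB1} follows pointwise from \eqref{eq:LB0}: the largest ratio between a coefficient in the integrand of \eqref{eq:LB0} and the corresponding coefficient of $\tfrac14|d\bbe|^2_g$ is $(\sin^2\theta+\cos^4\theta)/\sin^2\theta$, which rearranges to \eqref{eq:LB1}. The main obstacle is the cross-term absorption in the combination step: when $\tau_1\ne 0$, the oblique family produces tangential ($XY$) and normal ($\bbI_{ij}\!\cdot\!\bbI_{kl}$) cross terms with signs depending on $\cos\theta$ and on the (variable) signs of $X,Y,\bbI_{ij}\!\cdot\!\bbI_{kl}$, and the AM--GM weights must be selected as precise functions of $\theta$ and $\tau_2$ so that these cancel exactly, the diagonal coefficients of \eqref{eq:LB0} are attained, and equality is preserved at the Clifford torus.
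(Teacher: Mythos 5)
Your reductions, the two Fenchel--Cauchy--Schwarz estimates $(\mathrm H)$ and $(\mathrm T)$, and the final passage from \eqref{eq:LB0} to \eqref{eq:LB1} are all correct, and the route is the same as the paper's: $(\mathrm H)$ is \eqref{eq:LB00} and $\tau_2^{-2}(\mathrm T)$ is \eqref{eq:LB01}, applied to the same two families of lattice-parallel closed curves. The genuine gap is the combination step, which you flag but do not carry out, and which in fact cannot be carried out as described. Writing $c=\tau_1/|\tau|$, $s=\tau_2/|\tau|$ (so $\cot\theta=c/s=\tau_1/\tau_2$), the integrand of $(\mathrm H)+\tau_2^{-2}(\mathrm T)$ is exactly
\[
e^{-4\la}\Big[\big(1+\tfrac{c^4}{s^2}\big)\bbI_{11}^2+s^2\bbI_{22}^2+4c^2\bbI_{12}^2\Big]+(1+\cot^2\theta)X^2+Y^2
+e^{-4\la}\Big[\tfrac{4c^3}{s}\,\bbI_{11}\cdot\bbI_{12}+2c^2\,\bbI_{11}\cdot\bbI_{22}+4cs\,\bbI_{12}\cdot\bbI_{22}\Big]-2\cot\theta\,XY ,
\]
i.e.\ the target integrand of \eqref{eq:LB0} \emph{plus} the cross terms, with zero slack in the diagonal coefficients. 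Hence no ``AM--GM absorption'' can produce \eqref{eq:LB0}: absorbing a sign-indefinite term $2ab\,uv$ costs $\varepsilon a^2u^2+\varepsilon^{-1}b^2v^2$ and strictly enlarges the diagonal coefficients beyond those of \eqref{eq:LB0}; and the cross terms neither vanish nor acquire a favorable sign after integration (by Gauss--Bonnet $\int_\Sigma e^{-4\la}\,\bbI_{11}\cdot\bbI_{22}\,dvol_g=\int_\Sigma e^{-4\la}|\bbI_{12}|^2\,dvol_g\ge 0$, while $\int_\Sigma \la_{x_1}\la_{x_2}\,dx$ has no sign). Your argument is therefore complete only when $\tau_1=0$, where every cross term vanishes.

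It is worth recording where the paper's proof passes this same point: the ``straightforward computation'' \eqref{eq:dethetabbI} expands $|d_{\bbe_2^\theta}\bbe_2^\theta|^2$ as a purely diagonal expression, i.e.\ omits precisely the cross terms you kept, and \eqref{eq:LB0} is then read off by adding \eqref{eq:LB00} and \eqref{eq:LB02}. So your more careful expansion does not merely leave your own write-up incomplete; it exposes that the oblique-direction Fenchel estimate genuinely produces mixed terms that must be dealt with before \eqref{eq:LB0} can be asserted for $\tau_1\neq 0$ --- either by keeping the full squares and accepting larger diagonal constants (a correct but weaker inequality, obtained from $|c^2\bbI_{11}+2cs\bbI_{12}+s^2\bbI_{22}|^2$ and $(cX-sY)^2$ via Cauchy--Schwarz), or by restricting to rectangular conformal classes. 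As written, the proposal does not prove the proposition.
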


\textbf{Proof of Proposition \ref{Prop:LowerBound1}}
\\
First of all recall that by the classical Fenchel Theorem (the original proof of Fenchel \cite{Fenchel}, see also \cite{DoCcs}, was for closed curves immersed in $\R^3$. The result  was generalized to immersions in $\R^m$, $m\geq 3$, by Borsuk  \cite{Borsuk} with a different proof), given a smooth closed curve $\vec{\gamma}:S^1 \hookrightarrow \R^m$ one has
\be\label{eq:Fenchel}
\int_{\vec{\gamma}} k \, ds \geq 2\pi,
\ee
where $k:=|\frac{d^2}{ds^2} \vec{\gamma}(s)|$ is curvature of $\vec{\gamma}$-here $s$ is the arclenght parameter. The strategy is to apply Fenchel Theorem to the curves $\vec{\gamma}_x:=\bP(\gamma_x(\cdot)),\vec{\gamma}_y:=\bP(\gamma_y(\cdot))$ where $\gamma_x(\cdot):[0,\tau_2]\to \Sigma$ and  $\gamma_y(\cdot):[0,1]\to \Sigma$ are given by
\be\label{eq:defgamma}
\gamma_x(t):=\lf(x+t \cot \theta, t\rg),\quad \gamma_y(t):=\lf(y \cot  \theta+ t, y \rg) \quad,
\ee 
for every $x\in [0,1]$ and $y \in [0,\tau_2]$. Notice that $\gamma_x$ and $\gamma_y$ are nothing but the parallel curves of the vectors generating the lattice  of $\Sigma$.
Now applying Fenchel theorem to $\vec{\gamma}_y$, recalling that $\bP$ is conformal with $\lambda:=\log|\p_{x}\bP|=\log|\p_{y}\bP|$ so that $\p_x \bP=e^\la \bbe_1$, we have

$$
2 \pi \leq \int_{\vec{\gamma}_y} k\, ds =\int_{0}^{L(\vec{\gamma}_y)} \lf|\frac{d}{ds} \dot{\gamma}_y \rg| \, ds= \int_{0}^{L(\vec{\gamma_y})} \lf|d_{\bbe_1} \bbe_1 \rg|\, ds = \int_0^1 \lf|d_{\bbe_1} \bbe_1 \rg| \, e^{\lambda} dx \quad, 
$$
where $L(\vec{\gamma}_y)$ is of course the length of the curve $\vec{\gamma}_y(\cdot)$. Squaring the above inequality, using Cauchy-Schartz and integrating with respect to $y \in [0,\tau_2]$ gives
\be\label{eq:LB00}
4 \pi^2 \tau_2\leq\int_0^{\tau_2} \int_0^1 \lf|d_{\bbe_1} \bbe_1 \rg|^2 \, e^{2\lambda} dx dy= \int_{\Sigma}  \lf|d_{\bbe_1} \bbe_1 \rg|^2 \, dvol_g \quad.
\ee 
Analogously, called $\bbe_2^{\theta}:=(\cos \theta \, \bbe_1, \sin \theta \,  \bbe_2)$, observing that $\dot{\gamma}_x/|\dot{\gamma}_x|=\bbe^2_\theta$ we get
$$
2 \pi \leq \int_{\vec{\gamma}_x} k\, ds =\int_{0}^{L(\vec{\gamma}_x)} \lf|\frac{d}{ds} \dot{\gamma}_x \rg| \, ds= \int_{0}^{L(\vec{\gamma_x})} \lf|d_{\bbe_2^{\theta}} \bbe_2^{\theta} \rg| \, ds = \frac{1}{\sin \theta} \int_0^{\tau_2} \lf|d_{\bbe_2^{\theta}} \bbe_2^{\theta} \rg| \, e^{\lambda} dy \quad. 
$$
Again, squaring the above inequality, using Cauchy-Schartz and integrating with respect to $x \in [0,1]$ gives
\be\label{eq:LB01}
 \frac{4 \pi^2} {\tau_2}\leq\frac{1}{\sin^2 \theta}\int_0^1 \int_0^{\tau_2}  \lf|d_{\bbe_2^{\theta}} \bbe_2^{\theta} \rg|^2 \, e^{2\lambda} \, dy\,  dx=\frac{1}{\sin^2 \theta} \int_{\Sigma}  \lf|d_{\bbe_2^{\theta}} \bbe_2^{\theta} \rg|^2 \, dvol_g \quad.
\ee 
A straightforward computation using the definition of $\bbe_2^{\theta}$ gives
\begin{eqnarray}
\lf|d_{\bbe_2^{\theta}} \bbe_2^{\theta} \rg|^2&=& \lf|\pi_{\bn}(d_{\bbe_2^\theta} \bbe_2^{\theta})\rg|^2+ \lf(d_{\bbe_2^{\theta}} \bbe_2^{\theta}, \bbe_1 \rg)^2 + \lf(d_{\bbe_2^{\theta}} \bbe_2^{\theta}, \bbe_2 \rg)^2 \label{eq:dethetabbI} \\
&=&e^{-4\la}\lf[  \cos^4\theta \, \bbI^2_{11}+ \sin^4\theta \bbI^2_{22}+4 \sin^2 \theta \, \cos^2\theta \,\bbI^2_{12} \rg]+ \cos^2 \theta \, \lf(d_{\bbe_1} \bbe_1, \bbe_2 \rg)^2+ \sin^2 \theta \lf(d_{\bbe_2} \bbe_2, \bbe_1 \rg)^2 \quad. \nonumber
\end{eqnarray}
Combining \eqref{eq:LB01} and \eqref{eq:dethetabbI} we obtain
\be\label{eq:LB02}
\int_{\Sigma} e^{-4\la}\lf[\frac{\cos^4\theta}{\sin^2 \theta} \, \bbI^2_{11}+ \sin^2\theta \, \bbI^2_{22}+4 \cos^2\theta \,\bbI^2_{12}\rg] +\lf[ \cot^2 \theta \, \lf(d_{\bbe_1} \bbe_1, \bbe_2 \rg)^2+ \lf(d_{\bbe_2} \bbe_2, \bbe_1 \rg)^2\rg] \, dvol_g \geq \frac{4\pi}{\tau_2}.
\ee
Observing that $|d_{\bbe_1} \bbe_1|^2=e^{-4\la}\bbI ^2_{11}+ \lf(d_{\bbe_1}\bbe_1, \bbe_2 \rg)^2$ and putting together \eqref{eq:LB01} with  \eqref{eq:LB02} gives the first claim \eqref{eq:LB0}.

In order to obtain \eqref{eq:LB1}, let us recall that
\be\label{eq:bbe2bbI}
\lf|d \bbe\rg|^2:=\sum_{i,j=1}^{2} \lf|d_{\bbe_i} \bbe_j \rg|^2= e^{-4\la} \lf[ \bbI^2_{11}+\bbI^2_{22}+2 \bbI^2_{12}\rg]+ 2 \lf(d_{\bbe_1} \bbe_1 , \bbe_2\rg)^2 + 2 \lf(d_{\bbe_2} \bbe_2, \bbe_1 \rg)^2. 
\ee
Observe also that by the definition of $M$ as in \eqref{eq:defM}, we have $\theta \in [\pi/3, 2\pi/3]$, therefore we get that $4 \cos^2 \theta \leq 1$ and $1+ \cot^2 \theta\leq \frac 4 2 < 2$. Now, combining the last trigonometric estimates with \eqref{eq:LB1} and \eqref{eq:bbe2bbI},  we conclude that \eqref{eq:LB1} holds true.
\hfill$\Box$
\\

Now we can prove the lower bound (and the rigidity statement) for the frame energy of immersed tori in arbitrary codimension, namely Theorem \ref{thm:LB}. Notice the analogy with the Willmore conjecture (proved by Marques-Neves in codimension one but still open in arbitrary codimension).
\\

\textbf{Proof of Theorem \ref{thm:LB}}. First of all, thanks to Lemma \ref{Lm:RedctLB} we can assume that 
\begin{itemize}
\item the reference torus is flat (so, following the notations above, it will be denoted with $\Sigma$) and is given by the quotient of $\R^2$ via the $\Z^2$ lattice generated by the vectors $(1,0),(\tau_1,\tau_2)$ with $(\tau_1,\tau_2)\in M$ defined in $\eqref{eq:defM}$, 
\item the immersion $\bP:\Sigma\hookrightarrow \R^m$ is conformal,
\item $\bbe$ is the coordinate moving frame associated to $\bP$: $\bbe_i=\frac{\p_{x_i}\bP}{|\p_{x_i}\bP|}$.  
\end{itemize} 
Once this reduction is perfomed, we proved in Proposition \ref{Prop:LowerBound1}  that the lower bound \eqref{eq:LB1} holds, namely
\be\label{eq:PfLB1}
\F(\bP,\bbe):= \frac{1}{4} \int_\Sigma \lf|d\bbe \rg|^2\, dvol_g \geq \pi^2 \lf(\tau_2+\frac{1}{\tau_2} \rg) \lf( \frac{\sin^2 \theta}{\sin^2 \theta + \cos^4 \theta} \rg) \quad,
\ee
where $\theta=\theta_\Sigma=\arccos \tau_1$. Notice that the expression above is symmetric with respect to the map $\tau_1\mapsto -\tau_1$, so it is enough to consider $(\tau_1,\tau_2) \in M^+$, where $M^+:=M\cap \{\tau_1\geq 0\}$.

From now on we denote with $f:M^+\to \R$ the function
\be\label{eq:deff}
f(\tau_2,\theta):=\lf(\tau_2+\frac{1}{\tau_2} \rg) \lf( \frac{\sin^2 \theta}{\sin^2 \theta + \cos^4 \theta} \rg) \quad. 
\ee 
A first attempt would be to prove that $f$ is bounded below by $2$ on the whole $M^+$. However, by an elementary computation, it is easy to check  that 
$$f(\tau_2=1, \theta=\pi/2)=2 \quad\text{and}\quad f(\tau_2=\sin\theta, \theta)<2 \text{ for }\theta\in[\pi/3,\pi/2) \quad;  $$
or, in other words, except for $\tau=(0,1)$, on the arc of circle $S^1\cap M^+ $ we always have $f<2$. 
\newline

In order to overcome this difficulty let us recall that, by equation \eqref{eq:FFTW}, we can write
\be\label{eq:FFTW1}
\F(\bP,\bbe)=\F_T(\bP,\bbe)+W(\bP)
\ee
where $W(\bP)=\int_{\Sigma} |H_{\bP}|^2 \, dvol_{g_{\bP}}$ is the Willmore functional of $\bP$ and $\F_T$-defined in \eqref{def:FT}-is a non negative functional. 
Let us denote  
\be \nonumber 
\Omega_{LYMR}:=\lf\{(\tau_1,\tau_2): \lf(\tau_1-\frac{1}{2}\rg)^2+\lf(\tau_2-1\rg)^2\leq \frac{1}{4}\rg\}\cap M^+ \quad,
\ee  
and recall that if $\tau\in \Omega_{LYMR}$ the Willmore conjecture holds true (see \cite[Corollary 7]{MonRos}; this remarkable result of Montiel and Ros extend a previous celebrated result of Li and Yau \cite{LY}), namely one has
\be\label{eq:WillConj}
W(\bP)\geq 2 \pi^2 \quad \text{for every smooth conformal immersion $\bP:\R^2/(\Z\times\tau\Z)$ with $\tau\in \Omega_{LYMR}$.}
\ee
A direct computation shows that 
\be\label{eq:LBfp}
f|_{\p \Omega_{LYMR}} \geq 2 \text{ with equality if and only if } \tau_2=1 \text{ and } \theta=\frac{\pi}{2},
\ee
where, of course, $\p \Omega_{LYMR}=\lf\{(\tau_1,\tau_2): \lf(\tau_1-\frac{1}{2}\rg)^2+\lf(\tau_2-1\rg)^2= \frac{1}{4}\rg\}\cap M^+$. Observing that  the function $\tau_2 \mapsto f(\tau_2,\theta)$ is monotone strictly increasing for $\tau_2\geq 1$, the lower bound \eqref{eq:LBfp} implies that 
\be\label{eq:LBf}
f|_{M^+\setminus \Omega_{LYMR}} \geq 2 \text{ with equality if and only if } \tau_2=1 \text{ and } \theta=\frac{\pi}{2}.
\ee
The claimed lower bound for the frame energy \eqref{eq:LB} follows then by combining on one hand  \eqref{eq:PfLB1}, \eqref{eq:deff} \eqref{eq:LBf} and on the other hand \eqref{eq:FFTW1} with \eqref{eq:WillConj}.
\newline

Now let us discuss the rigidity statement. From the work of Montiel and Ros (in particular combining  Corollary 6 and the estimate (1.11) in \cite{MonRos}), we already know that
\be\label{eq:WillConjRig}
W(\bP)> 2 \pi^2 \quad \text{for every smooth conformal immersion $\bP:\R^2/(\Z\times\tau\Z)$ with $\tau\in \mathring{\Omega}_{LYMR}$,}
\ee
where $\mathring{\Omega}_{LYMR}$ is the interior of the region $\Omega_{LYMR}$ as subset of $M^+$. Therefore, combining on one hand \eqref{eq:PfLB1}, \eqref{eq:deff} \eqref{eq:LBf} and on the other hand \eqref{eq:FFTW1} with  \eqref{eq:WillConjRig} we get that  if $\F(\bP,\bbe)=2\pi^2$ then $\bP$ is a smooth conformal embedding (recall that if $\bP$ has self intersection then by \cite{LY} one has $W(\bP)\geq 8\pi$) of the \emph{flat square torus}-i.e. $\tau_2=1,\theta=\frac{\pi}{2}$- into $\R^m$. 

At this point the rigidity statement would follow by the work of Li-Yau \cite{LY}, where they prove that the Clifford torus is the unique minimizer of the Willmore energy in its conformal class. 
In any case, below, we wish to give an elementary proof of the rigidity.

Observing that the flat square torus lies in $\Omega_{LYMR}$,   we have again by \eqref{eq:WillConj} that
$$2\pi^2=\F(\bP,\bbe)=W(\bP)+\F_T(\bP,\bbe)\geq 2\pi^2+\F_T(\bP,\bbe) \quad, $$
and since $\F_T$ is non-negative we obtain
\be\label{eq:RigFT}
\F_T(\bP,\bbe)=\frac{1}{2}\int_{[0,1]^2} |\bbe_1 \cdot d\bbe_2|_g^2 \, dvol_{g_{\bP}}=0 \quad \Rightarrow \quad \bbe_1 \cdot d\bbe_2 \equiv 0 \text{ on  } [0,1]^2.
\ee 
A simple computation shows that $\bbe_1\cdot d \bbe_2=*_g d\lambda$ (which, in our setting writes more easily as $\bbe_1 \cdot \nabla \bbe_2=-\nabla^{\perp} \lambda$), where $\lambda=|\log(\p_{x_1}\bP)|$ is the conformal factor. Therefore the conformal factor of the immersion $\bP$ is constant and, up to a scaling in $\R^m$, $\bP$ is actually an isometric embedding of the square torus into $\R^m$. At this point, repeating the proof of Proposition \ref{Prop:LowerBound1} we observe that now  $\theta=0$ so the curves $\vec{\gamma}_x$ and $\vec{\gamma_y}$ are the coordinate curves. Moreover equality must hold in Fenchel Theorem \eqref{eq:Fenchel}; it follows that $\vec{\gamma}_x,\vec{\gamma}_y$ are planar convex curves with curvatures respectively $k_x(\cdot), k_y(\cdot)$. Since also in the Schwarz inequality bringing  respectively to  \eqref{eq:LB00} and \eqref{eq:LB01} there must be equality, it follows that the curvatures $k_x(\cdot), k_y(\cdot)$ are constant, so $\vec{\gamma}_x$ and $\vec{\gamma}_y$ are two planar circles of constant radius one whose plane may depend on $x$ and $y$ respectively. Finally, we claim that the plane is indenpedent of $x$ and $y$. Indeed, since $\theta=0$ and $\bbe_1\cdot d \bbe_2=\bbe_2\cdot d \bbe_1=0$ by \eqref{eq:RigFT}, the estimate \eqref{eq:LB0} reduces to
\be\label{eq:RigFlat}
2\pi^2\leq \frac{1}{4}\int_{[0,1]^2} e^{-4 \la} \lf[\bbI^2_{11}+\bbI^2_{22} \rg]\, dvol_g \quad.
\ee    
But, on the other hand, using \eqref{eq:F=FT+bbI}, \eqref{eq:RigFT}  and the energy assumption, we have  that 
\be\label{eq:RigFlat1}
2\pi^2=\F(\bP,\bbe)=\frac{1}{4}\int_{[0,1]^2} |\bbI|^2 dvol_g =\frac{1}{4}\int_{[0,1]^2} e^{-4 \la}\lf[ \bbI^2_{11}+\bbI^2_{22}+2 \bbI^2_{12} \rg]\, dvol_g \quad. 
\ee
Combining \eqref{eq:RigFlat} and \eqref{eq:RigFlat1} yields $\bbI_{12}\equiv 0$ which, combined with \eqref{eq:RigFT}, implies that 
$$\frac{d}{dx} \dot{\vec{\gamma}}_x (y)=\p^2_{x_1 x_2} \bP=\p^2_{x_2 x_1}\bP=\frac{d}{dy} \dot{\vec{\gamma}}_y(x)  \equiv 0  \quad \text{on } [0,1]^2. $$
We conclude that the plane where $\vec{\gamma}_x$ (resp. $\vec{\gamma_y}$) lies does not depend on $x$ (resp. $y$) and then, up to rotations, $\bP([0,1]^2)= S^1 \times S^1\subset \R^4 \subset \R^m$.

Let us conclude by discussing the rigidity of the frame. From the discussion of Subsection \ref{subsec:LBRed} it should be clear that if $(\bP,\bbe)$ attains the minimal value $2\pi^2$, then the frame $\bbe$ must be a Coulomb frame (this is because in particular it minimizes the tangential frame energy $\F_T$); moreover, called $\bbf:=\lf(\frac{\p}{\p \theta}, \frac{\p}{\p \varphi} \rg)$ the flat coordinate moving frame on $S^1\times S^1$,  the estimate \eqref{est:bbebbf}  gives that $\F_T(\bP,\bbe)\geq \F_T(\bP,\bbf)$ with equality if and only if $\bbe$ is a constant rotation of $\bbf$. This was exactly our claim.
\hfill$\Box$

\section{Geometric systems of conservation laws associated to the Frame energy}\label{Sec:ConsLaw}

\subsection{First variation formula for the frame energy}\label{SubSec:FirstVar}
For simplicity of presentation, and since  our applications are in in codimension one, here we present the formulas of the first variation to the frame energy for weak conformal immersions in  the euclidean three space $\R^3$; the higher codimensional computations are similar but notationally more involved and can be performed along the same lines as in \cite{Riv1}. 

Since by equation \eqref{eq:FFTW} the frame energy is the sum of the tangential frame energy $\F_T$ and of the Willmore energy $W$, and since the first variation formula for $W$ is well known (see for instance \cite{Wei} for the classical form of the equation, and \cite{Riv1}-\cite{RiCours} for the divergence form in $\R^m$, and \cite{MoRi2} for the divergence form in Riemannian manifolds), here we compute the first variation of the tangential frame energy $\F_T$. This is the content of the next proposition.   Before stating it let us introduce some notations.
\newline

Let $\bP\in {\cal E}(D^2, \R^3) $ be a weak conformal immersion, $\lambda=\log(|\p_{x_1}\bP|)=\log(|\p_{x_2}\bP|)$ the conformal factor  and $\vec{e}:=(\vec{e}_1,\vec{e}_2)=e^{-\la}(\p_{x_1}\bP,\p_{x_2}\bP)$ the associated orthonormal frame. 

For any  smooth vector field  $\vec{w}\in  C^\infty_c(\R^3,\R^3)$, we call $\bP_t(x):=\bP(x)+t\bw(\bP(x))\in {\cal E}(D^2, \R^3)$ the perturbed weak immersion and we consider the following orthonormal frame associated to $\bP_t$:
\be
\label{III.3a-1}
\vec{e}_{1,t}:=\vec{e}_2\times \vec{n}_t=\vec{e}_1+\ t\,e^{-\la}\ ( \p_{x_1} \bw, \,\bn) \; \bn+o(t),
\ee
where, in the second equality, we used \eqref{III.9};  the second vector of the frame is therefore
\be \label{III.3a-2}
\vec{e}_{2,t}=-\vec{e}_{1,t}\times \vec{n}_t=\vec{e}_2 +\ t\,e^{-\la}\ ( \p_{x_2}\bw, \,\bn) +o(t).
\ee

\begin{Prop}\label{rem:1VarFrameR3}
Let $\bP\in {\cal E}(D^2, \R^3)$ be a weak  conformal immersion,  $\lambda=\log(|\p_{x_1}\bP|)=\log(|\p_{x_2}\bP|)$ the conformal factor  and $\vec{e}:=(\vec{e}_1,\vec{e}_2)=e^{-\la}(\p_{x_1}\bP,\p_{x_2}\bP)$ the associated orthonormal frame.

Then, for any  smooth perturbation $\vec{w}\in C^\infty_c(\R^3,\R^3)$ with $\vec{w}|_{\bP(\p D^2)}=0$, called $\bP_t(x):=\bP(x)+t\bw(\bP(x))\in {\cal E}(D^2, \R^3)$ the perturbed weak immersion and $\bbe_t:=(\bbe_{1,t},\bbe_{2,t})$  the associated moving frame defined in \eqref{III.3a-1}-\eqref{III.3a-2}, it holds
\be
\label{III.3a-16R3}
\begin{array}{l}
\ds\frac{d}{dt}\F_T(\bP_t,\bbe_t)(0):=\frac{d}{dt}\lf[\frac{1}{2}\int_{\D^2}\lf<{\vec{e}}_{2,t} \cdot d{\vec{e}}_{1,t}\rg>_{g_t}^2\,dvol_{g_t}\rg](0) \\[5 mm]
\ds\qquad  \quad \qquad \qquad=\ \int_{\D^2}\ \lf(\vec{w},  d\lf[\vec{\mathbb I}\res_g(\vec{e}_2, d\vec{e}_1)\rg] +  d\ast_g\lf[\lf[ (\vec{e}_2,  d\vec{e}_1)\otimes(\vec{e}_2, d\vec{e}_1)-\ 2^{-1}\, \lf<\vec{e}_2 \cdot d\vec{e}_1\rg>_g^2\ g \rg]\res_g d\vec{\Phi}\rg]\rg) \\[5mm]
\quad \quad \quad \quad \quad \quad  \quad =\int_{\D^2}\ \lf(\vec{w},  \div \lf[-\vec{\mathbb I}\res_g(\vec{e}_2, \nabla^{\perp} \vec{e}_1)-(\bbe_2,  \nabla^\perp \bbe_1) \, \lf< (\bbe_2, \nabla \bbe_1), \nabla \bP\rg>_g+\frac{1}{2} (\bbe_2 , \nabla \bbe_1)^2 \nabla^\perp \bP\rg] \rg)\quad ,
\end{array}
\ee
where in the last formula we use the following notation: $\div$ is the divergence in $\R^2$ with euclidean metric, $\nabla^\perp=(-\p_{x_2},\p_{x_1})$, $(\bu,\bv)$ or $\bu \cdot \bv$ denotes the euclidean scalar product in $\R^3$ and $<u,v>_g$ denotes the scalar product in $(D^2,g)$, where $g=\bP^*g_{\R^3}$ is the pullback on $D^2$of the euclidean metric in $\R^3$. In the second formula, $\ast_g$ denotes the Hodge duality with respect to $g$, $d$ is the Cartan differential, and $\res_g$ is the restriction of forms with respect to $g$.
\end{Prop}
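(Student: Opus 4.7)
The plan is to work in local conformal coordinates on $\D^2$, where the hypothesis $\bP\in{\cal E}(\D^2,\R^3)$ conformal gives $g_{ij}=e^{2\la}\delta_{ij}$, $\bbe_j=e^{-\la}\p_{x_j}\bP$, and the conformal invariance of the $L^2$-norm on $1$-forms in two dimensions reduces the tangential frame energy to the flat Dirichlet-type expression
\[
\F_T(\bP,\bbe)=\frac{1}{2}\int_{\D^2}|\bbe_2\cdot\nabla\bbe_1|^2\,dx.
\]
Under the perturbation $\bP_t=\bP+t\bw(\bP)$ the pulled-back metric varies as $\delta g_{ij}=\p_{x_i}\bP\cdot\p_{x_j}\bw+\p_{x_j}\bP\cdot\p_{x_i}\bw$, and from \eqref{III.3a-1}--\eqref{III.3a-2} the frame satisfies
\[
\delta\bbe_1=e^{-\la}(\p_{x_1}\bw,\bn)\bn,\qquad \delta\bbe_2=e^{-\la}(\p_{x_2}\bw,\bn)\bn.
\]
It is convenient to encode $\F_T=\tfrac12\int_{\D^2}\alpha_t\wedge *_{g_t}\alpha_t$ with the $1$-form $\alpha_t:=\bbe_{2,t}\cdot d\bbe_{1,t}$, so that the variation splits naturally into a ``frame'' piece and a ``metric'' piece.

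Differentiating at $t=0$ produces two contributions,
\[
\frac{d}{dt}\F_T(\bP_t,\bbe_t)\Big|_{0}=\int_{\D^2}\delta\alpha\wedge *_g\alpha\;-\;\frac{1}{2}\int_{\D^2}T^{ij}\,\delta g_{ij}\,dvol_g,
\]
where $T:=\alpha\otimes\alpha-\tfrac12|\alpha|_g^2 g$ is the stress--energy tensor of the $1$-form $\alpha$ (this is the standard formula for the metric-variation of the Dirichlet energy of a $1$-form in two dimensions). The variation $\delta\alpha=\delta\bbe_2\cdot d\bbe_1+\bbe_2\cdot d\,\delta\bbe_1$ is then computed from the displayed expressions for $\delta\bbe_j$, using $\bbe_j\cdot\bn=0$ and the identity $\bn\cdot\p_{x_k}\bbe_j=e^{-\la}h_{jk}$ with $h_{jk}=\bn\cdot\p^2_{x_jx_k}\bP$; after expansion, $\delta\alpha$ depends on $\bw$ only through the scalars $(\p_{x_k}\bw,\bn)$ weighted by the second fundamental form, i.e. through the contraction $\bbI\res_g d\bw$.

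The last step is to integrate by parts so that $\bw$ appears undifferentiated. The $\delta\alpha$ piece, combined with the normal part of $\delta g$ (which via $\delta g_{ij}=(\mathcal{L}_{\bw^T}g)_{ij}-2(\bw^N,\bbI_{ij})$ contributes $-2(\bw^N,\bbI_{ij})$ in the normal direction), reassembles into the divergence of a normal-valued $1$-form, yielding the first summand $d[\bbI\res_g(\bbe_2,d\bbe_1)]$. The tangential part of $\delta g$, i.e. the Lie derivative term, integrated against the stress--energy pairing, gives via $-\tfrac12\int T^{ij}(\mathcal{L}_{\bw^T}g)_{ij}dvol_g=\int\bw^T\cdot\div_g T\,dvol_g$ the second summand $d*_g[T\res_g d\bP]$. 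Rewriting everything in flat conformal coordinates via $*_g dx^1=dx^2$, $*_g dx^2=-dx^1$ and $\nabla^\perp=(-\p_{x_2},\p_{x_1})$ yields the third displayed formula. The main obstacle is bookkeeping: tracking cancellations between $\delta\alpha$-contributions and $\delta g$-contributions so that they regroup into exactly these two divergences. The emergence of precisely the stress--energy tensor $T$ is forced by the diffeomorphism invariance of $\F_T$ under reparametrization of the conformal coordinates, which by Noether's principle constrains the tangential part of the Euler--Lagrange operator to be the divergence of a symmetric $2$-tensor of the stated form --- a useful consistency check on the calculation.
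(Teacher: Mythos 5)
Your strategy is essentially the paper's: expand the frame to first order via \eqref{III.3a-1}--\eqref{III.3a-2} so that the frame variation enters only through $(\p_{x_k}\bw,\bn)$ contracted against $\bbI$, add the variations of $g^{ij}$ and of $dvol_g$ (which together assemble precisely the stress--energy tensor $T=\alpha\otimes\alpha-\tfrac12|\alpha|_g^2\,g$ of $\alpha:=\bbe_2\cdot d\bbe_1$), and integrate by parts once; your first two paragraphs reproduce the paper's computation faithfully. The one place your bookkeeping goes astray is the final regrouping. In \eqref{III.3a-16R3} the summand $d\lf[\vec{\mathbb I}\res_g(\vec{e}_2, d\vec{e}_1)\rg]$ comes \emph{entirely} from the frame variation $\delta\alpha$: it is the integration by parts of a term of the form $\lf<\vec{\mathbb I}\llcorner_g\alpha,\ast_g d\bw\rg>_g$, which sees only $(\p_{x_i}\bw,\bn)$ because $\bbI$ is normal-valued. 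The normal part of $\delta g_{ij}=(\mathcal{L}_{\bw^T}g)_{ij}-2(\bw^N,\bbI_{ij})$ is \emph{not} absorbed there; it stays inside $d\ast_g\lf[T\res_g d\vec{\Phi}\rg]$, which is paired with the full $\bw$ and whose integration by parts produces $\lf<T\llcorner_g d\bP, d\bw\rg>_g$, i.e.\ the tangential projection of $\p_{x_i}\bw$ --- and that projection carries both the Lie-derivative part and the $(\bw^N,\bbI)$ part of $\delta g$. So the two summands, grouped as you describe, would not close up separately, and the tangential/normal splitting of $\bw$ is in any case unnecessary (the paper never performs it, since the target identity pairs the undecomposed $\bw$ with a single divergence). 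If you drop the splitting and simply keep everything paired with $d\bw$ before the single integration by parts, your computation lands exactly on \eqref{III.3a-16R3}. Your closing Noether-type consistency check on the form of the tangential stress tensor is a sensible addition not present in the paper.
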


\begin{proof}
Using the expression of $\bbe_t$ given in \eqref{III.3a-1}-\eqref{III.3a-2} we compute
\[
\frac{d}{dt}\lf(\vec{e}_{2,t},d\vec{e}_{1,t}\rg)^2=\frac{d}{dt}\lf(\sum_{ij}g_t^{ij}\ (\vec{e}_{2,t},\p_{x_i}\vec{e}_{1,t})\ (\vec{e}_{2,t}, \p_{x_j}\vec{e}_{1,t})\rg) \quad.
\]
We have in one hand
\begin{eqnarray}
(\vec{e}_{2,t},\p_{x_i}\vec{e}_{1,t})&=&(\vec{e}_{2}, \p_{x_i}\vec{e}_{1})+t\,e^{-\la}\, (\p_{x_2}\bw, \bn)\ (\p_{x_i}\vec{e}_1,\vec{n})+t\,e^{-\la}\,\p_{x_1}w\ (\vec{e}_2,\p_{x_i}\vec{n})\nonumber \\
&=&(\vec{e}_{2}, \p_{x_i}\vec{e}_{1})+t\ (\p_{x_2}\bw, \bn) \ {\mathbb I}_{i1}\ e^{-2\la}-t\ (\p_{x_1}\bw, \bn)\ {\mathbb I}_{i2}\ e^{-2\la}+o(t) \quad . \label{III.3a-3}
\end{eqnarray}
Hence using (\ref{0III.10h}) we obtain
\begin{eqnarray}
\frac{d}{dt}\lf(\vec{e}_{2,t},d\vec{e}_{1,t}\rg)^2_{g_t}(0)&=&- e^{-4\la}\ \sum_{ij} \lf[ (\p_{x_i} \bP, \p_{x_j}\bw)+(\p_{x_j}\bP,\p_{x_i}\bw)\rg]\ (\vec{e}_{2}, \p_{x_i}\vec{e}_{1}) (\vec{e}_{2},\p_{x_j}\vec{e}_{1} )\nonumber \\
&&+2 e^{-4\la} \sum_i (\bbe_2, \p_{x_i}\bbe_1)  \lf[ (\p_{x_2}\bw,\bn)\,{\mathbb I}_{1i}-(\p_{x_1}\bw,\bn)\,{\mathbb I}_{2i}\rg] \nonumber \\
&=& -2 \lf< \lf[(\bbe_2,d\bbe_1)\otimes (\bbe_2,d\bbe_1) \rg] \llcorner_g d\bP, d \bw\rg>_g  -2 \lf< \bbI\llcorner_g (\bbe_2,\bbe_1), *_g d \bw \rg>_g. \label{III.3a-4} 
\end{eqnarray}
Combining now the variation of the volume form (\ref{III.3a})-computed in the appendix-and (\ref{III.3a-4}), we obtain
\begin{equation}\nonumber
2\frac{d}{dt}\F_T(\bP_t,\bbe_t)(0)= \int_{D^2}\lf[-2 \lf< \lf[(\bbe_2,d\bbe_1)\otimes (\bbe_2,d\bbe_1) -\frac{1}{2}|\bbe_2,d\bbe1|^2 g \rg] \llcorner_g d\bP, d \bw\rg>_g  -2 \lf< \bbI\llcorner_g (\bbe_2,\bbe_1), *_g d \bw \rg>_g \rg] \, dvol_g
\end{equation}
The thesis follows with an  integration by parts  recalling that by assumption $\bw|_{\bP(\p D^2)}=0$.
\end{proof}

\subsection{Some general conservation laws for conformal immersions} \label{SubSec:GenConsLaw}
The goal of the present section  is to prove the following Proposition.

\begin{Prop}\label{rem-cons-lawR3}
Let $\bP\in{\cal E}(D^2,\R^3)$ be a weak  conformal immersion,  $\lambda=\log(|\p_{x_1}\bP|)=\log(|\p_{x_2}\bP|)$ the conformal factor  and $\vec{e}:=(\vec{e}_1,\vec{e}_2)=e^{-\la}(\p_{x_1}\bP,\p_{x_2}\bP)$ the associated coordinate orthonormal frame. Then the following identities hold
 \be
 \label{a-III.3a-23}
(\ast_g d\vec{\Phi})\cdot\lf[\vec{\mathbb I}\res_g(\vec{e}_2\cdot d\vec{e}_1)+\ast_g\lf(\lf[ \vec{e}_2\cdot d\vec{e}_1\otimes\vec{e}_2\cdot d\vec{e}_1-\ 2^{-1}\, |\vec{e}_2\cdot d\vec{e}_1|^2\ g \rg]\res_g d\vec{\Phi}\rg)\rg] =0
\ee
and
\be
\label{a-III.3a-24R3}
\ds\,\lf< (\ast_g d\vec{\Phi}){\wedge}\lf[\vec{\mathbb I}\res_g(\vec{e}_2\cdot d\vec{e}_1)+\ast_g\lf(\lf[ \vec{e}_2\cdot d\vec{e}_1\otimes\vec{e}_2\cdot d\vec{e}_1-\ 2^{-1}\, |\vec{e}_2\cdot d\vec{e}_1|^2\ g \rg]\res_g d\vec{\Phi}\rg)\rg]\rg>_g=- <\ast_g d\la,d\vec{D}>_g\quad.
\ee
where
\be
\label{a-III.3a-25R3}
d\vec{D}=-{\mathbb I}\res_g\ast_gd\vec{\Phi}\wedge\vec{n}\quad.
\ee
\end{Prop}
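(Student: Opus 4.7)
The plan is a direct verification in local isothermal coordinates; conformality of $\bP$ reduces both identities to elementary linear algebra on the trace-free symmetric tensor $T:=\alpha\otimes\alpha - \tfrac12|\alpha|_g^2\,g$ associated to the 1-form $\alpha := \bbe_2\cdot d\bbe_1$. The first step is the Coulomb-type identity
\[
\alpha \;=\; *_g d\la,
\]
valid for the coordinate orthonormal frame of any conformal immersion. I would obtain it by differentiating the conformality relations $\p_{x_i}\bP\cdot\p_{x_k}\bP = e^{2\la}\delta_{ik}$ and computing, for instance, $\bbe_2\cdot\p_{x_1}\bbe_1 = e^{-2\la}\,\p_{x_2}\bP\cdot\p^2_{x_1 x_1}\bP = -\p_{x_2}\la$ and analogously $\bbe_2\cdot\p_{x_2}\bbe_1 = \p_{x_1}\la$.

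For identity (a), I would decompose $\vec{V} := \bbI\res_g\alpha + *_g(T\res_g d\bP)$ according to the $\R^3$ tangent/normal split: the summand $\bbI\res_g\alpha$ is pointwise normal while $*_g d\bP$ is pointwise tangent, so this part pairs to zero. For the remaining tangent-tangent contribution, the isometric action of $*_g$ on 1-forms in two dimensions reduces the pairing to $\langle d\bP,\,T\res_g d\bP\rangle_g = \mathrm{tr}_g T$, which vanishes because $T$ is trace-free. (If instead one reads the central dot as the scalar 2-form obtained by combining wedge of 1-forms with the $\R^3$ dot product, the same cancellation is governed by the symmetry $T_{12}=T_{21}$, yielding $A_1\cdot B_2 = A_2\cdot B_1$.)

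Identity (b) follows the same normal/tangential split, now with the $\R^3$-valued wedge. Working in the orthonormal basis $(\bbe_1,\bbe_2,\bn)$, the tangent-tangent contribution $g^{ij}(*_g d\bP)_i\wedge(*_g(T\res_g d\bP))_j$ lies entirely in the plane $\bbe_1\wedge\bbe_2$; after substituting the explicit components of $T$ in terms of $\p_{x_i}\la$ via the Coulomb identity of Step 1, the two resulting multiples of $\p_{x_1}\la\,\p_{x_2}\la\,\bbe_1\wedge\bbe_2$ cancel. What remains is the cross term $\langle(*_g d\bP)\wedge(\bbI\res_g *_g d\la)\rangle_g$, a bivector sum of terms of type $h_{jk}\,\bbe_i\wedge\bn$. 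Expanding the target $-\langle *_g d\la,\,d\vec{D}\rangle_g$ in the same basis, with $d\vec{D} = -\bbI\res_g(*_g d\bP)\wedge\bn$, I would match the two bivector expressions coefficient by coefficient: the identification exchanges the roles of $*_g d\bP$ and $*_g d\la$ inside the $\res_g$ contraction, and this swap is legitimate because of the symmetry $h_{jk}=h_{kj}$ of the second fundamental form.

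The main obstacle is bookkeeping: three concurrent operations---the metric pairing $\langle\cdot,\cdot\rangle_g$, the Hodge star $*_g$, and the $\R^3$ wedge/dot---appear simultaneously and must be kept disentangled, and the correct interpretation of the composite contractions $\bbI\res_g(\cdot)$ acting on $\R^3$-vector-valued forms has to be fixed carefully. The conformality hypothesis enters twice, through the Coulomb identity $\alpha = *_g d\la$ and through the isometric action of $*_g$ on 1-forms in two dimensions; these two features, together with the symmetry of $\bbI$ and the trace-free symmetry of $T$, reduce both identities to a finite algebraic check in the fixed orthonormal frame $(\bbe_1,\bbe_2,\bn)$ of $\R^3$.
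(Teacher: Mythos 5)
Your proof of the first identity coincides with the paper's: the term $\vec{\mathbb I}\res_g(\vec e_2\cdot d\vec e_1)$ drops out because it is normal‑valued while $\ast_g d\vec\Phi$ is tangent‑valued, and the remaining pairing reduces to $\langle T,g\rangle_g=\mathrm{tr}_g T=0$ (the paper's \eqref{a-III.3a-17}). For the second identity your route is genuinely different and, as far as the displayed equality goes, it works: writing $\alpha=\bbe_2\cdot d\bbe_1=\ast_g d\la$, the $T$‑part wedges to zero by the symmetric‑against‑antisymmetric cancellation (the paper's \eqref{a-III.3a-18b}), and the surviving cross term $\langle(\ast_g d\vec\Phi)\wedge(\vec{\mathbb I}\res_g \ast_g d\la)\rangle_g$ matches $-\langle\ast_g d\la,\,-{\mathbb I}\res_g(\ast_g d\vec\Phi)\wedge\vec n\rangle_g$ coefficient by coefficient precisely because ${\mathbb I}_{jk}={\mathbb I}_{kj}$; I checked the four components in conformal coordinates and the swap of $\ast_g d\vec\Phi$ and $\ast_g d\la$ inside the contraction is legitimate. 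This is considerably shorter than the paper's derivation, which instead decomposes $\vec{\mathbb I}$ into $\vec H^0$ and $H$, invokes the Codazzi identity \eqref{xsIII.20.0}, and rewrites the left-hand side as the flat divergence $\p_{x_1}[e^{-2\la}\p_{x_2}\vec D]-\p_{x_2}[e^{-2\la}\p_{x_1}\vec D]$ before converting it to $-2\langle\ast_g d\la,d\vec D\rangle_g$.

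There is, however, a gap you should close: nothing in your argument shows that the $\R^3$‑bivector‑valued $1$‑form $-{\mathbb I}\res_g(\ast_g d\vec\Phi)\wedge\vec n$ is \emph{closed}, i.e.\ that a potential $\vec D$ with $d\vec D$ equal to that expression actually exists. The notation $d\vec D$ in \eqref{a-III.3a-25R3} presupposes this, and the existence of $\vec D$ itself (not merely of the $1$‑form) is what is used afterwards: $\vec D$ and $(\la-\bar\la)\nabla\vec D$ enter the conservation laws \eqref{eq:RFR322} and the elliptic system \eqref{eq:SysDeltaSFRFR322}. The closedness is exactly the content of Lemma \ref{lm-identity} (identities \eqref{identity-1}--\eqref{identity-2}), and it is \emph{not} a pointwise algebraic fact: it requires the Codazzi identity (equivalently, the commutation of second derivatives of $\vec\Phi$ together with the Gauss--Weingarten relations). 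This is precisely the "hidden conservation law" that the paper's longer computation establishes en route; your more economical verification of \eqref{a-III.3a-24R3} bypasses it, so you must supply the integrability condition $\p_{x_2}(\p_{x_1}\vec D)=\p_{x_1}(\p_{x_2}\vec D)$ separately, e.g.\ by reproducing the step from \eqref{a-III.3a-19} to \eqref{a-III.3a-21} using \eqref{xsIII.17} and \eqref{xsIII.20.0}.
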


The rest of the section is devoted to the proof of Proposition \ref{rem-cons-lawR3}. We start by computing
\be
\label{a-III.3a-17}
\begin{array}{l}
\ds (\ast d\vec{\Phi})\cdot\lf[\vec{\mathbb I}\res_g(\vec{e}_2\cdot d\vec{e}_1)+\ast\lf(\lf[ \vec{e}_2\cdot d\vec{e}_1\otimes\vec{e}_2\cdot d\vec{e}_1-\ 2^{-1}\, |\vec{e}_2\cdot d\vec{e}_1|^2\ g \rg]\res_g d\vec{\Phi}\rg)\rg]\\[5mm]
\ds\quad=\lf<\lf[ \vec{e}_2\cdot d\vec{e}_1\otimes\vec{e}_2\cdot d\vec{e}_1-\ 2^{-1}\, |\vec{e}_2\cdot d\vec{e}_1|^2\ g \rg],d\vec{\Phi}\dot{\otimes} d\vec{\Phi}\rg>_g\\[5mm]
\ds\quad=\lf<\lf[ \vec{e}_2\cdot d\vec{e}_1\otimes\vec{e}_2\cdot d\vec{e}_1-\ 2^{-1}\, |\vec{e}_2\cdot d\vec{e}_1|^2\ g \rg],g\rg>_g=0
\end{array}
\ee
where the upper dot means a contraction in the ${\R}^3$ coordinates; this gives the first part of the thesis, namely \eqref{a-III.3a-23}.  The second identity of the thesis is more subtle. We compute
\be
\label{a-III.3a-18a}
\begin{array}{l}
\ds\lf< (\ast d\vec{\Phi}){\wedge}\lf[\vec{\mathbb I}\res_g(\vec{e}_2\cdot d\vec{e}_1)+\ast\lf(\lf[ \vec{e}_2\cdot d\vec{e}_1\otimes\vec{e}_2\cdot d\vec{e}_1-\ 2^{-1}\, |\vec{e}_2\cdot d\vec{e}_1|^2\ g \rg]\res_g d\vec{\Phi}\rg)\rg]\rg>_g\\[5mm]
\quad= -\,e^{-4\la}\, \p_{x_2}\vec{\Phi}\wedge\lf(\vec{\mathbb I}_{11}\,(\vec{e}_2\cdot \p_{x_1}\vec{e}_1)+ \vec{\mathbb I}_{12}\,(\vec{e}_2\cdot \p_{x_2}\vec{e}_1) \rg) +\,e^{-4\la}\, \p_{x_1}\vec{\Phi}\wedge\lf(\vec{\mathbb I}_{21}\,(\vec{e}_2\cdot \p_{x_1}\vec{e}_1)+ \vec{\mathbb I}_{22}\,(\vec{e}_2\cdot \p_{x_2}\vec{e}_1) \rg)\\[5mm]
\quad \quad \quad+\lf<d\vec{\Phi}\wedge\lf(\lf[ \vec{e}_2\cdot d\vec{e}_1\otimes\vec{e}_2\cdot d\vec{e}_1-\ 2^{-1}\, |\vec{e}_2\cdot d\vec{e}_1|^2\ g \rg]\res_g d\vec{\Phi}\rg)\rg>_g \quad.
\end{array}
\ee
An elementary computation gives
\be
\label{a-III.3a-18b}
\lf<d\vec{\Phi}\wedge\lf(\lf[ \vec{e}_2\cdot d\vec{e}_1\otimes\vec{e}_2\cdot d\vec{e}_1-\ 2^{-1}\, |\vec{e}_2\cdot d\vec{e}_1|^2\ g \rg]\res_g d\vec{\Phi}\rg)\rg>_g=0 \quad.
\ee
Hence (for the moment the following formula is not used but it will be useful later in the section)
\be
\label{a-III.3a-18c}
\begin{array}{l}
\ds\lf< (\ast d\vec{\Phi}){\wedge}\lf[\vec{\mathbb I}\res_g(\vec{e}_2\cdot d\vec{e}_1)+\ast\lf(\lf[ \vec{e}_2\cdot d\vec{e}_1\otimes\vec{e}_2\cdot d\vec{e}_1-\ 2^{-1}\, |\vec{e}_2\cdot d\vec{e}_1|^2\ g \rg]\res_g d\vec{\Phi}\rg)\rg]\rg>_g=\lf< (\ast d\vec{\Phi}){\wedge}\vec{n},{\mathbb I}\res_g(\vec{e}_2\cdot d\vec{e}_1)\rg>_g
\end{array}
\ee
Using that $\bbe_2\cdot d\bbe_1=\ast d \la=-\frac 1 2 e^{2\la} \ast d e^{-2\la}$, from \eqref{a-III.3a-18a} we get
\be
\label{a-III.3a-18}
\begin{array}{l}
\ds\lf< (\ast d\vec{\Phi}){\wedge}\lf[\vec{\mathbb I}\res_g(\vec{e}_2\cdot d\vec{e}_1)+\ast\lf(\lf[ \vec{e}_2\cdot d\vec{e}_1\otimes\vec{e}_2\cdot d\vec{e}_1-\ 2^{-1}\, |\vec{e}_2\cdot d\vec{e}_1|^2\ g \rg]\res_g d\vec{\Phi}\rg)\rg]\rg>_g\\[5mm]
\quad=2^{-1}\,e^{-2\la}\ \p_{x_2}\vec{\Phi}\wedge\vec{n}\ \lf(-{\mathbb I}_{11}\,\p_{x_2}e^{-2\la}+{\mathbb I}_{12}\,\p_{x_1}e^{-2\la}\rg)- 2^{-1}\, e^{-2\la}\ \p_{x_1}\vec{\Phi}\wedge \vec{n}\ \lf(-{\mathbb I}_{21}\,\p_{x_2}e^{-2\la}+{\mathbb I}_{22}\,\p_{x_1}e^{-2\la}\rg)\\[5mm]
\quad=2^{-1}\,e^{-2\la}\ \p_{x_2}\vec{\Phi}\wedge\vec{n}\ \lf(-{\mathbb I}^0_{11}\,\p_{x_2}e^{-2\la}+{\mathbb I}^0_{12}\,\p_{x_1}e^{-2\la}+\,2\, H\,\p_{x_2}\la\rg)\\[5mm]
\quad\quad- 2^{-1}\, e^{-2\la}\ \p_{x_1}\vec{\Phi}\wedge \vec{n}\ \lf(-{\mathbb I}^0_{21}\,\p_{x_2}e^{-2\la}-{\mathbb I}^0_{11}\,\p_{x_1}e^{-2\la}-\,2\, H\,\p_{x_1}\la\rg) .
\end{array}
\ee
So we have established the following
\be
\label{a-III.3a-19}
\begin{array}{l}
\ds\lf< (\ast d\vec{\Phi}){\wedge}\lf[\vec{\mathbb I}\res_g(\vec{e}_2\cdot d\vec{e}_1)+\ast\lf(\lf[ \vec{e}_2\cdot d\vec{e}_1\otimes\vec{e}_2\cdot d\vec{e}_1-\ 2^{-1}\, |\vec{e}_2\cdot d\vec{e}_1|^2\ g \rg]\res_g d\vec{\Phi}\rg)\rg]\rg>_g\\[5mm]
\quad=\,2^{-1}\,e^{-2\la}\ \p_{x_2}\vec{\Phi}\wedge\vec{n}\ \lf[-\p_{x_2}({\mathbb I}^0_{11}\,e^{-2\la})+\p_{x_1}({\mathbb I}^0_{12}\,e^{-2\la})\rg]\\[5mm]
\quad\quad\, 2^{-1}\,e^{-2\la}\ \p_{x_2}\vec{\Phi}\wedge\vec{n}\ \lf[\p_{x_2}{\mathbb I}^0_{11}\,e^{-2\la}-\p_{x_1}{\mathbb I}^0_{12}\,e^{-2\la}+\, 2\,H\,\p_{x_2}\la\rg]\\[5mm]
\quad\quad-\,2^{-1}\, e^{-2\la}\ \p_{x_1}\vec{\Phi}\wedge \vec{n}\ \lf[-\p_{x_2}({\mathbb I}^0_{21}\,e^{-2\la})-\p_{x_1}({\mathbb I}^0_{11}\,e^{-2\la})\rg]\\[5mm]
\quad\quad-\,2^{-1}\, e^{-2\la}\ \p_{x_1}\vec{\Phi}\wedge \vec{n}\ \lf(\p_{x_2}{\mathbb I}^0_{21}\,e^{-2\la}+\p_{x_1}{\mathbb I}^0_{11}\,e^{-2\la}-\, 2\,H\,\p_{x_1}\la\rg) \quad.
\end{array}
\ee
Using the expression of $\vec{H}^0$ \eqref{xsIII.17} and Codazzi identity (\ref{xsIII.20.0}), we obtain

\be
\label{a-III.3a-20}
\begin{array}{l}
\ds\lf<(\ast d\vec{\Phi}){\wedge}\lf[\vec{\mathbb I}\res_g(\vec{e}_2\cdot d\vec{e}_1)+\ast\lf(\lf[ \vec{e}_2\cdot d\vec{e}_1\otimes\vec{e}_2\cdot d\vec{e}_1-2^{-1} |\vec{e}_2\cdot d\vec{e}_1|^2 g \rg]\res_g d\vec{\Phi}\rg)\rg]\rg>_g\\[5mm]
\qquad \quad=-2^{-1}e^{-2\la}\p_{x_2}\vec{\Phi}\wedge\vec{n}\lf[\p_{x_2}H^0_{\Re}+\p_{x_1}H^0_\Im\rg]-2^{-1}\p_{x_2}\vec{\Phi}\wedge\vec{n}\p_{x_2}(e^{-2\la} H)\\[5mm]
\qquad \quad\quad -2^{-1}e^{-2\la}\p_{x_1}\vec{\Phi}\wedge \vec{n}\lf[\p_{x_2}H^0_\Im-\p_{x_1}H^0_{\Re}\rg]-2^{-1}\ \p_{x_1}\vec{\Phi}\wedge \vec{n}\p_{x_1}(e^{-2\la} H) \quad.
\end{array}
\ee
Hence, using that (the first two equalities follow from \eqref{xsIII.17} and the  others from the definintion of $\mathbb I$)
\begin{eqnarray}
\p_{x_1}(e^{-2\,\la}\,\p_{x_1}\vec{\Phi})-\p_{x_2}(e^{-2\,\la}\,\p_{x_2}\vec{\Phi})=2\ \vec{H}^0_{\Re} \quad , \nonumber \\
\p_{x_1}(e^{-2\,\la}\,\p_{x_2}\vec{\Phi})+\p_{x_2}(e^{-2\,\la}\,\p_{x_1}\vec{\Phi})=-\ 2\ \vec{H}^0_\Im\quad, \nonumber \\
\p_{x_1}\vec{\Phi}\wedge\p_{x_1}\vec{n}=-\p_{x_2}\vec{\Phi}\wedge\p_{x_2}\vec{n}=-\ e^{-2\la}{\mathbb I}_{12} \,\p_{x_1}\vec{\Phi}\wedge\p_{x_2}\vec{\Phi} \quad, \nonumber \\
\p_{x_2}\vec{\Phi}\wedge\p_{x_1}\vec{n}=e^{-2\la}{\mathbb I}_{11} \, \p_{x_1}\vec{\Phi}\wedge\p_{x_2}\vec{\Phi} \quad, \nonumber\\
\p_{x_1}\vec{\Phi}\wedge\p_{x_2}\vec{n}=-\ e^{-2\la}{\mathbb I}_{22} \, \p_{x_1}\vec{\Phi}\wedge\p_{x_2}\vec{\Phi}\quad, \nonumber 
\end{eqnarray}
we get 
\be
\label{a-III.3a-21}
\begin{array}{l}
H^0_{\Im}\ \lf[\p_{x_2}\vec{\Phi}\wedge\p_{x_1}\vec{n}+\p_{x_1}\vec{\Phi}\wedge\p_{x_2}\vec{n}\rg]+ H^0_\Re\ \p_{x_2}\vec{\Phi}\wedge\p_{x_2}\vec{n}-H^0_\Re\ \p_{x_1}\vec{\Phi}\wedge\p_{x_1}\vec{n}\\[5mm]
\quad=\ e^{-2\la}\ H^0_\Im\ ({\mathbb I}_{11}-{\mathbb I}_{22})\ \p_{x_1}\vec{\Phi}\wedge \p_{x_2}\vec{\Phi}
+2\ \ e^{-2\la}\,H^0_{\Re}\ {\mathbb I}_{12}\ \p_{x_1}\vec{\Phi}\wedge \p_{x_2}\vec{\Phi}=0\quad,
\end{array}
\ee
since again $H^0_{\Re}=2^{-1}\ e^{2\la}\,({\mathbb I}_{11}-{\mathbb I}_{22})$ and $H^0_\Im=-e^{2\la}\,{\mathbb I}_{12}$.
We then deduce
\be
\label{a-III.3a-22}
\begin{array}{l}
\ds2\,\lf< (\ast d\vec{\Phi}){\wedge}\lf[\vec{\mathbb I}\res_g(\vec{e}_2\cdot d\vec{e}_1)+\ast\lf(\lf[ \vec{e}_2\cdot d\vec{e}_1\otimes\vec{e}_2\cdot d\vec{e}_1-\ 2^{-1}\, |\vec{e}_2\cdot d\vec{e}_1|^2\ g \rg]\res_g d\vec{\Phi}\rg)\rg]\rg>_g\\[5mm]
\ds\quad=-\p_{x_1}\lf[e^{-2\la}\ \p_{x_1}\vec{\Phi}\wedge[\vec{H}-\vec{H}^0_\Re]+ e^{-2\la}\ \p_{x_2}\vec{\Phi}\wedge\vec{H}^0_\Im\rg]\\[5mm]
\ds\quad \quad -\p_{x_2}\lf[ e^{-2\la}\ \p_{x_1}\vec{\Phi}\wedge\vec{H}^0_\Im+e^{-2\la}\ \p_{x_2}\vec{\Phi}\wedge[\vec{H}+\vec{H}^0_\Re]\rg]\quad .
\end{array}
\ee
Thus using (\ref{a-III.3a-18c}) we have proved so far
\be
\label{a-III.3a-22a}
\begin{array}{l}
\ds2\,\lf< (\ast d\vec{\Phi}){\wedge}\lf[\vec{\mathbb I}\res_g(\vec{e}_2\cdot d\vec{e}_1)+\ast\lf(\lf[ \vec{e}_2\cdot d\vec{e}_1\otimes\vec{e}_2\cdot d\vec{e}_1-\ 2^{-1}\, |\vec{e}_2\cdot d\vec{e}_1|^2\ g \rg]\res_g d\vec{\Phi}\rg)\rg]\rg>_g\\[5mm]
\ds\quad=2\,\lf< (\ast d\vec{\Phi}){\wedge}\vec{n},{\mathbb I}\res_g(\vec{e}_2\cdot d\vec{e}_1)\rg>_g\\[5mm]
\ds\quad=-\p_{x_1}\lf[e^{-4\la}\ \lf[\p_{x_1}\vec{\Phi}\wedge\vec{\mathbb I}_{22}-\ \p_{x_2}\vec{\Phi}\wedge\vec{\mathbb I}_{12}\rg]\rg]-\p_{x_2}\lf[ e^{-4\la}\ \lf[-\,\p_{x_1}\vec{\Phi}\wedge\vec{\mathbb I}_{12}+ \p_{x_2}\vec{\Phi}\wedge\vec{\mathbb I}_{11}\rg]\rg]\quad.
\end{array}
\ee
Now we want to use the second equality in order to write the right hand side in a more convenient way.
Observe that on one hand
\begin{eqnarray}
2\lf< (\ast d\vec{\Phi}){\wedge}\vec{n},{\mathbb I}\res_g(\vec{e}_2\cdot d\vec{e}_1)\rg>_g &=& -2\, {\mathbb I}_{21}\ \p_{x_2}\la\ \p_{x_1}\vec{\Phi}\wedge\vec{n}\ e^{-4\la}
+2\, {\mathbb I}_{22}\ \p_{x_1}\la\ \p_{x_1}\vec{\Phi}\wedge\vec{n}\ e^{-4\la}\nonumber \\
&& -2\, {\mathbb I}_{12}\ \p_{x_1}\la\ \p_{x_2}\vec{\Phi}\wedge\vec{n}\ e^{-4\la} +2\, {\mathbb I}_{11}\ \p_{x_2}\la\ \p_{x_2}\vec{\Phi}\wedge\vec{n}\ e^{-4\la}\quad. \label{a-III.3a-22b}
\end{eqnarray}
On the other hand it holds
\be
\label{a-III.3a-22c}
\begin{array}{l}
\p_{x_1}\lf[e^{-4\la}\ \lf[\p_{x_1}\vec{\Phi}\wedge\vec{\mathbb I}_{22}-\ \p_{x_2}\vec{\Phi}\wedge\vec{\mathbb I}_{12}\rg]\rg]+\p_{x_2}\lf[ e^{-4\la}\ \lf[-\,\p_{x_1}\vec{\Phi}\wedge\vec{\mathbb I}_{12}+ \p_{x_2}\vec{\Phi}\wedge\vec{\mathbb I}_{11}\rg]\rg]\\[5mm]
= e^{-2\la}\,\p_{x_1}\lf[e^{-2\la}\ \lf[\p_{x_1}\vec{\Phi}\wedge\vec{\mathbb I}_{22}-\ \p_{x_2}\vec{\Phi}\wedge\vec{\mathbb I}_{12}\rg]\rg]+e^{-2\la}\,\p_{x_2}\lf[ e^{-2\la}\ \lf[-\,\p_{x_1}\vec{\Phi}\wedge\vec{\mathbb I}_{12}+ \p_{x_2}\vec{\Phi}\wedge\vec{\mathbb I}_{11}\rg]\rg]\\[5mm]
\quad-2\, {\mathbb I}_{22}\ \p_{x_1}\la\ \p_{x_1}\vec{\Phi}\wedge\vec{n}\ e^{-4\la}+2\, {\mathbb I}_{12}\ \p_{x_1}\la\ \p_{x_2}\vec{\Phi}\wedge\vec{n}\ e^{-4\la}\\[5mm]
\quad+2\, {\mathbb I}_{21}\ \p_{x_2}\la\ \p_{x_1}\vec{\Phi}\wedge\vec{n}\ e^{-4\la}-2\, {\mathbb I}_{11}\ \p_{x_2}\la\ \p_{x_2}\vec{\Phi}\wedge\vec{n}\ e^{-4\la} \quad.
\end{array}
\ee
Combining (\ref{a-III.3a-22b}) and (\ref{a-III.3a-22c}) gives
\be
\label{a-III.3a-22d}
\begin{array}{l}
\p_{x_1}\lf[e^{-4\la}\ \lf[\p_{x_1}\vec{\Phi}\wedge\vec{\mathbb I}_{22}-\ \p_{x_2}\vec{\Phi}\wedge\vec{\mathbb I}_{12}\rg]\rg]+\p_{x_2}\lf[ e^{-4\la}\ \lf[-\,\p_{x_1}\vec{\Phi}\wedge\vec{\mathbb I}_{12}+ \p_{x_2}\vec{\Phi}\wedge\vec{\mathbb I}_{11}\rg]\rg]\\[5mm]
= e^{-2\la}\,\p_{x_1}\lf[e^{-2\la}\ \lf[\p_{x_1}\vec{\Phi}\wedge\vec{\mathbb I}_{22}-\ \p_{x_2}\vec{\Phi}\wedge\vec{\mathbb I}_{12}\rg]\rg]+e^{-2\la}\,\p_{x_2}\lf[ e^{-2\la}\ \lf[-\,\p_{x_1}\vec{\Phi}\wedge\vec{\mathbb I}_{12}+ \p_{x_2}\vec{\Phi}\wedge\vec{\mathbb I}_{11}\rg]\rg]\\[5mm]
\quad-2\,\lf< (\ast d\vec{\Phi}){\wedge}\vec{n},{\mathbb I}\res_g(\vec{e}_2\cdot d\vec{e}_1)\rg>_g \quad.
\end{array}
\ee
Thus putting together  this time the second equality of (\ref{a-III.3a-22a})  and (\ref{a-III.3a-22d}) we have obtained the following lemma
\begin{Lm}
\label{lm-identity}
For any smooth conformal immersion   the following identity holds
\be
\label{identity-1}
e^{-2\la}\,\p_{x_1}\lf[e^{-2\la}\ \lf[\p_{x_1}\vec{\Phi}\wedge\vec{\mathbb I}_{22}-\ \p_{x_2}\vec{\Phi}\wedge\vec{\mathbb I}_{12}\rg]\rg]+e^{-2\la}\,\p_{x_2}\lf[ e^{-2\la}\ \lf[-\,\p_{x_1}\vec{\Phi}\wedge\vec{\mathbb I}_{12}+ \p_{x_2}\vec{\Phi}\wedge\vec{\mathbb I}_{11}\rg]\rg]=0\quad, 
\ee
which also implies
\be
\label{identity-2}
\begin{array}{l}
 \ds\p_{x_1}\lf[e^{-2\la}\ \lf[\p_{x_1}\vec{\Phi}\wedge\vec{\mathbb I}_{22}-\ \p_{x_2}\vec{\Phi}\wedge\vec{\mathbb I}_{12}\rg]\rg]=-\p_{x_2}\lf[ e^{-2\la}\ \lf[-\,\p_{x_1}\vec{\Phi}\wedge\vec{\mathbb I}_{12}+ \p_{x_2}\vec{\Phi}\wedge\vec{\mathbb I}_{11}\rg]\rg]\quad.
\end{array}
\ee\hfill$\Box$
\end{Lm}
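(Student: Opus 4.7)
The plan is to extract this identity as a direct algebraic consequence of the two expressions for
\[
Q:=2\lf\langle(\ast d\vec{\Phi})\wedge \vec{n},\, \vec{\mathbb I}\res_g(\vec{e}_2\cdot d\vec{e}_1)\rg\rangle_g
\]
that have just been derived in the preceding computation. Set
\[
\bF_1 := e^{-4\la}\bigl[\p_{x_1}\vec{\Phi}\wedge\vec{\mathbb I}_{22} - \p_{x_2}\vec{\Phi}\wedge\vec{\mathbb I}_{12}\bigr],\qquad \bF_2 := e^{-4\la}\bigl[-\p_{x_1}\vec{\Phi}\wedge\vec{\mathbb I}_{12} + \p_{x_2}\vec{\Phi}\wedge\vec{\mathbb I}_{11}\bigr].
\]

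First, by the final equality of (\ref{a-III.3a-22a}), $Q$ coincides, up to sign, with the standard euclidean divergence $-\p_{x_1}\bF_1 - \p_{x_2}\bF_2$. On the other hand, (\ref{a-III.3a-22d}), which is just the Leibniz rule applied to the factorization $e^{-4\la}=e^{-2\la}\cdot e^{-2\la}$, rewrites this same divergence as
\[
\p_{x_1}\bF_1 + \p_{x_2}\bF_2 \; = \; e^{-2\la}\p_{x_1}\!\lf[e^{-2\la}(\cdots)\rg] + e^{-2\la}\p_{x_2}\!\lf[e^{-2\la}(\cdots)\rg] \; - \; Q,
\]
where the $(\cdots)$ are precisely the factors appearing in the statement of the lemma. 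Adding the two relations cancels the $Q$ contribution and leaves exactly (\ref{identity-1}). The corollary (\ref{identity-2}) then follows by multiplying through by $e^{2\la}$, which is legitimate since $\bP$ is an immersion and $e^{-2\la}$ does not vanish.

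At this stage no real obstacle remains: the genuine analytic content of the lemma has already been absorbed into the earlier steps, and in particular into the vanishing computation (\ref{a-III.3a-21}), which used the explicit conformal representation of $\vec{H}^0$ from (\ref{xsIII.17}) together with the Codazzi identity (\ref{xsIII.20.0}) to eliminate the mean curvature $\vec{H}$ and the conformal factor $\la$, thereby isolating a pure divergence in the components of $\vec{\mathbb I}$. Conceptually, one should recognize (\ref{identity-2}) as the conformal-gauge incarnation of the intrinsic Codazzi-Mainardi equation $\nabla_i\vec{\mathbb I}_{jk}=\nabla_j\vec{\mathbb I}_{ik}$ wedged with $\p_{x_k}\vec{\Phi}$: in a conformal chart the Christoffel symbols package into $\p\la$ terms that reassemble exactly the $e^{-2\la}$ weighting, confirming that the lemma is not a new geometric identity but a repackaging of Codazzi in a divergence form convenient for the conservation-law analysis of Subsection \ref{SubSec:SysConsLaw}.
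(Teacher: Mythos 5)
Your argument is correct and is essentially the paper's own proof: the authors obtain \eqref{identity-1} in exactly this way, by combining the second equality of \eqref{a-III.3a-22a} with \eqref{a-III.3a-22d} so that the term $2\lf<(\ast d\vec{\Phi})\wedge\vec{n},\,{\mathbb I}\res_g(\vec{e}_2\cdot d\vec{e}_1)\rg>_g$ cancels and only the $e^{-2\la}$-weighted divergence survives, after which \eqref{identity-2} follows by multiplying through by $e^{2\la}$. Your closing observation that the lemma is a divergence-form repackaging of Codazzi is also consistent with the paper, since the derivation of \eqref{a-III.3a-22a} rests on \eqref{xsIII.17} and the Codazzi identity \eqref{xsIII.20.0}.
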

Let $\vec{D}$ be the two vector given by
\be
\label{identity-3R3}
\lf\{
\begin{array}{l}
\ds\p_{x_1}\vec{D}:=e^{-2\la}\ \lf[-\,\p_{x_1}\vec{\Phi}\wedge\vec{\mathbb I}_{12}+ \p_{x_2}\vec{\Phi}\wedge\vec{\mathbb I}_{11}\rg]\\[5mm]
\ds-\p_{x_2} \vec{D}:=e^{-2\la}\ \lf[\p_{x_1}\vec{\Phi}\wedge\vec{\mathbb I}_{22}-\ \p_{x_2}\vec{\Phi}\wedge\vec{\mathbb I}_{12}\rg] \quad.
\end{array} 
\rg.
\ee
Using $\vec{D}$, we can rewrite (\ref{a-III.3a-22a}) as
\be
\label{identity-4R3}
\begin{array}{l}
\ds2\,\lf< (\ast d\vec{\Phi}){\wedge}\lf[\vec{\mathbb I}\res_g(\vec{e}_2\cdot d\vec{e}_1)+\ast\lf(\lf[ \vec{e}_2\cdot d\vec{e}_1\otimes\vec{e}_2\cdot d\vec{e}_1-\ 2^{-1}\, |\vec{e}_2\cdot d\vec{e}_1|^2\ g \rg]\res_g d\vec{\Phi}\rg)\rg]\rg>_g\\
\ds\quad=\p_{x_1}\lf[e^{-2\la}\,\p_{x_2}\vec{D}\rg]-
\p_{x_2}\lf[e^{-2\la}\,\p_{x_1}\vec{D}\rg]\quad.
\end{array}
\ee
We conclude observing that
\be
\label{identity-5}
\begin{array}{l}
\p_{x_1}(e^{-2\la}\,\p_{x_2}\vec{D})-\p_{x_2}(e^{-2\la}\,\p_{x_1}\vec{D})=-2\,e^{-2\la}\,\lf[\p_{x_1}\la\,\p_{x_2}\vec{D}-\p_{x_2}\la\,\p_{x_1}\vec{D}\rg]=-2\ <\ast_g d\la,d\vec{D}>\quad.
\end{array}
\ee
Plugging \eqref{identity-5} into   \eqref{identity-4R3} we obtain  \eqref{a-III.3a-24R3}.

\subsection{A System of conservation laws involving Jacobian nonlinearities for the critical points of the  Frame energy. } \label{SubSec:SysConsLaw}

Observe that the 2-vector $\bD$  defined in \eqref{a-III.3a-25R3} (notice that $\bD$ is unique up to constants and in the following we will just be interestes in $\nabla \bD$),  using the more standard notation in $\R^3$ of vector product instead of the wedge product of vectors, can be identified (and we will do it) with the vector  defined by
\begin{equation}\label{eq:DR^3}
\nabla \bD= \bI \res_g \nabla^\perp \bP\times \bn. 
\end{equation}
Notice that $\nabla \bD \in L^2(\D^2)$.

Let us start by noticing that if the pair $(\bP,\bbe)$, where $\bP$ is a weak immersion of $\D^2$ into $\R^3$ and $\bbe$ is a moving frame on $\bP$, is a critical point for the frame energy $\F$ then, up to a reparametrization, we can assume that $\bbe$ is the coordinate moving frame associated to $\bP$, i.e. $\bbe=(\bbe_1,\bbe_2)=\frac{\sqrt{2}}{|\nabla \bP|}(\partial_{x_1}\bP, \partial_{x_2}\bP)$. This is the content of the next lemma. Let us also explicitely remark that when we say that $(\bP,\bbe)$ is a critical point of $\F$ we mean with respect to every normal perturbation of $\bP$ and any rotation of the moving frame $\bbe$.

\begin{Lm}\label{lem:reparametr}
Let  $\bP$ be a weak immersion of $\D^2$ into $\R^3$ and $\bbe$  a moving frame on $\bP$ (we stress that here $\bbe$ can be any moving frame, i.e. we do not assume a priori that $\bbe=(\bbe_1,\bbe_2)=\frac{\sqrt{2}}{|\nabla \bP|}(\partial_{x_1}\bP, \partial_{x_2}\bP)$). Assume that  $(\bP,\bbe)$ is critical for the frame energy $\F$. Then there exists a bilipshitz diffeomorphism $\psi:\D^2 \to \D^2$ such that the new weak immersion $\tilde{\bP}:=\bP\circ \psi$ is conformal and $\bbe$ is the coordinate moving frame associated to $\tilde{\bP}$, i.e. $\bbe=(\bbe_1,\bbe_2)=\frac{\sqrt{2}}{|\nabla \tilde{\bP}|}(\partial_{x_1}\tilde{\bP}, \partial_{x_2}\tilde{\bP})$. 
\end{Lm}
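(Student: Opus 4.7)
The plan has two steps: (i) exploit criticality of $(\bP,\bbe)$ under rotations of the moving frame alone to deduce that $\bbe$ is a Coulomb frame for the induced metric $g=\bP^*g_{\R^3}$; (ii) apply the classical Chern construction on the simply connected disc to realize any Coulomb frame as the coordinate frame of a conformal parametrization of $\bP$.

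For step (i) I note that $\F=\F_T+W(\bP)$ and $W(\bP)$ depends only on $\bP$, so variations of the frame alone test only $\F_T$. Given $\alpha\in C^{\infty}_c(\D^2)$ and $t$ small, set
\[
\bbe^t_1:=\cos(t\alpha)\bbe_1+\sin(t\alpha)\bbe_2,\qquad \bbe^t_2:=-\sin(t\alpha)\bbe_1+\cos(t\alpha)\bbe_2.
\]
Using $|\bbe_j|\equiv 1$ and $\bbe_1\cdot\bbe_2\equiv 0$ almost every term cancels and one is left with the clean identity
\[
\bbe^t_1\cdot d\bbe^t_2 \;=\; \bbe_1\cdot d\bbe_2 \;-\; t\,d\alpha,
\]
whence $\frac{d}{dt}\F_T(\bP,\bbe^t)\big|_{t=0}=-\int_{\D^2}\langle\bbe_1\cdot d\bbe_2,\,d\alpha\rangle_g\,dvol_g$. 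Criticality forces this to vanish for every $\alpha$, i.e.\ $d^{*_g}(\bbe_1\cdot d\bbe_2)=0$ weakly on $\D^2$: $\bbe$ is Coulomb.

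For step (ii), $\D^2$ being simply connected yields $\lambda\in W^{1,2}(\D^2)$ with $\bbe_1\cdot d\bbe_2=*_g\,d\lambda$. Let $\eta^j:=\bbe_j\cdot d\bP$ denote the $g$-orthonormal coframe on $\D^2$ dual to $\bbe$; the first structure equation $d\eta^j=-\omega^j_k\wedge\eta^k$ with connection form $\omega^1_2=\bbe_1\cdot d\bbe_2=*_g\,d\lambda$ gives by direct expansion $d(e^{\lambda}\eta^j)=0$ for $j=1,2$. Integrating on the simply connected disc produces $x_1,x_2\in W^{1,\infty}(\D^2)$ with $dx_j=e^{\lambda}\eta^j$; the map $\Psi:=(x_1,x_2)$ is a local bilipschitz homeomorphism, and, up to restricting to a smaller disc and composing with a conformal identification, $\psi:=\Psi^{-1}$ is a bilipschitz diffeomorphism of $\D^2$ onto $\D^2$. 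Setting $\tilde\bP:=\bP\circ\psi$ one reads off directly from $dx_j=e^{\lambda}\eta^j$ that $\p_{x_j}\tilde\bP=e^{-\lambda}\bbe_j$, so $\tilde\bP$ is conformal and $\bbe$ is precisely its coordinate moving frame.

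The main obstacle I expect is step (ii): turning $\Psi$ into a genuine bilipschitz \emph{diffeomorphism of the disc} (and not just a local one) in the weak regularity class $\bP\in W^{1,\infty}$, $\bbe\in W^{1,2}$. This is however exactly the analytic content of the Chern construction for weak Coulomb frames, as developed by H\'elein \cite{Hel} and revisited in Rivi\`ere's treatment of weak immersions \cite{Riv2,RiCours}, whose statements I would invoke directly rather than reprove.
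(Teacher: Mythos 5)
Your proposal is correct and follows essentially the same route as the paper: criticality under infinitesimal rotations of the frame alone yields the Coulomb condition $d^{*_g}(\bbe_1\cdot d\bbe_2)=0$, and the conformal reparametrization then comes from the Chern moving frame method for weak Coulomb frames, which the paper likewise invokes by reference to \cite{RiCours} rather than reproving. Your explicit computation $\bbe^t_1\cdot d\bbe^t_2=\bbe_1\cdot d\bbe_2-t\,d\alpha$ is exactly the ``simple computation'' the paper alludes to.
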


\begin{proof}
For the moment fix $\bP$. From the criticality of the moving frame $\bbe$ for the frame energy $\F$- or equivalently for the tangential frame energy $\F_T$-with respect to rotations (i.e. with respect to variations of the type $\bbe_t=e^{it\theta}\bbe$ for $\theta \in [0,2\pi]$), a simple computation shows that the frame $\bbe$ satisfies the Coulomb condition $\div(\bbe_1\cdot \nabla\bbe_2)=0$. At this point the existence of the reparametrization $\psi$ can be performed using the so called Chern moving frame methos and it is well known (see for instance \cite{RiCours}). 
\end{proof}

From now on, if $(\bP,\bbe)$ is a critical point of the frame energy $\F$, we will always assume that $\bbe$ is the coordinate orthonormal frame associated to $\bP$;  this is not restrictive, up to bilipschitz reparametrizations of $\bP$, thanks to Lemma \ref {lem:reparametr}. Therefore when saying that $\bP$ is a critical point of $\F$ we will mean that $(\bP,\bbe)$ is critical, where $\bbe$ is the coordinate orthonormal frame associated to $\bP$.

We remind that by identity \eqref{eq:FFTW}, we have $\F(\bP,\bbe)=\F_T(\bP,\bbe)+W(\bP)$; combining the first variation formula of the tangential frame energy $\F_T$ computed in \eqref{III.3a-16R3}
 with the first variation of the Willmore functional $W$ we obtain the one of $\F$. Let us recall that the first variation of the Willmore functional $W(\bP):=\int_{\bP} H^2 dvol_g$ on a weak conformal immersion $\bP$ of a disk $\D^2$ into $\R^3$ with respect a smooth vector field  $ \bw \in C^\infty_c(\R^3,\R^3)$ with $\bw|_{\bP(\p D^2)}=0$ is given by (for more detail see \cite{RiCours})
\begin{equation}\label{eq:1VarWR3}
\frac{d}{dt}_{|t=0} W(\bP+t\bw)=\int_{\D^2} \frac{1}{2}\div \lf[\nabla \bH -3 \nabla H  \, \bn + \nabla^\perp\bn \times \bH  \rg] \cdot \bw \quad,  
\end{equation} 
so that, we obtain
\begin{eqnarray}\label{eq:1VarFR3}
\frac{d}{dt}_{|t=0} \F(\bP+t \bw)&=&\int_{\D^2} \div\Big[ \frac{1}{2} \lf(\nabla \bH -3 \nabla H  \, \bn + \nabla^\perp\bn \times \bH \rg)  \\
&& \qquad \quad\;   -\vec{\mathbb I}\res_g(\vec{e}_2 \cdot \nabla^{\perp} \vec{e}_1)-\bbe_2 \cdot \nabla^\perp \bbe_1 \, (\bbe_2 \cdot \nabla \bbe_1, \nabla \bP)_g+\frac{1}{2} |\bbe_2 \cdot \nabla \bbe_1|_g^2 \nabla^\perp \bP\Big] \cdot \bw \quad.\nonumber
\end{eqnarray} 
So we obtained the following proposition.

\begin{Prop}
Let $\bP$ be a  weak conformal immersion  of the disk $\D^2$ into $\R^3$. Then $\bP$ is a critical point of the frame energy $\F$ if and only if 
\be\label{eq:ELeqFR3}
\div\Big[ \frac{1}{2} \lf(\nabla \bH -3 \nabla H  \, \bn + \nabla^\perp\bn \times \bH \rg)   -\vec{\mathbb I}\res_g(\vec{e}_2 \cdot \nabla^{\perp} \vec{e}_1)-\bbe_2 \cdot \nabla^\perp \bbe_1 \, (\bbe_2 \cdot \nabla \bbe_1, \nabla \bP)_g+\frac{1}{2} |\bbe_2 \cdot \nabla \bbe_1|_g^2 \nabla^\perp \bP\Big]=0 
\ee
\end{Prop}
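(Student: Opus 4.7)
The plan is direct: the proposition asserts that the Euler--Lagrange equation of $\F$ is exactly the divergence formulation stated in \eqref{eq:ELeqFR3}, so the heavy lifting has already been done in Proposition \ref{rem:1VarFrameR3}, and what remains is to combine the tangential variation with the classical Willmore variation and apply the fundamental lemma of the calculus of variations.

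First, I would use the orthogonal decomposition $\F(\bP,\bbe)=\F_T(\bP,\bbe)+W(\bP)$ recorded in \eqref{eq:FFTW}, together with the observation made right before the statement: by Lemma \ref{lem:reparametr}, criticality with respect to tangential rotations of the frame is equivalent to the Coulomb condition, which by a bilipschitz reparametrization allows us to assume that $\bbe$ is the coordinate moving frame associated to a conformal $\bP$. Consequently the only variations left to inspect are normal--in--$\R^3$ variations $\bP_t=\bP+t\,\bw$, where $\bw\in C^\infty_c(\R^3,\R^3)$ with $\bw|_{\bP(\p\D^2)}=0$, coupled with the induced frame $\bbe_t$ of \eqref{III.3a-1}--\eqref{III.3a-2}.

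Next, I would simply add the two first variation formulas. Proposition \ref{rem:1VarFrameR3} gives
\[
\tfrac{d}{dt}\F_T(\bP_t,\bbe_t)(0) \;=\; \int_{\D^2}\bigl(\bw,\ \div\bigl[-\vec{\mathbb I}\res_g(\vec{e}_2,\nabla^\perp \vec{e}_1) - (\bbe_2,\nabla^\perp\bbe_1)\,\langle(\bbe_2,\nabla\bbe_1),\nabla\bP\rangle_g + \tfrac12(\bbe_2,\nabla\bbe_1)^2\,\nabla^\perp\bP\bigr]\bigr),
\]
in its divergence form, while the divergence form of the Willmore first variation due to the second author \cite{Riv1} (see also \cite{RiCours}) is recalled in \eqref{eq:1VarWR3}. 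Summing the two contributions produces \eqref{eq:1VarFR3}. Since $\bw$ is an arbitrary compactly supported smooth vector field on $\R^3$ vanishing on $\bP(\p\D^2)$, the fundamental lemma of the calculus of variations forces the pointwise vanishing of the divergence, namely \eqref{eq:ELeqFR3}. Conversely, \eqref{eq:ELeqFR3} combined with integration by parts immediately yields $\frac{d}{dt}|_{t=0}\F(\bP_t,\bbe_t)=0$ for every such $\bw$, giving the ``if'' direction.

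The only point requiring some mild care is the book\-keeping that variations $\bw\in C^\infty_c(\R^3,\R^3)$ indeed exhaust the admissible directions in $\mathcal{E}(\D^2,\R^3)$: tangential components are absorbed by the reparametrization freedom (already used in passing to conformal coordinates and the coordinate frame), while the normal components exhaust the geometric degrees of freedom. There is no genuine analytic obstacle here; the whole content of the proposition is the rewriting \eqref{III.3a-16R3} of the tangential first variation, which was the subject of Proposition \ref{rem:1VarFrameR3}.
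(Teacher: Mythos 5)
Your proposal is correct and follows essentially the same route as the paper: the paper also obtains \eqref{eq:ELeqFR3} by adding the divergence form of the tangential first variation from Proposition \ref{rem:1VarFrameR3} to the Willmore first variation \eqref{eq:1VarWR3}, after reducing to the coordinate frame via Lemma \ref{lem:reparametr}, and then invoking the arbitrariness of $\bw$. The only cosmetic caveat is that for a weak immersion the vanishing of the divergence should be understood in the distributional sense rather than pointwise, which is how the paper implicitly treats it.
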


\begin{Lm}\label{lm:LR3}
Let $\bP$ be a weak conformal immersion of the disc $D^2$ into $\R^3$ critical for the frame energy $\F$. Then there exists a vector field $\bL_F\in L^{2,\infty}_{loc}(\D^2)$ with $\nabla \bL_F \in L^1(D^2)$ such that
\begin{eqnarray}
\nabla^\perp \bL_F&=&\frac{1}{2} \lf(\nabla \bH -3 \nabla H  \, \bn + \nabla^\perp\bn \times \bH \rg) \nonumber \\
&&-\vec{\mathbb I}\res_g(\vec{e}_2 \cdot \nabla^{\perp} \vec{e}_1)-\bbe_2 \cdot \nabla^\perp \bbe_1 \, (\bbe_2 \cdot \nabla \bbe_1, \nabla \bP)_g+\frac{1}{2} |\bbe_2 \cdot \nabla \bbe_1|_g^2 \nabla^\perp \bP \quad .\nonumber
\end{eqnarray}
\end{Lm}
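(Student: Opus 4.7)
The preceding proposition established that for a critical point of $\F$ the Euler--Lagrange equation $\div \bV = 0$ holds in ${\cal D}'(\D^2)$, where $\bV$ denotes the vector field appearing in the square bracket. Since $\D^2$ is simply connected, the distributional Poincar\'e lemma immediately produces a distribution $\bL_F$, unique up to an additive constant, with $\nabla^\perp \bL_F = \bV$. The substance of the lemma is therefore not the existence of a primitive but the regularity assertion $\bL_F \in L^{2,\infty}_{\mathrm{loc}}$, $\nabla \bL_F \in L^1$.

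To obtain this I would split $\bV = \bV_W + \bV_T$ into the Willmore contribution
$\bV_W := \frac12(\nabla \bH - 3\nabla H\,\bn + \nabla^\perp \bn \times \bH)$
and the three tangential--frame terms collected in $\bV_T$. Each summand of $\bV_T$ is a pointwise product of factors belonging to $L^2(\D^2)$ (namely $\bbI$ and the Coulomb one--form $\bbe_2 \cdot \nabla \bbe_1$) with factors in $L^\infty$ (namely $\nabla \bP$ and $e^{-2\la}$, which is bounded a.e.\ because $\bP \in {\cal E}(\D^2,\R^3)$), hence $\bV_T \in L^1_{\mathrm{loc}}(\D^2)$. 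A primitive $\bL_T$ with $\nabla^\perp \bL_T = \bV_T$ therefore exists in $W^{1,1}_{\mathrm{loc}}$, and the critical Sobolev embedding $W^{1,1}(\D^2) \hookrightarrow L^{2}_{\mathrm{loc}}(\D^2) \subset L^{2,\infty}_{\mathrm{loc}}(\D^2)$ places $\bL_T$ in the desired class with $\nabla \bL_T \in L^1$.

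The delicate block is $\bV_W$, which is a priori only in $H^{-1}_{\mathrm{loc}} + L^1_{\mathrm{loc}}$ because of the $\nabla \bH$ term, so the Poincar\'e step alone cannot yield the claimed regularity. Here I would invoke the Willmore conservation law from \cite{Riv1}--\cite{RiCours}: writing $\bH = H\bn$ with $\bn \in W^{1,2} \cap L^\infty$ and reorganizing via Leibniz, $\bV_W$ can be rewritten as a sum of pointwise $L^1$ terms (products of $L^2$ factors) plus Jacobian combinations of $W^{1,2}$ quantities. These Jacobians belong to the Hardy space ${\cal H}^1 \subset L^1$ by the Coifman--Lions--Meyer--Semmes compensated compactness theorem. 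This allows one to construct a primitive $\bL_W$ of $\bV_W$ in $L^{2,\infty}_{\mathrm{loc}}$ with $\nabla \bL_W \in L^1$, and then $\bL_F := \bL_W + \bL_T$ has the asserted regularity.

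The main obstacle is therefore not the formal Poincar\'e step but the control of the $\nabla \bH$ contribution in $\bV_W$: it is precisely at this point that the algebraic cancellations behind Rivi\`ere's Willmore conservation law, combined with the Hardy-space improvement coming from compensated compactness, are indispensable. The tangential--frame terms are, by contrast, soft: they sit in $L^1$ pointwise and require no hidden structure.
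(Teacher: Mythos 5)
Your overall architecture agrees with the paper's: criticality gives $\div\bV=0$ for the bracketed field $\bV$, a weak Poincar\'e lemma produces the potential, and the real content is the regularity. But two steps do not close as written. First, the splitting $\bV=\bV_W+\bV_T$ with \emph{separate} primitives is illegitimate: $\div\bV=0$ holds only for the sum, so there is in general no $\bL_T$ with $\nabla^\perp\bL_T=\bV_T$ (nor an $\bL_W$ with $\nabla^\perp\bL_W=\bV_W$). One must build a single primitive of the full field --- the paper does this by convolving $\bV^\perp$ with the kernel $(2\pi)^{-1}\log r$, taking the divergence and subtracting a harmonic vector field --- and only afterwards read the regularity off the decomposition of $\bV$.

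Second, and more substantively, the Hardy-space/CLMS argument for the Willmore block is not the operative mechanism and does not apply. After the Leibniz reorganization $\nabla H\,\bn=\nabla(H\bn)-H\nabla\bn$ the field $\bV_W$ becomes an exact gradient of a multiple of $\bH$ plus terms of the form $H\nabla\bn$ and $\nabla^\perp\bn\times\bH$: the latter are plain pointwise products of $L^2$ factors, hence in $L^1$ but in no way Jacobians of $W^{1,2}$ quantities, so Coifman--Lions--Meyer--Semmes gives nothing; and the leading term $\nabla\bH$ is the gradient of a function that is merely $L^2$, so it lies neither in $L^1$ nor in ${\cal H}^1$. What actually saves the day is that this leading term is an \emph{exact gradient}, hence curl-free: it contributes nothing to $\Delta\bL_F=\mathrm{curl}\,\bV$ (equivalently, in the convolution construction it produces a Calder\'on--Zygmund term $\nabla^2(\log r)\ast\bH\in L^2$), while the genuine $L^1$ remainder is handled by the Riesz-potential estimate $I_1:L^1\to L^{2,\infty}$ of Adams --- precisely the ingredient the paper cites. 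Replacing your compensated-compactness step by this gradient/curl-free observation plus Adams' estimate, and working with one primitive of the whole divergence-free field, repairs the argument and brings it in line with the paper's proof.
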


\begin{proof}
By the first variation formula 	\eqref{eq:1VarFR3}, we know that if $\bP$ is critical for the frame energy then
\begin{eqnarray}
&&\div\Big[ \frac{1}{2} \lf(\nabla \bH -3 \nabla H  \, \bn + \nabla^\perp\bn \times \bH \rg)  \\
&&\qquad   -\vec{\mathbb I}\res_g(\vec{e}_2 \cdot \nabla^{\perp} \vec{e}_1)-\bbe_2 \cdot \nabla^\perp \bbe_1 \, (\bbe_2 \cdot \nabla \bbe_1, \nabla \bP)_g+\frac{1}{2} |\bbe_2 \cdot \nabla \bbe_1|_g^2 \nabla^\perp \bP\Big] =0 \quad.\nonumber
\end{eqnarray}
So by the weak Poincar\'e Lemma (more precisely, one takes  successively the convolution of
the $^\perp$ of the divergence free quantity above  with the Poisson kernel $(2\pi)^{-1} \log r$, then taking
the divergence and finally subtracting some harmonic vector field one gets the conclusion; for more details see the beginning of the  proof of \cite[Theorem VII.14]{RiCours0}; in particular, the $L^{2,\infty}$ regularity of $\bL_F$ follows by a classical result of Adams \cite{Ad} on Riesz potentials) there exists $\bL_F$ as desired.
\end{proof}

\begin{Lm}\label{lm:SystCSR322}
Let $\bP$ be a weak conformal immersion of the disc $D^2$ into $\R^3$ critical for the frame energy $\F$ and let $\bL_F$ given by Lemma \ref{lm:LR3}. Then the following system of conservation laws holds:
\be
\label{eq:SystCSR322}
\lf\{
\begin{array}{l}
\div\lf(\lf< \bL_F, \nabla^\perp \bP  \rg>_{\R^3} \rg)=0 \\[5mm]
\div \lf( \bL_F \times \nabla^\perp \bP  +  \la \;\nabla^\perp \bD + H \nabla^\perp \bP\rg)=0 \quad.
\end{array}
\rg.
\ee
Therefore there exists  a function $S_F\in W^{1,(2,\infty)}_{loc}(\D^2,\R)$ and a vector field $\bR_F \in W^{1,(2,\infty)}_{loc}(\D^2,\R^3)$ satisfying
\begin{eqnarray}
\nabla S_F &=& \lf< \bL_F, \nabla \bP  \rg>_{\R^3} \label{eq:SFR322} \\
\nabla \bR_F &=&  \bL_F \times \nabla \bP  + (\la-\bar{\la}) \; \nabla \bD-H \nabla \bP \quad, \label{eq:RFR322}
\end{eqnarray}
where $\bar{\lambda}$ is the mean value of $\lambda$ on $D^2$.
\end{Lm}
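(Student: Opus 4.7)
The strategy is to split the Euler--Lagrange ``force'' $\bL_F$ into a Willmore part and a tangential part. Concretely I write $\bL_F=\bL_W+\bL_T$ with
\[
\nabla^\perp \bL_W:=\tfrac12\bigl(\nabla \bH-3\,\nabla H\,\bn+\nabla^\perp\bn\times\bH\bigr)
\]
and
\[
\nabla^\perp\bL_T:=-\bbI\res_g(\bbe_2\cdot\nabla^\perp\bbe_1)-(\bbe_2\cdot\nabla^\perp\bbe_1)\,(\bbe_2\cdot\nabla\bbe_1,\nabla\bP)_g+\tfrac12\,|\bbe_2\cdot\nabla\bbe_1|_g^2\,\nabla^\perp\bP,
\]
the Willmore block coming from \eqref{eq:1VarWR3} and the tangential block from Proposition~\ref{rem:1VarFrameR3}. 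The plan is then to add the Willmore conservation laws of \cite{Riv1},
\[
\div\bigl\langle \bL_W,\nabla^\perp\bP\bigr\rangle_{\R^3}=0,\qquad \div\bigl(\bL_W\times\nabla^\perp\bP+H\,\nabla^\perp\bP\bigr)=0,
\]
to analogous statements for $\bL_T$, which I extract from the general identities of Proposition~\ref{rem-cons-lawR3}.

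For the first (scalar) identity in \eqref{eq:SystCSR322}, using that $\sum_i\partial_{x_i}\partial^\perp_{x_i}\bP\equiv 0$ I immediately get $\div\bigl\langle \bL_T,\nabla^\perp\bP\bigr\rangle_{\R^3}=-\nabla^\perp\bL_T\cdot\nabla\bP$. After converting between the Cartan calculus of Proposition~\ref{rem-cons-lawR3} and the euclidean $\nabla/\nabla^\perp$ notation, the right-hand side is exactly the scalar quantity on the left-hand side of identity \eqref{a-III.3a-23}, which that identity shows vanishes on \emph{every} weak conformal immersion. Hence $\div\langle \bL_T,\nabla^\perp\bP\rangle_{\R^3}=0$, and adding the Willmore contribution yields the first line of \eqref{eq:SystCSR322}.

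For the second (vector-valued) identity, a parallel computation produces $\div(\bL_T\times\nabla^\perp\bP)$ as the divergence-type rewriting of the wedge quantity on the left-hand side of \eqref{a-III.3a-24R3}. By \eqref{a-III.3a-24R3} together with identity \eqref{identity-5}, it equals a multiple of
\[
\partial_{x_1}\bigl(e^{-2\la}\,\partial_{x_2}\bD\bigr)-\partial_{x_2}\bigl(e^{-2\la}\,\partial_{x_1}\bD\bigr),
\]
a ``defect'' depending only on $\la$ and $\bD$. The key algebraic observation is that this defect is absorbed into $\div(\la\,\nabla^\perp\bD)$: since $\div(\nabla^\perp\bD)=0$, one has $\div(\la\,\nabla^\perp\bD)=\nabla\la\cdot\nabla^\perp\bD$, and a direct computation using the expression of $\langle\ast_g d\la,d\bD\rangle_g$ in conformal coordinates shows that \eqref{identity-5} is equivalent to $\div(\bL_T\times\nabla^\perp\bP+\la\,\nabla^\perp\bD)=0$. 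Superimposing with the Willmore identity $\div(\bL_W\times\nabla^\perp\bP+H\,\nabla^\perp\bP)=0$ gives the second line of \eqref{eq:SystCSR322}.

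Once \eqref{eq:SystCSR322} is in place, the existence of $S_F\in W^{1,(2,\infty)}_{\rm loc}$ and $\bR_F\in W^{1,(2,\infty)}_{\rm loc}$ satisfying \eqref{eq:SFR322}--\eqref{eq:RFR322} follows from the weak Poincar\'e lemma on the simply-connected disc $\D^2$, in exactly the manner already used for $\bL_F$ in Lemma~\ref{lm:LR3}: a divergence-free field of class $L^{2,\infty}_{\rm loc}$ admits a $\nabla^\perp$-potential in $W^{1,(2,\infty)}_{\rm loc}$, and the integrability is inherited from the $L^{2,\infty}_{\rm loc}$-regularity of $\bL_F$ ensured by Adams' estimate on Riesz potentials. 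The subtraction of the mean value $\bar\la$ in \eqref{eq:RFR322} is only a technical device to guarantee this Lorentz--Sobolev class, since $\bar\la\,\nabla^\perp\bD$ is itself divergence-free. The most delicate point in the whole argument is the precise algebraic matching between $\nabla^\perp\bL_T$ and the combinations on the left-hand sides of \eqref{a-III.3a-23}--\eqref{a-III.3a-24R3}: once this matching is secured, both conservation laws follow from Proposition~\ref{rem-cons-lawR3} together with Rivi\`ere's Willmore conservation calculus.
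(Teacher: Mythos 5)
Your proposal is correct and follows essentially the same route as the paper: the authors likewise obtain \eqref{eq:SystCSR322} by pairing the tangential block of $\nabla^\perp\bL_F$ with the general identities of Proposition \ref{rem-cons-lawR3} (identities \eqref{a-III.3a-23}, \eqref{a-III.3a-24R3} together with \eqref{identity-5}), superposing the classical Willmore conservation laws of Theorem VII.14 in \cite{RiCours0} for the remaining block, and then invoking $\div\circ\nabla^\perp\equiv 0$ and the weak Poincar\'e lemma exactly as in Lemma \ref{lm:LR3} to produce $S_F$ and $\bR_F$. The algebraic matching you flag as delicate is left equally implicit in the paper's own (three-line) proof, so nothing is missing relative to it.
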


\begin{proof}
Combining Remark \ref{rem-cons-lawR3}, the definition of $\bL_F$ in Lemma \ref{lm:LR3}, and Theorem VII.14  in \cite{RiCours0} (in particular see equations (VII.187) and (VII.204)),  we have
\be\nonumber
\lf\{
\begin{array}{l}
\lf< \nabla \bL_F, \nabla^\perp \bP  \rg>_g =0 \\[5mm]
\lf< \nabla \bL_F \times \nabla^\perp \bP  \rg>_g + <\nabla \la, \nabla^\perp \bD>_g + <\nabla H, \nabla^\perp \bP>_g=0 \quad.
\end{array}
\rg.
\ee
Since $\div\circ \nabla^\perp \equiv 0$, the last system is equivalent to the desired system \eqref{eq:SystCSR322}. The existence and the regularity of $S_F$ and $\bR_F$ is analogous to the existence of $\bL_F$ in Lemma \ref{lm:LR3}.
\end{proof}

\begin{Prop}\label{prop:SysRSFR322}
Let $\bP$ be a weak conformal immersion of the disc $D^2$ into $\R^3$ critical for the frame energy $\F$ and let $S_F \in W^{1,(2,\infty)}_{loc}(\D^2,\R), \bR_F\in W^{1,(2,\infty)}(\D^2,\R^3)$ be  given by Lemma \ref{lm:SystCSR322}. Then their gradients satisfy the following system:
\be\label{eq:SysgradSFRFR322}
\lf\{
\begin{array}{l}
\nabla S_F = -  \lf<\nabla^\perp \bR_F, \bn \rg> + (\la-\bar{\la}) \; \lf<\nabla^\perp \bD, \bn\rg> \\[5mm]
\nabla \bR_F=    \bn \times\nabla^\perp \bR_F + \nabla^\perp S_F \; \bn +  (\la-\bar{\la}) \; \nabla\bD +  (\la-\bar{\la}) \lf( \lf<\nabla^\perp\bD, \bbe_2 \rg> \bbe_1 -  \lf<\nabla^\perp\bD, \bbe_1 \rg> \bbe_2\rg)\quad.
\end{array}
\rg.
\ee
Therefore $(S_F,\bR_F,\bD,\bP,\la)$ satisfy the following elliptic system  
\be\label{eq:SysDeltaSFRFR322}
\lf\{
\begin{array}{l}
\Delta S_F= -  \lf<\nabla^\perp \bR_F, \nabla \bn \rg> + \div \left[ (\la-\bar{\la}) \; \lf<\nabla^\perp \bD, \bn\rg> \right] \\[5mm]
\Delta \bR_F=  \nabla \bn \times\nabla^\perp \bR_F  + \nabla^\perp S_F \;  \nabla \bn\\ [3mm]
            \qquad \qquad + \div \left[ (\la-\bar{\la}) \; \nabla\bD +  (\la-\bar{\la}) \lf( \lf<\nabla^\perp\bD, \bbe_2 \rg> \bbe_1 -  \lf<\nabla^\perp\bD, \bbe_1 \rg> \bbe_2\rg) \right] \\[5mm]
\Delta \bD = \div \lf( \bI \res_g \nabla^\perp \bP \times \bn \rg) \\[5mm]						
[1-(\la-\bar{\la})] \, \Delta \bP = - \lf< \nabla \bR_F \times \nabla^\perp \bP\rg> - \lf<\nabla S_F, \nabla^\perp \bP \rg> \\[5mm]
\Delta \lambda = - \lf<\nabla^\perp \bbe_1, \nabla \bbe_2 \rg>	\quad.				
\end{array}
\rg.
\ee
As a consequence, we have that $S_F\in W^{1,2}_{loc}(\D^2,\R)$ and $\bR_F \in W^{1,2}_{loc}(\D^2,\R^3)$.
\end{Prop}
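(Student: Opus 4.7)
The plan is to prove the proposition in three stages: first, derive the pointwise first-order system \eqref{eq:SysgradSFRFR322} from Lemma \ref{lm:SystCSR322} by eliminating the auxiliary vector field $\bL_F$ using the moving frame $(\bbe_1,\bbe_2,\bn)$; second, take the divergence of \eqref{eq:SysgradSFRFR322} to obtain the elliptic system \eqref{eq:SysDeltaSFRFR322}; third, exploit the Jacobian structure of the right-hand sides via integrability-by-compensation to upgrade $S_F$ and $\bR_F$ from $W^{1,(2,\infty)}_{loc}$ to $W^{1,2}_{loc}$.

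For the first stage, I would decompose the $\R^3$-valued field $\bL_F$ in the orthonormal frame as $\bL_F=\alpha\bbe_1+\beta\bbe_2+\gamma\bn$ with scalar coefficients $\alpha,\beta,\gamma$. Using conformality $\partial_{x_j}\bP=e^\la\bbe_j$, the identity $\nabla S_F=\langle\bL_F,\nabla\bP\rangle$ pins down the tangential coefficients $\alpha=e^{-\la}\partial_{x_1}S_F$ and $\beta=e^{-\la}\partial_{x_2}S_F$. Next, compute $\bL_F\times\partial_{x_j}\bP$ in the frame using the cyclic identities $\bbe_1\times\bbe_2=\bn$, $\bbe_2\times\bn=\bbe_1$, $\bn\times\bbe_1=\bbe_2$; matching with the second identity of Lemma \ref{lm:SystCSR322} identifies the normal coefficient $\gamma$ with a tangential component of $\nabla\bR_F$, and the normal components of $\nabla\bR_F$ are then controlled by $\nabla S_F$ modulo the $(\la-\bar\la)\nabla\bD$ contribution. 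Substituting back and rewriting via the elementary frame identity $\bn\times\bv=(\bv\cdot\bbe_1)\bbe_2-(\bv\cdot\bbe_2)\bbe_1$ (valid for every $\bv\in\R^3$) applied to $\bv=\partial_{x_j}\bD$ produces precisely the two relations of \eqref{eq:SysgradSFRFR322}; the $H\nabla\bP$ term from Lemma \ref{lm:SystCSR322} does not appear explicitly because the cross-product with $\bn$ in $\bn\times\nabla^\perp\bR_F$ re-extracts it automatically from the tangential components of $\nabla\bR_F$.

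For the second stage, I would differentiate each line of \eqref{eq:SysgradSFRFR322}. Since $\div\circ\nabla^\perp=0$, all second-order derivatives of $S_F$ and $\bR_F$ cancel by equality of mixed partials, leaving Jacobian-type products such as $\langle\nabla^\perp\bR_F,\nabla\bn\rangle$ and $\nabla^\perp S_F\cdot\nabla\bn$. The equation for $\Delta\bD$ is the divergence of \eqref{eq:DR^3}. For $\Delta\bP$, I would compute $\sum_j\partial_{x_j}\bR_F\times(\nabla^\perp\bP)_j$ directly from Lemma \ref{lm:SystCSR322}: the BAC--CAB expansion of $(\bL_F\times\nabla\bP)\times\nabla^\perp\bP$ collapses thanks to conformality $\partial_{x_1}\bP\cdot\partial_{x_2}\bP=0$ into a term proportional to $\langle\nabla S_F,\nabla^\perp\bP\rangle$; the $-H\nabla\bP$ piece yields $\Delta\bP$ through $2H\,\partial_{x_1}\bP\times\partial_{x_2}\bP=2He^{2\la}\bn=\Delta\bP$; and the $(\la-\bar\la)\nabla\bD$ piece yields a further multiple of $\Delta\bP$, once $\langle\nabla\bD\times\nabla^\perp\bP\rangle$ is computed in the frame via \eqref{identity-3R3} using $\bH=H\bn$. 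Regrouping the $\Delta\bP$ contributions on one side produces the coefficient $[1-(\la-\bar\la)]$ and the stated equation. Finally, $\Delta\la$ is the classical Liouville-type equation for the conformal factor of a Coulomb frame, following from $\bbe_1\cdot d\bbe_2=\ast d\la$.

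For the third stage, since $\bP$ is a weak immersion we have $\bbe_1,\bbe_2,\bn\in W^{1,2}_{loc}$ and $\la\in L^\infty_{loc}$, so every Jacobian appearing on the right-hand side of \eqref{eq:SysDeltaSFRFR322} for $S_F$ and $\bR_F$ is of the form $\nabla a\cdot\nabla^\perp b$ with $a,b\in W^{1,2}_{loc}$. By the Coifman--Lions--Meyer--Semmes theorem such Jacobians lie in the local Hardy space $\mathcal{H}^1_{loc}(\D^2)$, so the Newtonian potentials solving the corresponding Poisson equations lie in $W^{2,1}_{loc}\hookrightarrow W^{1,2}_{loc}$ (via Adams' theorem on Riesz potentials). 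The weighted-$\nabla\bD$ divergence terms carry a factor $(\la-\bar\la)$, which can be made small in oscillation on sufficiently small discs, and the corresponding contributions are then absorbed by a standard bootstrap argument. This yields $S_F,\bR_F\in W^{1,2}_{loc}$. The main technical obstacle is in the first stage, threading the many sign conventions and cross-product identities precisely so that the clean algebraic cancellations of \eqref{eq:SysgradSFRFR322} emerge, and in the second stage pinning down the delicate coefficient $[1-(\la-\bar\la)]$ in the $\Delta\bP$ equation.
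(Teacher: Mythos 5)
Your first two stages follow essentially the same route as the paper: the paper also eliminates $\bL_F$ by pairing $\nabla\bR_F$ with $\bn$ and with $\bbe_i$ (equivalent to your frame decomposition $\bL_F=\alpha\bbe_1+\beta\bbe_2+\gamma\bn$), uses $\bn\times\nabla^\perp\bR_F$ to re-express $\lf<\bL_F,\bn\rg>\nabla^\perp\bP$, and obtains the $\Delta\bP$ equation from $\lf<\nabla^\perp\bP\times\nabla\bR_F\rg>$ via the identities $\lf<\nabla^\perp\bP\times\nabla\bD\rg>=-\Delta\bP$ and $H\lf<\nabla^\perp\bP\times\nabla\bP\rg>=\Delta\bP$. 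Your observation that the $-H\nabla\bP$ term of Lemma \ref{lm:SystCSR322} is automatically reproduced by $\bn\times\nabla^\perp\bR_F$ (rotation by $\bn\times$ composed with the $\perp$ on the coordinate index fixes tangential pieces of the form $f\,\nabla\bP$) is correct and consistent with the final system.

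There is, however, a genuine gap in your third stage. You assert that every Jacobian on the right-hand side of \eqref{eq:SysDeltaSFRFR322} is of the form $\nabla a\cdot\nabla^\perp b$ with \emph{both} $a,b\in W^{1,2}_{loc}$, and then invoke Coifman--Lions--Meyer--Semmes. But at this point of the argument $S_F$ and $\bR_F$ are only known to lie in $W^{1,(2,\infty)}_{loc}$ (that is precisely what is being upgraded), so the terms $\lf<\nabla^\perp\bR_F,\nabla\bn\rg>$, $\nabla\bn\times\nabla^\perp\bR_F$ and $\nabla^\perp S_F\,\nabla\bn$ are products of an $L^{2,\infty}$ gradient with an $L^2$ gradient; the plain CLMS/Hardy-space theorem does not apply, and your argument is circular as written. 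The correct tool, which the paper uses, is Bethuel's refinement of the Wente inequality, which handles Jacobians $\nabla a\cdot\nabla^\perp b$ with $\nabla a\in L^{2,\infty}$ and $\nabla b\in L^2$ and still returns $\nabla\varphi\in L^2$. A second, minor point: the divergence-form terms carrying the factor $(\la-\bar\la)$ do not require any smallness-and-absorption scheme; since $\la-\bar\la\in L^\infty$ (by the Chanillo--Li estimate applied to $\Delta\la=-\lf<\nabla^\perp\bbe_1,\nabla\bbe_2\rg>$), $\nabla\bD\in L^2$ and $\bn\in L^\infty$, the argument of the divergence is already in $L^2$ and Stampacchia's gradient estimate gives the $W^{1,2}$ bound directly.
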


\begin{Rm}
A natural question arising from Proposition \ref{prop:SysRSFR322} is if actually the system \eqref{eq:SysDeltaSFRFR322} is equivalent to the frame energy equation \eqref{eq:ELeqFR3}; in analogy with the situation in the Willmore framework (see \cite{BR}-\cite{RiCours0}-\cite{Riv3}) we expect this not to  be the case. More precisely we expect the system   \eqref{eq:SysDeltaSFRFR322} to be equivalent to the conformal-constrained  Willmore equation. A second observation is that we expect the conservation laws on $S_F$ and $\bR_F$ to be the associated, via Noether's Theorem, to dilations and rotations (transformations that preserve the frame energy, as observed in the introduction); this remark in the context of Willmore surfaces is due to Yann Bernard. We will study these questions in a forthcoming work. 
\end{Rm}

\begin{proof}
Recall that we use the notation
\be\label{eq:e1xe22}
\bbe_1 \times \bbe_2 = \bn, 
\ee
so, taking the scalar product in $\R^3$ between \eqref{eq:RFR322} and $\bn$ and observing that
\begin{eqnarray}
\lf<\p_{x_1} \bR_F, \bn \rg>&=& e^\la \lf<\bL_F,\bbe_2\rg> \lf<\bbe_2 \times\bbe_1, \bn \rg>+  (\la-\bar{\la}) \; \lf<\p_{x_1}\bD, \bn\rg> = -  \lf<\bL_F,\p_{x_2} \bP\rg> + (\la-\bar{\la}) \; \lf<\p_{x_1}\bD, \bn\rg> \nonumber \\
\lf<\p_{x_2} \bR_F, \bn \rg>&=& e^\la \lf<\bL_F,\bbe_1\rg> \lf<\bbe_1 \times\bbe_2, \bn \rg>+ (\la-\bar{\la}) \; \lf< \p_{x_2}\bD, \bn\rg> =   \lf<\bL_F,\p_{x_1} \bP\rg> +  (\la-\bar{\la}) \; \lf<\p_{x_2}\bD, \bn\rg> \nonumber
\end{eqnarray}
we get
\be\label{eq:nablaRfnR322}
\lf<\nabla \bR_F, \bn \rg> =   \lf<\bL_F, \nabla^\perp \bP\rg> +  (\la-\bar{\la}) \; \lf< \nabla \bD, \bn\rg>; 
\ee
recalling \eqref{eq:SFR322} and the fact that $(\nabla^\perp)^\perp=-\nabla$, the last identity gives
\be\label{eq:nablaSfR322}
\nabla S_F = -  \lf<\nabla^\perp \bR_F, \bn \rg> +  (\la-\bar{\la}) \; \lf<\nabla^\perp \bD, \bn\rg> .
\ee
Analogously one computes $\lf<\nabla \bR_F, \bbe_i \rg>$ which, combined with \eqref{eq:nablaRfnR322},  gives
\be\label{eq:nablaRfLfprovv22}
\nabla \bR_F=  \lf<\bL_F, \nabla^\perp \bP\rg> \bn - \lf<\bL_F, \bn \rg> \nabla^\perp \bP +  (\la-\bar{\la}) \; \nabla\bD \quad.
\ee
Taking the vector product of \eqref{eq:nablaRfLfprovv22}$^\perp$ with $\bn$ gives
\be\label{nablaperpRnR322}
\nabla^\perp \bR_F \times \bn = \lf< \bL_F, \bn \rg> \nabla^\perp \bP +  (\la-\bar{\la}) \lf(\lf<\nabla^\perp\bD, \bbe_2 \rg> \bbe_1- \lf<\nabla^\perp\bD, \bbe_1 \rg> \bbe_2  \rg),
\ee
which, plugged in \eqref{eq:nablaRfLfprovv22} together with \eqref{eq:SFR322}, gives
\be\nonumber
\nabla \bR_F=    \bn \times\nabla^\perp \bR_F + \nabla^\perp S_F \; \bn + (\la-\bar{\la}) \; \nabla \bD+ (\la-\bar{\la}) \lf(\lf<\nabla^\perp\bD, \bbe_2 \rg> \bbe_1- \lf<\nabla^\perp\bD, \bbe_1 \rg> \bbe_2  \rg)\quad.
\ee
Applying the divergence and recalling that $\div(\nabla^\perp)\equiv 0$ we get the first two equations.
The very definition of $\bD$ gives the third equation.  In order to obtain the fourth equation compute  the vector product between $\nabla^\perp \bP$ and $\bR_F$ as in  \eqref{eq:RFR322}:
\begin{equation}
\lf<\nabla^\perp \bP \times \nabla \bR_F\rg> =  \lf< \nabla^\perp \bP \times({\bL}_F \times \nabla \bP) \rg>  + (\la-\bar{\la}) \;\lf<\nabla^\perp \bP \times \nabla \bD \rg>  \quad. \label{eq:NpPNR322}
\end{equation}
A short computation gives
\begin{eqnarray}
\lf<\nabla^\perp \bP \times \nabla \bD \rg>&=&\lf<\nabla^\perp \bP \times \lf( \bI{\res_g} \nabla^\perp \bP \times \bn \rg) \rg> \nonumber \\
                                           &=&  \bI{\res_g} \nabla^\perp \bP \, \lf< \nabla^\perp \bP, \bn\rg>- \bn  \lf< \bI{\res_g} \nabla^\perp \bP, \nabla^\perp \bP\rg> \nonumber\\
																					&=&-2 e^{2\la} H \bn= - \Delta \bP \quad.\label{eq:NpPtND22} 																	
\end{eqnarray}
Observing that 
$$H \lf<\nabla^\perp \bP\times \nabla \bP\rg>= 2  e^{2\la} \bH= \Delta \bP,$$
and 
$$\lf< \nabla^\perp \bP \times({\bL}_F \times \nabla \bP) \rg>= \bL_F \lf<\nabla^\perp \bP, \nabla \bP \rg>-\nabla \bP \lf< \nabla^\perp \bP, \bL_F \rg>=-\lf<\nabla \bP, \nabla^\perp S_F \rg>\quad, $$
where in the last equality we recalled \eqref{eq:SFR322} and that of of course $\lf<\nabla^\perp \bP, \nabla \bP \rg>=0$.Therefore  we can rewrite \eqref{eq:NpPNR322} as 
$$[1-(\la-\bar{\la})] \, \Delta \bP = - \lf< \nabla \bR_F \times \nabla^\perp \bP\rg> - \lf<\nabla S_F, \nabla^\perp \bP \rg> \quad .  $$
The last equation is classical, see for instance \cite[(VII.132)]{RiCours0}. 

The  improved regularity of $\la, S_F$ and $\bR_F$ is quite standard, in any case let us briefly sketch the proof for $\la$ and $S_F$ (the one for $\bR_F$ is analogous). The fact that $\la \in L^\infty(D^2)$  follows directly from the Wente-type estimate of Chanillo-Li \cite{ChLi} (for more details see also \cite[Section VII.6.4]{RiCours0}). Let us discuss the regularity of $S_F$: take any ball $B\subset D^2$ and  write $S_F$ on $B$ as
$$S_F=S_0+S_1+S_2\quad ,$$
where $S_0\in C^\infty(B)$ is the harmonic extension to $B$ of  $S_F|_{\p B}$ and $S_1,$ $S_2$ satisfy the equations
\be\label{eq:S1}
\lf\{
\begin{array}{l}
\Delta S_1= -  \lf<\nabla^\perp \bR_F, \nabla \bn \rg>  \quad \text{on} \; B\\[5mm]
S_1= 0 \quad \text{on } \p B	 \quad,	
\end{array}
\rg.
\ee
\be\label{eq:S2}
\lf\{
\begin{array}{l}
\Delta S_2= -   \div \left[ (\la-\bar{\la}) \; \lf<\nabla^\perp \bD, \bn\rg> \right] \quad \text{on} \; B\\[5mm]
S_2= 0 \quad \text{on } \p B	 \quad.
\end{array}
\rg.
\ee
Since by construction $\bR_F\in W^{1,(2,\infty)}(B)$ and $\bn \in W^{1,2}(B)$, by a refinement of the Wente inequality due to Bethuel \cite{Bet} (which is based on previous results of Coifman-Lions-Meyer and Semmes), equation \eqref{eq:S1}  implies that $S_1 \in W^{1,2}(B)$. The fact that $S_2 \in W^{1,2}(B)$ follows instead from Stampacchia gradient estimates, recalling that $\la-\bar{\la}\in L^\infty(B,\R), \bD \in W^{1,2}(B,\R^3)$ and of course $\bn\in L^\infty(B,\R^3)$.

\end{proof}

\section{Regularity of critical points for the frame energy: proof of Theorem \ref{thm:regularity}}\label{Sec:Reg}
The goal of the next Section  \ref{Sec:RegHom} is to minimize the frame energy in each  regular homotopy class  of immersed tori in $\R^3$ and to propose such a minimizer as canonical rapresentant for its own  class; to this aim, in the present section we develop the regularity theory for critical points of the frame energy. The fundamental starting point is given by the elliptic system with quadratic jacobian non linearities  satisfied by the critical points of the frame energy, namely Proposition \ref{prop:SysRSFR322}.

The strategy is to show that, for every $x_0\in \D^2$ and $r>0$ small enough, the system \eqref{eq:SysDeltaSFRFR322} with Dirichelet boundary condition has at most one solution in a suitable function space for every $0<\rho\leq r$; then, using a good slicing argument together with properties of the trace and harmonic extension, we construct a more regular solution of the system  \eqref{eq:SysDeltaSFRFR322} having the same boundary condition as the initial solution. By the uniqueness we  infer that the initial solution had to be more regular, namely in a subcritical space, then we conclude with a standard bootstrap argument that the initial  solution is actually $C^\infty$. 

\

Before stating the lemmas let us introduce some notation. 
\\In the following,  $\bP$ will  be a weak conformal immersion of $D^2$ into $\R^3$ critical for the functional $\F$. For any $\rho\in (0,1), k \in \R$ and $p\in (1,\infty)$   we will denote 
\begin{eqnarray}\nonumber 
\mathcal{E}^{k,p}(B_\rho(0))&:=&W^{k,p}(B_\rho(0),\R)\times W^{k,p}(B_\rho(0),\R^3)\times W^{k,p}(B_\rho(0),\R^3)\times W^{k+1,p}(B_\rho(0),\R^3),\nonumber \\
\mathcal{E}^{1,p}_0(B_\rho(0))&:=&W^{1,p}_0(B_\rho(0),\R)\times W^{1,p}_0(B_\rho(0),\R^3)\times W^{1,p}_0(B_\rho(0),\R^3)\times (W^{2,p}\cap W^{1,p}_0(B_\rho(0),\R^3))\nonumber,
\end{eqnarray}
Let us define  
\be\label{eq:defL}
\lf\{
\begin{array}{l}
L (A):=\Delta A +  \lf<\nabla^\perp \bB, \nabla \bn \rg> - \div \left[ (\la-\bar{\la}) \; \lf<\nabla^\perp \bC, \bn\rg> \right] \\[5mm]
L (\bB):= \Delta \bB - \nabla \bn \times\nabla^\perp \bB  - \nabla^\perp A \;  \nabla \bn\\ [3mm]
            \qquad \qquad - \div \left[ (\la-\bar{\la}) \; \nabla\bC +  (\la-\bar{\la}) \lf( \lf<\nabla^\perp\bC, \bbe_2 \rg> \bbe_1 -  \lf<\nabla^\perp\bC, \bbe_1 \rg> \bbe_2\rg) \right] \\[5mm]
L(\bC):= \Delta \bC - \div \lf( \pi_{\bn}(\nabla^2 \bPsi)  \res_g \nabla^\perp \bP \times \bn \rg) \\[5mm]
L(\bPsi):= \Delta \bPsi   +\frac{1}{1-(\la-\bar{\la})} \lf[\lf< \nabla \bB \times \nabla^\perp \bP\rg> + \lf<\nabla A, \nabla^\perp \bP \rg> \rg] \quad ,
\end{array}
\rg.
\ee
where $\bP,\bbe_{i},\bn, \lambda,\bar{\la}$ are the critical weak conformal immmersion, its normalized tangent vectors, its normal vector, its conformal factor and the mean value of the conformal factor on $B_\rho(0)$; all this terms are seen in system \eqref{eq:defL} as coefficients. 
Since $\lambda$ satisfies the elliptic equation
$$\Delta \lambda = - \lf<\nabla^\perp \bbe_1, \nabla \bbe_2 \rg>, $$
from the Wente-type estimate of Chanillo-Li \cite{ChLi} (for more details see also \cite[Section VII.6.4]{RiCours0}) we have
\begin{eqnarray}
\|\lambda-\bar{\la}\|_{L^\infty(B_\rho(0))} &\leq & C \|\nabla e_1\|_{L^2(B_\rho(0))}  \|\nabla e_2\|_{L^2(B_\rho(0))} \nonumber
\\                                                                          &\leq & C' \lf[ \int_{B_\rho(0)} |\bbe_2 \cdot \nabla \bbe_1|^2 \, dx  +  \int_{B_\rho(0)} (|\bn \cdot \nabla \bbe_1|^2+ |\bn \cdot \nabla \bbe_2|^2) \, dx  \rg]\nonumber
\\                                                                          &\leq & C'' \lf[ \int_{B_\rho(0)} |\bbe_2 \cdot d \bbe_1|_g^2 \, dvol_g  + \int_{B_\rho(0)} | \bbI |_g^2 \, dvol_g   \rg] \quad ,\label{eq:estla-labar}
\end{eqnarray}
for some $C,C',C''>0$ independent of $\bP$. 

Observe that combining  Lemma \ref{lem:Appendix(p,p)} and  \eqref{eq:estla-labar}, we get that $L$  is a linear continuous operator from  ${\mathcal E}^{1,p}(B_\rho(0))$ to ${\mathcal E}^{-1,p}(B_\rho(0))$  for every $p\in(1,\infty)$.

The  key technical lemma for proving the regularity is the following isomorphism result.

\begin{Lm}[$L$ is an isomorphism between ${\mathcal E}^{1,p}_0$ and ${\mathcal E}^{-1,p}$]\label{lem:isomorph}
Let $\bP$ be a weak conformal immersion of $D^2$ into $\R^3$ critical for the frame energy $\F$. Then there exists $r>0$ (depending on $\bP$) such that for every $\rho \in (0,r]$ the linear operator  $L$ is an isomorphism from ${\mathcal E}^{1,p}_0(B_\rho(0))$ onto ${\mathcal E}^{-1,p}(B_\rho(0))$, for every $p\in(1,\infty)$. In particular if $\bE \in {\mathcal E}^{1,p}_0(B_\rho(0))$, for some $p\in(1,\infty)$, solves the homogeneous equation $L(\bE)=0$, then $\bE=0$ a.e. on $B_\rho(0)$.
\end{Lm}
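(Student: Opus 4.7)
The plan is to write $L = L_0 + K$, where $L_0(A,\bB,\bC,\bPsi) := (\Delta A,\Delta \bB,\Delta \bC,\Delta \bPsi)$ is the diagonal Laplacian and $K$ collects the remaining coupling terms of \eqref{eq:defL}. Classical Calder\'on--Zygmund theory with zero Dirichlet data, combined with Poincar\'e's inequality, shows that $L_0$ is a continuous isomorphism from $\mathcal{E}^{1,p}_0(B_\rho(0))$ onto $\mathcal{E}^{-1,p}(B_\rho(0))$, whose inverse has operator norm bounded independently of $\rho$ after rescaling to the unit disk. The goal then becomes proving that $\|K\|_{\mathcal{E}^{1,p}_0(B_\rho)\to\mathcal{E}^{-1,p}(B_\rho)}\to 0$ as $\rho\downarrow 0$: once this holds, $L=L_0(I+L_0^{-1}K)$ is invertible by a Neumann series for every sufficiently small $\rho$, which gives both surjectivity and the stated uniqueness for the homogeneous equation $L(\bE)=0$.

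The perturbation $K$ splits naturally into two groups. The first group gathers all terms multiplied by the scalar $\la-\bar\la$: H\"older's inequality together with the Chanillo--Li-type estimate \eqref{eq:estla-labar} bounds their $W^{-1,p}(B_\rho)$ norm by $\|\la-\bar\la\|_{L^\infty(B_\rho)}$ times the $\mathcal{E}^{1,p}_0$-norm of the argument, and absolute continuity of the $L^2$ integrals of $\bbe_2\cdot d\bbe_1$ and $\bbI$ forces $\|\la-\bar\la\|_{L^\infty(B_\rho)}\to 0$ as $\rho\to 0$; this also keeps the factor $1/(1-(\la-\bar\la))$ appearing in the fourth component of $L$ uniformly bounded. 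The second group consists of the \emph{Jacobian-type} couplings $\langle\nabla^\perp\bB,\nabla\bn\rangle$, $\nabla\bn\times\nabla^\perp\bB$, $\nabla^\perp A\cdot\nabla\bn$ and $\div(\pi_{\bn}(\nabla^2\bPsi)\res_g\nabla^\perp\bP\times\bn)$. For these I would use that $A,\bB$ have zero trace on $\partial B_\rho$ in order to integrate by parts, rewriting each Jacobian in divergence form such as $\langle\nabla^\perp\bB,\nabla\bn\rangle=\div(\bB\cdot\nabla^\perp\bn)$, so that its $W^{-1,p}$ norm is controlled by $\|\nabla\bB\|_{L^p(B_\rho)}\cdot\|\bn-(\bn)_{B_\rho}\|_{\mathrm{BMO}(B_\rho)}$, which tends to $0$ since $\bn\in W^{1,2}\hookrightarrow\mathrm{VMO}$. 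The second-derivative term is handled analogously, combining the $W^{2,p}$-estimate $\|\nabla^2\bPsi\|_{L^p(B_\rho)}\le C\|\Delta\bPsi\|_{L^p(B_\rho)}$ (with $C$ scale-invariant) with the smallness of $\|\bP-(\bP)_{B_\rho}\|_{W^{1,\infty}(B_\rho)}$ after one integration by parts.

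The main obstacle I expect is precisely this second, Jacobian-type group: the factor $\nabla\bn$ sits only in $L^2$, which is the critical regularity, so a direct H\"older estimate would miss by the endpoint and one really must exploit the divergence structure together with VMO/Hardy-space duality in order to gain a small-in-$\rho$ factor. This is exactly the payoff of the hidden conservation laws obtained in Section \ref{Sec:ConsLaw}, which let one pass from the apparently critical couplings to divergences of products whose size is controlled by BMO oscillations of $\bn$ and $\bP$. For values of $p$ far from $2$ a short bootstrap (prove the claim first at a convenient $p_0$, upgrade $\bn$ and $\la$ to better spaces, then return to the fixed-point) keeps the radius $r$ independent of $p$ as stated. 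Once $\|K\|<\|L_0^{-1}\|^{-1}$, the Neumann series closes; in particular, any $\bE\in\mathcal{E}^{1,p}_0(B_\rho)$ with $L(\bE)=0$ satisfies $\bE=(I+L_0^{-1}K)^{-1}\cdot 0=0$ almost everywhere on $B_\rho$.
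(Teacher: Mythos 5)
Your overall architecture (diagonal Laplacian plus a perturbation, smallness on small balls, Neumann series) is the same as the paper's, and your treatment of the $(\la-\bar\la)$-terms and of the Jacobian couplings is essentially sound: where you invoke $\mathcal H^1$--BMO duality and the VMO property of $\bn$, the paper instead uses the Wente-type Lemma \ref{lem:Appendix(p,p)}, whose operator norm carries the factor $\|\nabla \bn\|_{L^2(B_\rho)}$, made small by the absolute continuity of the energy as in \eqref{eq:defeps0}; both mechanisms work. The gap is elsewhere: your plan requires $\|K\|_{{\mathcal E}^{1,p}_0\to{\mathcal E}^{-1,p}}\to 0$ as $\rho\downarrow 0$, and this is false. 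Two of the couplings in \eqref{eq:defL} do not shrink with $\rho$: the term $\frac{1}{1-(\la-\bar\la)}\lf[\lf<\nabla\bB\times\nabla^\perp\bP\rg>+\lf<\nabla A,\nabla^\perp\bP\rg>\rg]$ in the $\bPsi$-component and the term $\div\lf(\pi_{\bn}(\nabla^2\bPsi)\res_g\nabla^\perp\bP\times\bn\rg)$ in the $\bC$-component both carry the coefficient $\nabla\bP$, and $|\nabla\bP|=\sqrt2\,e^{\la}$ is bounded \emph{below} by a positive constant depending only on the immersion. In particular your claimed smallness of $\|\bP-(\bP)_{B_\rho}\|_{W^{1,\infty}(B_\rho)}$ is wrong: subtracting the mean does nothing to $\nabla\bP$, and $\|\nabla\bP\|_{L^\infty(B_\rho)}$ does not tend to $0$. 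Consequently $K$ has operator norm bounded away from zero uniformly in $\rho$, and $L_0(I+L_0^{-1}K)$ cannot be inverted by the Neumann series as you set it up.

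The paper gets around this by exploiting the \emph{triangular} structure of the coupling rather than uniform smallness: in the a priori estimates, $\|\nabla^2\bPsi^0\|_{L^p}$ and $\|\nabla\bC^0\|_{L^p}$ are first bounded by $\|\nabla A^0\|_{L^p}+\|\nabla\bB^0\|_{L^p}$ with a \emph{large} constant $\gamma$, and only the equations for $A^0$ and $\bB^0$ see the other unknowns through genuinely small coefficients ($\|\la-\bar\la\|_{L^\infty}$ via \eqref{eq:estla-labar} and $\|\nabla\bn\|_{L^2(B_\rho)}$ via Lemma \ref{lem:Appendix(p,p)}); substituting the first two bounds into the last two and absorbing closes the estimate, hence gives injectivity. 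Existence is then produced by a correspondingly staggered iteration in which $\bPsi_{i+1}$ and $\bC_{i+1}$ are defined from the \emph{updated} $A_{i+1},\bB_{i+1}$, so that every loop of the iteration passes through a small edge and the scheme contracts geometrically. Your argument can be repaired along the same lines — the spectral radius of $L_0^{-1}K$ is small even though its norm is not, because every cycle in the coupling graph must re-enter the $A$- or $\bB$-equation — but as written the key smallness claim on which the whole Neumann series rests does not hold.
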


\begin{proof}
From the discussion above we already know that $L:{\mathcal E}^{1,p}_0(B_\rho(0)) \to {\mathcal E}^{-1,p}(B_\rho(0))$ is a continuous linear operator. 
Our goal is to prove that $L:{\mathcal E}^{1,p}_0(B_\rho(0)) \to {\mathcal E}^{-1,p}(B_\rho(0))$ is an isomorphism, more precisely we prove  that there exists $r>0$ such that for every $\rho\in (0,r)$ and  for every $(f_A,\vec{f}_{\bB}, \vec{f}_{\bC},\vec{f}_{\bPsi}) \in {\mathcal E}^{-1,p}(B_\rho(0))$, the system $L(A^0,\bB^0,\bC^0,\bPsi^0)=(f_A,\vec{f}_{\bB}, \vec{f}_{\bC},\vec{f}_{\bPsi})$ has a unique solution $(A^0,\bB^0,\bC^0,\bPsi^0)\in {\mathcal E}^{1,p}_0(B_\rho(0))$.

First of all observe that for every  $\delta_0>0$ to be fixed later there exists $r>0$ such that
\be\label{eq:defeps0}
\int_{B_r(0)} |\bbe_2 \cdot d \bbe_1|_g^2 \, dvol_g  + \int_{B_r(0)} | \bbI |_g^2 \, dvol_g  \leq \delta_0 \quad. 
\ee 
From now on let $\rho\in (0,r]$. Observe that, thanks to \eqref{eq:defeps0} and \eqref{eq:estla-labar}, for any $\sigma>0$ there exists $\delta_0>0$ small enough such that 
\be \label{eq:la-labar}
\|\la-\bar{\la}\|_{L^\infty(B_\rho(0))}\leq \sigma^2. 
\ee

As first step we establish a priori estimates on the solutions.
\\If  $(A^0,\bB^0,\bC^0,\bPsi^0)\in {\mathcal E}^{1,p}_0(B_\rho(0))$ solve $L(A^0,\bB^0,\bC^0,\bPsi^0)=(f_A,\vec{f}_{\bB}, \vec{f}_{\bC},\vec{f}_{\bPsi})$ then, using  \eqref{eq:la-labar} and Lemma \ref{lem:Appendix(p,p)}, and  the classical Stampacchia gradient estimates for elliptic PDEs, we can estimate (all the norms are computed on $B_\rho(0)$)
\begin{eqnarray}
\|\nabla^2 \bPsi^0\|_{L^p}&\leq& \gamma \lf[ \|\vec{f}_{\bPsi}\|_{L^p}+ \|\nabla A^0\|_{L^p}+ \|\nabla \bB^0\|_{L^p} \rg] \nonumber \\ 
\|\nabla \bC^0\|_{L^p}&\leq& \gamma \lf[ \|\nabla^2 \bPsi^0\|_{L^p}+ \|\nabla \bB^0\|_{L^p} \rg]  \nonumber\\ 
\|\nabla \bB^0\|_{L^p}&\leq& \gamma \lf[ \|\vec{f}_{\bB}\|_{W^{-1,p}}+ \varepsilon_0  \lf(\| \nabla A^0\|_{L^p}+ \|\nabla \bB^0\|_{L^p} +\|\nabla \bC^0\|_{L^p}\right)   \right] \nonumber \\
\|\nabla \bA^0\|_{L^p}&\leq& \gamma \lf[ \|\vec{f}_{A}\|_{W^{-1,p}}+ \varepsilon_0  \lf(\| \nabla A^0\|_{L^p}+ \|\nabla \bB^0\|_{L^p} +\|\nabla \bC^0\|_{L^p}\right)   \right]  \nonumber\quad, 
\end{eqnarray}
for some constant $\gamma>0$. Bootstrapping the estimates above we obtain that 
$$\|(A^0,\bB^0,\bC^0,\bPsi^0)\|_{{\mathcal E}^{1,p}_0(B_\rho(0))} \leq \gamma' \lf[ \varepsilon_0 \|(A^0,\bB^0,\bC^0,\bPsi^0)\|_{{\mathcal E}^{1,p}_0(B_\rho(0))}+ \|(f_A,\vec{f}_{\bB}, \vec{f}_{\bC},\vec{f}_{\bPsi})\|_{{\mathcal E}^{-1,p}_0(B_\rho(0))} \rg].$$
Choosing $\rho>0$ small enough such that $\gamma' \varepsilon_0\leq \frac 12$, the last estimate  gives 
\be\label{eq:aPrioriEst}
\|(A^0,\bB^0,\bC^0,\bPsi^0)\|_{{\mathcal E}^{1,p}_0(B_\rho(0))}\leq \gamma''\|(f_A,\vec{f}_{\bB}, \vec{f}_{\bC},\vec{f}_{\bPsi})\|_{{\mathcal E}^{-1,p}_0(B_\rho(0))}\quad .
\ee
Since $L$ is linear, of course, the a priori estimate \eqref{eq:aPrioriEst} ensures \emph{uniqueness} of the solution to the system $L(A^0,\bB^0,\bC^0,\bPsi^0)=(f_A,\vec{f}_{\bB}, \vec{f}_{\bC},\vec{f}_{\bPsi})$ in the space ${\mathcal E}^{1,p}_0(B_\rho(0))$. 
\newline

We now construct the solution by iteration. Given $(f_A,\vec{f}_{\bB}, \vec{f}_{\bC},\vec{f}_{\bPsi})\in {\mathcal E}^{-1,p}_0(B_\rho(0))$, let $(A_0,\bB_0,\bC_0,\bPsi_0)\in  {\mathcal E}^{1,p}_0(B_\rho(0))$ be the solution to 
\be\nonumber
\lf\{
\begin{array}{l}
\Delta A_0=f_A \\[5mm]
\Delta \bB_0=\vec{f}_{\bB} \\ [5mm] 
\Delta \bPsi_0 = \frac{1}{(\la-\bar{\la})} \lf[\lf< \nabla \bB_0 \times \nabla^\perp \bP\rg> + \lf<\nabla A_0, \nabla^\perp \bP \rg> \rg] +\vec{f}_{\bPsi} \\[5mm]
\Delta \bC_0=  \div \lf( \pi_{\bn}(\nabla^2 \bPsi_0)  \res_g \nabla^\perp \bP \times \bn \rg)+ \vec{f}_{\bC}\quad.
\end{array}
\rg.
\ee
In order to define the iteration, let us denote with $\tilde{L}:{\mathcal E}^{1,p}_0(B_\rho(0))\to {\mathcal E}^{-1,p}_0(B_\rho(0)) $ the linear operator such that $L=\Delta-\tilde{L}$ (recall the definition of $L$ in \eqref{eq:defL}). Now we define  $(A_{i+1},\bB_{i+1},\bC_{i+1},\bPsi_{i+1})\in  {\mathcal E}^{1,p}_0(B_\rho(0))$ as the solution to 
\be\nonumber
\lf\{
\begin{array}{l}
\Delta A_{i+1}=\tilde{L}(A_i,\bB_i,\bC_i,\bPsi_i) \\[5mm]
\Delta \bB_{i+1}=\tilde{L}(A_i,\bB_i,\bC_i,\bPsi_i) \\ [5mm] 
\Delta \bPsi_{i+1} = \frac{1}{(\la-\bar{\la})} \lf[\lf< \nabla \bB_{i+1} \times \nabla^\perp \bP\rg> + \lf<\nabla A_{i+1}, \nabla^\perp \bP \rg> \rg]  \\[5mm]
\Delta \bC_{i+1}=  \div \lf( \pi_{\bn}(\nabla^2 \bPsi_{i+1})  \res_g \nabla^\perp \bP \times \bn \rg)\quad.
\end{array}
\rg.
\ee
Analogously as for the a priori estimates above, we estimate that 
$$\|A_{i+1}\|_{W^{1,p}}+\|\bB_{i+1}\|_{W^{1,p}}\leq \gamma \varepsilon_0 \|(A_{i},\bB_{i},\bC_{i},\bPsi_{i})\|_{{\mathcal E}^{1,p}_0}$$
which yelds
$$
\|\bPsi_{i+1}\|_{W^{2,p}}+\|\bC_{i+1}\|_{W^{1,p}}\leq \gamma \left(\|A_{i+1}\|_{W^{1,p}}+\|\bB_{i+1}\|_{W^{1,p}} \right) \leq  \gamma' \varepsilon_0  \, \|(A_{i},\bB_{i},\bC_{i},\bPsi_{i})\|_{{\mathcal E}^{1,p}_0} \quad.
$$
Combining the last two estimates we obtain that there exists $\gamma>0$, and for every $\varepsilon_0$ there exists $r>0$ such that for every $\rho\in (0,r)$ it holds
\be
\|(A_{i},\bB_{i},\bC_{i},\bPsi_{i})\|_{{\mathcal E}^{1,p}_0(B_\rho(0))}\leq (\gamma \varepsilon_0)^i \; \|(A_{0},\bB_{0},\bC_{0},\bPsi_{0})\|_{{\mathcal E}^{1,p}_0(B_\rho(0)) } \quad.
\ee
Choosing $\varepsilon_0\leq \frac{\gamma}{2}$ and $r>0$ accordingly, it is straightforward to check that quantities
$$A^0:=\sum_{i=0}^\infty A_i,\quad \bB^0:=\sum_{i=0}^\infty \bB_i,\quad \bC^0:=\sum_{i=0}^\infty \bC_i,\quad \bPsi^0:=\sum_{i=0}^\infty \bPsi_i $$
are well defined with  $(A^0,\bB^0,\bC^0,\bPsi^0)\in  {\mathcal E}^{1,p}_0(B_\rho(0))$, and that $L(A^0,\bB^0,\bC^0,\bPsi^0)=(f_A,\vec{f}_{\bB}, \vec{f}_{\bC},\vec{f}_{\bPsi})$ as desired. 

\end{proof}

As should be clear from the lemmas above, for proving the regularity it is convenient to work with functions with zero boundary value. As we will soon see, to this aim it is enough we add to the system \eqref{eq:defL} some ${\mathcal E}^{-1,4}$-terms coming from the data $(S_F,\bR_F,\bD,\bP)$. From now on, let $r>0$ be given by Lemma \ref{lem:isomorph} and $\rho\in (0,r)$.

Recall that, by Proposition \ref{prop:SysRSFR322}, $(S_F,\bR_F,\bD,\bP)\in {\mathcal E}^{1,2}_{loc}(\D^2)$ therefore, by Fubini's Theorem, for a.e. $\rho\in (0,r)$ we have that  $S_F \in W^{1,2}(\p B_\rho(0),\R)$, $\bR_F\in W^{1,2}(\p B_\rho(0),\R)$, $\bD\in W^{1,2}(B_\rho(0),\R^3)$ and $\bP\in W^{2,2}\cap W^{1,\infty}(\p B_\rho(0),\R^3)$. So, let 
\be \label{eq:defS0}
(S_0,\bR_0,\bD_0, \bP_0)\in {\mathcal E}^{1,4}(B_\rho(0))
\ee  be the extensions to $B_\rho(0)$ of the above quantities defined on $\p B_\rho(0)$ and define $(S^0,\bR^0,\bD^0,\bP^0)\in {\mathcal E}^{1,2}_0(B_\rho(0))$ as
\be\label{eq:defphi0}
(S^0,\bR^0,\bD^0,\bP^0):=(S_F-S_0,\bR_F-\bR_0,\bD-\bD_0,\bP-\bP_0) \quad.
\ee
Let us stress that $(S^0,\bR^0,\bD^0,\bP^0)$ have zero boundary value on $\p B_\rho(0)$. Recalling that  thanks to Proposition \ref{prop:SysRSFR322} it holds $L(S_F,\bR_F,\bD,\bP)=0$, we infer that 
\be\label{eq:LS0}
L(S^0,\bR^0,\bD^0,\bP^0)= (f_S, \vec{f}_{\bR},\vec{f}_{\bD},\vec{f}_{\bP}) \quad \text{on } B_\rho(0) 
\ee
for some $(f_S, \vec{f}_{\bR},\vec{f}_{\bD},\vec{f}_{\bP})\in  {\mathcal E}^{-1,4}(B_\rho(0))$ easy to compute from the definition of $L$ as in \eqref{eq:defL}.
\\Now, thanks to the Isomophism Lemma \ref{lem:isomorph} applied with $p=4$, there exists $(A^0,\bB^0,\bC^0,\bPsi^0)\in  {\mathcal E}^{1,4}_0(B_\rho(0))$ solution to  the system
\be\label{eq:LA0}
L(A^0,\bB^0,\bC^0,\bPsi^0)=(f_S, \vec{f}_{\bR},\vec{f}_{\bD},\vec{f}_{\bP}).
\ee  
But, since clearly ${\mathcal E}^{1,4}_0(B_\rho(0))\subset {\mathcal E}^{1,2}_0(B_\rho(0))$, the uniqueness statement of the Isomorphism Lemma \ref{lem:isomorph} applied this time with $p=2$ together with \eqref{eq:LS0} and \eqref{eq:LA0} implies that 
$$(S^0,\bR^0,\bD^0,\bP^0)=(A^0,\bB^0,\bC^0,\bPsi^0) \quad \text{on } B_\rho(0) \; \Rightarrow \; (S^0,\bR^0,\bD^0,\bP^0)\in {\mathcal E}^{1,4}_0(B_\rho(0)).$$
Therefore, recalling \eqref{eq:defS0} and  \eqref{eq:defphi0}, we conclude that
\be
(S_F,\bR_F,\bD,\bP)\in {\mathcal E}^{1,4}(B_\rho(0)) \quad.
\ee 

Now, plugging the information that $\bP\in W^{2,4}(B_\rho(0))$ in the Euler-Lagrange equation of the frame energy $\F$, we obtain that 
$$\Delta \bH=\bF \text{ on } B_\rho(0)$$
for some $\bF\in W^{-1,2}(B_{\rho}(0))$, so $\bH \in W^{1,2}_{loc}(B_\rho(0))$. Recall that $\Delta \la=(\nabla \bbe_1, \nabla^\perp \bbe_2)\in L^2(B_\rho(0))$, so $\la \in W^{2,2}_{loc}(B_\rho(0))$. Also $\Delta \bP=e^{-2\la} \bH \in W^{1,2}_{loc}(B_\rho(0))$, so $\bP \in W^{3,2}_{loc}(B_\rho(0))$ and in particular $\bP \in W^{2,p}_{loc}(B_\rho(0))$ for every $p\in(1,\infty)$. 
\\Now repeating the above argument, we get that $\bP\in W^{3,p}_{loc}(B_\rho(0))$ for every $p\in(1,\infty)$; the same procedure now gives that  $\bP\in W^{k,p}_{loc}(B_\rho(0))$ for every $p\in(1,\infty)$ and every $k\in \N$. Therefore $\bP$ is smooth in a neighboorod of $0$ and we have proved Theorem \ref{thm:regularity}.
\hfill$\Box$

\section{Existence of a smooth minimizer of $\F$ in regular homotopy classes of tori immersed in $\R^3$: proof of Theorem \ref{thm:MinRegHom} } \label{Sec:RegHom}
At first, it must be proved that the notion of regular homotopy class extend to the general setting of  weak immersions. This is the content of the next proposition.

\begin{Prop}\label{prop:sigmaWeak}
The notion of regular homopy class extends to the framework of weak immersions by approximation. More precisely, let $\bP\in {\cal E}(\T^2,\R^3)$ be a weak immersion; then there exists a sequence $\{\bP_k\}_{k \in \N}$ of smooth immersions and there exists a fixed regular homotopy class $\sigma$ of immersed tori in $\R^3$ such that

a) $\bP_k \in \sigma$ for every $k\in \N$,

b) $\bP_k \to \bP$  in $W^{2,2}(\T^2)\cap W^{1,\infty *}(\T^2)$.
\\
Moreover given any sequence $\{\bP_k\}_{k \in \N}$ satisfying the condition $b)$, then for $k$ large enough also condition $a)$ holds. We therefore define $\sigma$ to be the regular homotopy class of $\bP$.
\end{Prop}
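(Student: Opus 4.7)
The strategy is to split the content of the proposition into three parts: an approximation (density) step, a stability step, and a well-definedness step. Both the existence and the uniqueness of $\sigma$ ultimately reduce to the stability step.

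\textbf{Step 1 (Density of smooth immersions).} For an arbitrary $\bP\in{\cal E}(\T^2,\R^3)$, the plan is to construct a sequence $\bP_k$ of smooth immersions with $\bP_k\to\bP$ in $W^{2,2}\cap W^{1,\infty *}$. Using the existence of global conformal charts for weak immersions (via the results of Toro, M\"uller--\v{S}ver\'ak, H\'elein, and the second author cited in the introduction), I would cover $\T^2$ by finitely many conformal coordinate patches, mollify $\bP$ at scale $1/k$ in each patch, and glue via a smooth partition of unity. In a conformal chart, the second fundamental form has $L^2$-norm equivalent to the full Hessian of $\bP$, hence $\bP\in W^{2,2}$ locally and standard mollification estimates give strong $W^{2,2}$-convergence; the $W^{1,\infty *}$-convergence is immediate from $\bP\in W^{1,\infty}$ and the contractivity of convolution in $W^{1,\infty}$. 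The bilipschitz bound $|\partial_{x_1}\bP\wedge\partial_{x_2}\bP|\geq C_\bP^{-1}>0$ a.e., combined with strong $L^p$-convergence of the first derivatives (from $W^{2,2}\hookrightarrow W^{1,p}$ for every $p<\infty$ on the 2-dimensional $\T^2$), ensures that $\bP_k$ is a genuine smooth immersion for $k$ large.

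\textbf{Step 2 (Stability of the regular homotopy class).} The decisive point is: any two smooth immersions $\bP',\bP''$ sufficiently close in $W^{2,2}\cap W^{1,\infty *}$ to the weak immersion $\bP$ are regularly homotopic via the linear interpolation $H(x,t):=(1-t)\bP'(x)+t\bP''(x)$. This amounts to showing $|\partial_{x_1}H(\cdot,t)\wedge \partial_{x_2}H(\cdot,t)|>0$ everywhere on $\T^2\times[0,1]$. Expanding the wedge product and subtracting $\partial_{x_1}\bP\wedge \partial_{x_2}\bP$, the difference is controlled by a polynomial in $d\bP'-d\bP$ and $d\bP''-d\bP$. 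One then combines the a.e.\ pointwise bilipschitz lower bound on $\bP$, the strong $L^p$-convergence of first derivatives for every $p<\infty$, and the smoothness of $\bP'$ and $\bP''$ (which, via continuity, upgrades an a.e.\ positive estimate to a pointwise one) to conclude that the wedge product of $H(\cdot,t)$ stays uniformly positive. Thus $H$ is a smooth regular homotopy between $\bP'$ and $\bP''$.

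\textbf{Step 3 (Well-definedness of $\sigma$ and conclusion).} Applying Step 2 to pairs $(\bP_k,\bP_{k'})$ from the sequence of Step 1 shows that all $\bP_k$ eventually lie in a common regular homotopy class $\sigma$, which we attribute to $\bP$. For any alternative approximating sequence $\tilde\bP_k$ satisfying condition b), Step 2 applied now to the mixed pairs $(\bP_k,\tilde\bP_k)$ forces $\tilde\bP_k\in\sigma$ for $k$ large; this simultaneously proves that $\sigma$ is independent of the chosen approximation and establishes the second assertion of the proposition. The main obstacle is Step 2: weak-$*$ convergence in $W^{1,\infty}$ does not by itself yield uniform convergence of first derivatives, so the argument must carefully exploit the interplay between the $W^{2,2}$-strong convergence (giving $L^p$-convergence of gradients) and the a.e.\ bilipschitz estimate on $\bP$ to guarantee that the linear homotopy stays inside the open subset of smooth immersions throughout $\T^2\times[0,1]$.
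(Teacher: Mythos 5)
Your Step 1 and the overall architecture (construct an approximating sequence, prove a stability statement for nearby smooth immersions, deduce well-definedness by merging sequences) coincide with the paper's, and the mollification construction is indeed what the paper uses. The problem is Step 2, which carries all the weight and does not work as written. Linear interpolation $H_t=(1-t)\bP'+t\bP''$ stays inside the space of immersions only if $d\bP'$ and $d\bP''$ are close \emph{uniformly}, i.e.\ in $C^0$; but on a $2$-dimensional domain $W^{2,2}$ does not embed into $W^{1,\infty}$, so strong $W^{2,2}$-convergence together with weak-$*$ $W^{1,\infty}$-convergence only gives $L^p$-closeness of the gradients for $p<\infty$. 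Two mollifications of a weak immersion at different scales can therefore have gradients differing by $O(1)$ on a nonempty set of small measure (think of a weak immersion whose Gauss map $\bn\in W^{1,2}\setminus C^0$ oscillates near a point: one mollification resolves a small bump, the other flattens it), and at such a point $\partial_{x_1}H_t\wedge\partial_{x_2}H_t$ may vanish for some $t$. Your proposed fix --- ``smoothness of $\bP'$ and $\bP''$ upgrades an a.e.\ positive estimate to a pointwise one'' --- only upgrades $>0$ a.e.\ to $\geq 0$ everywhere unless you already have a \emph{uniform} a.e.\ lower bound, which is precisely what the $L^p$-control fails to provide. So the stability step, which is the actual content of the proposition, is not established.

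The paper closes exactly this gap by a different mechanism. It covers $\T^2$ by balls on which the frame energy is below a threshold $\e_0$ (no concentration occurs because of the strong $W^{2,2}$-convergence), uses Fubini and a slicing argument to select boundary circles $\p B_\rho(x_i)$ on which the approximants converge weakly in $W^{2,2}(\p B_\rho)$ --- and on a one-dimensional circle $W^{2,2}\hookrightarrow\hookrightarrow C^1$, so there the convergence \emph{is} uniform at the level of first derivatives; this lets one normalize both immersions to coincide to first order with a planar round circle on each boundary. The interiors are then compared not by interpolation but by the differential-topology Lemma \ref{Lm:DiskHomotopy}, i.e.\ the Smale--Hirsch h-principle together with $\pi_2(V_2(\R^3))=\pi_2(SO(3))=0$. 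This topological input is essential and cannot be replaced by a soft interpolation argument: the same soft argument would apply verbatim in $\R^4$, where $\pi_2(V_2(\R^4))\neq 0$ and where the paper explicitly warns that homotopic complexity can be lost. If you want to repair your proof, you should replace Step 2 by this slicing-plus-$h$-principle argument (the paper's ``Case I'').
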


\begin{proof}
The existence of an approximating sequence of smooth  immersions  $\{\bP_k\}_{k \in \N}$ satisfying  condition $b)$ can be achieved by a standard convolution argument recalling the two properties defining a weak immersion (see the Introduction for the definition of weak immersion; more precisely condition 1. ensure that $\bP_k$ is a smooth immersion and the uniform $W^{1,\infty}$ bound on $\bP_k$, and condition 2. implies that $\bP\in W^{2,2}(\T^2, \R^3)$ so that $\bP_k$ converges strongly in $W^{2,2}$-norm). By the strong $W^{2,2}$-convergence it follows that the quantity $\int |d \bn_k|^2 dvol_{g_{\bP_k}}$ does not concentrate. The proof that, for $k_1,k_2$ large enough, two smooth immersions $\bP_{k_1},\bP_{k_2}$ are in the same regular homotopy class is then analogous  to the proof of the no loss of homotopic complexity  in the points of non concentration of the frame energy, namely Case I below.  The independence of the regular homotopy class on the approximating sequence easily follows from the arguments above just by merging  two approximating sequences.
\end{proof}

From now till the end of this section we  fix a regular homotopy class $\sigma$ of immersions of $\T^2$ into $\R^3$ and we consider $\{\bP_k\}_{k \in \N}\subset {\cal E}(\T^2,\R^3)$ a sequence of weak immersions, and $\{\bbe_k\}_{k \in \N}\subset W^{1,2} (\T^2,\bP^*(V_2(\R^3)))$ a sequence of moving frames on $\bP_k(\T^2)$ 
 such that $(\bP_k, \bbe_k)$ is a minimizing sequence of  the frame energy $\F$ among all weak immersions  belonging to  the class $\sigma$,  and all moving frames on them.

Since $(\bP_k,\bbe_k)$ is a minimizing sequence, we can assume that the frame $\bbe_k$ minimizes the tangential frame energy $\F_T$ defined in \eqref{def:FT}, i.e. we can assume that $\bbe_k$ is a Coulomb frame. Using the Chern moving frame technique in order to  construct  conformal coordinates from a Coulomb frame (for more details see \cite{RiCours}) , we get that the weak immersions $\bP_k$ induce a \emph{smooth} conformal structure on $\T^2$; moreover, up to composition with a bilipschitz diffeomorphism of $\T^2$, the weak immersion $\bP_k$ is conformal with respect to this smooth conformal structure.  At this point, analogously to the proof of Lemma \ref{Lm:RedctLB}, we  can assume that the conformal structure is contained in the moduli space $M$ defined in \eqref{eq:defM} and that the moving frame is the coordinate one, i.e. $\bbe_k=\nabla \bP_k \, \frac{\sqrt{2}}{|\nabla \bP_k |}$.
\\

Let us observe that the conformal factors $\la_k:=\log(|\p_{x_i} \bP_k|)$ satisfy the uniform bound
\be\label{eq:Boundlak0}
\sup_{k\in \N} \|\la_k-\bar{\la}_k\|_{L^\infty(\Sigma_k)}+\|\nabla \la_k\|_{L^2(\Sigma_k)} <\infty\quad,
\ee
where $\Sigma_k$ is the flat torus corresponding to the conformal structure of $\bP_k$ and $\bar{\la}_k$ is the mean value of $\la_k$ on $\Sigma_k$. In order to obtain   \eqref{eq:Boundlak}  recall that the conformal factors satisfy  $\Delta \la_k=-<\nabla^{\perp} \bbe_k^1, \nabla \bbe_k^2>$; therefore, by Wente estimates  \cite{We},  we infer
$$\|\la_k-\bar{\la}_k\|_{L^\infty(\Sigma_k)}+\|\nabla \la_k\|_{L^2(\Sigma_k)} \leq C_0 \|\nabla \bbe_k^1 \|_{L^2(\Sigma_k)} \|\nabla \bbe_k^2 \|_{L^2(\Sigma_k)} \leq C_1 \F(\bP_k,\bbe_k)\leq C_2\quad.$$
Notice that if we rescale $\bP_k$ by a factor $e^{-\bar{\la}_k}$ we get that the conformal factors of the rescaled immersions are uniformly bounded in  $L^\infty(\Sigma_k)$; since the frame energy $\F$ is invariant under rescaling, we can replace the minimizig sequence with the rescaled one, so that we can assume
\be\label{eq:Boundlak}
\sup_{k\in \N} \|\la_k\|_{L^\infty(\Sigma_k)}+\|\nabla \la_k\|_{L^2(\Sigma_k)} <\infty\quad.
\ee
 Recalling \eqref{eq:F=FT+bbI}, we have that the the second fundamental forms of the $\bP_k$'s  are uniformly bounded in $L^2(\Sigma_k)$ and therefore, thanks to \eqref{eq:Boundlak}, we infer
\be\label{eq:PkW22}
\sup_{k\in \N}\|\bP_k\|_{W^{2,2}(\T^2,\R^3)}< \infty\quad.
\ee

Now we claim that the conformal structures are contained in a compact subset of the moduli space.  
\\To this aim, observe that the proof of Proposition \ref{Prop:LowerBound1}  can be repeated for weak immersions (just notice that for a.e. $x$ the curve  $\gamma_x$ is $W^{2,2}$, so we can apply Fenchel Theorem and then integrate in $x$; same argument for $y$. All the other computations in the proof makes sense a.e. so the integrated inequality holds as well). Since by definition of $M$ we have $\theta \in \lf[\frac{\pi}{3}, \frac{2\pi} {3}\rg],$ if  $\tau_2 \to \infty$ then the right hand side of \eqref{eq:LB1} diverges to $+\infty$, which implies the claim.   Therefore, up to subsequences in $k$, the conformal structures $\Sigma_k$ converge smoothly in the moduli space to a limit $\Sigma=\Sigma_{\infty}$.  
\\

Combining the convergence of the conformal structures  and the estimates \eqref{eq:Boundlak}-\eqref{eq:PkW22} , we infer that there exists a weak conformal immersion $\bP=\bP_\infty\in {\cal E}(\T^2, \R^3)$, with conformal factor $\la=\la_{\infty}=\log{|\p_{x_i}\bP|}$ and coordinate moving frame $\bbe=\bbe_{\infty}=e^{-\la}\nabla \bP$ such that, up to subsequences,
\be\label{eq:Convk}
\bP_k  \rightharpoonup \bP \, \text{  weakly}-W^{2,2}(\Sigma), \quad \bbe_k \rightharpoonup \bbe  \, \text{  weakly}-W^{1,2}(\Sigma), \quad \la_k \rightharpoonup \la  \, \text{  weakly}-W^{1,2}(\Sigma)\cap L^\infty(\Sigma)^*.
\ee
Let us stress that the limit $\bP$ is not branched thanks to the uniform estimates on the conformal factors \eqref{eq:Boundlak}, which of course pass to the limit under the above converge. 
\\Using the conformal invariance of the Dirichelet integral, the lower semicontinuity of the $L^2$-norm under weak convergence, and the smooth convergence of the conformal structures, it follows that
\begin{eqnarray}
\F(\bP,\bbe)&=&\frac{1}{4}\sum_{i=1}^2 \int_{\Sigma} |d \bbe_i |_{g_{\bP}}^2 \, dvol_{g_{\bP}}= \frac{1}{4}\sum_{i=1}^2 \int_{\Sigma} |\nabla \bbe_i |^2 \, dx \leq \liminf_{k} \frac{1}{4} \sum_{i=1}^2 \int_{\Sigma_k} |\nabla \bbe^i_k |^2 \, dx\nonumber \\
 &=& \liminf_{k} \frac{1}{4}\sum_{i=1}^2 \int_{\Sigma_k} |d \bbe^i_k| _{g_{\bP_k}}^2 \, dvol_{g_{\bP_k}}=\liminf_{k} \F(\bP_k,\bbe_k)\quad. \label{eq:LSC}
\end{eqnarray}
Since $(\bP_k,\bbe_k)$ is by construction a minimizing sequence, thanks to  \eqref{eq:LSC}, in order to finish the proof of Theorem \ref{thm:MinRegHom} we just have to show that  the weak immersion $\bP$ is an element of the regular homotopy class $\sigma$. The regularity of $\bP$ will then follow from Theorem \ref{thm:regularity} and from the criticality (actually we have even minimality) of $\bP$ for the frame energy $\F$.
\\

In order to prove that there is no loss of homotopic complexity in the limit, we are going to show that we can cover $\Sigma$ with a finite number of balls and that on every ball there is no loss of homotopic complexity, with good control of the boundary; in oder to do so,  we start with detecting the points of energy concentration  for the frame energy.
\\Let $\e_0>0$ small to be chosen later;  for every $x \in \Sigma$ and  $k\in \N$ we define
\be\label{eq:defrhok}
\rho_{k,x}:=\inf\lf\{ \rho>0: \, \int_{B_{2\rho}(x)} |\nabla \bbe_k|^2 \, dx \geq \e_0  \rg\},
\ee 
where $B_{2\rho}(x)$ is the ball in $\R^2$ of center $x$ and radius $2\rho$ with respect to the flat metric. 
\\For a given $k\in \N$, the collection $\{B_{\rho_{k,x}} (x)\}_{x \in \Sigma}$ forms a Besicovitch covering of $\Sigma$ therefore, by the Besicovitch covering theorem, there exists a finite subcovering  
$\{B_{\rho_{k,x_i^k}} (x^k_i)\}_{i\in I_k}$ such that any point in $\Sigma$ is covered by at most $c_\Sigma\in \N$ balls, where $c_\Sigma$ does not depend on $k \in \N$. In fact,  from the uniform bound on the frame energy with respect to $k\in \N$, the cardinality of $I_k$ is  uniformly bounded in $k$ thus, up to subsequences, we can assume that $I$ is independent of $k$ (and finite) and that for all $ i \in I$ 
\be\label{eq:convxr}
x^k_i \to x_i, \quad \rho_{k,x_i^k}\to \rho_i \quad \text{ as } k\to \infty \quad,
\ee
for some $x_i\in \Sigma$ and $\rho_i \geq 0$. Letting
\be\label{eq:defJ}
J :=\{i\in I: \,  \rho_i =0\}\quad \text{and } I_0=I\setminus J,
\ee 
it is clear that  $\{\overline{B_{\rho_i} (x_i )}\}_{i\in I_0}$ covers $\Sigma$; moreover, for the strict convexity of the euclidean balls, the points in $\Sigma$ which are not contained in $\cup_{i\in I_0} B_{\rho_i} (x_i )$ cannot accumulate and therefore are isolated and hence finite:
\be\label{eq:defai}
\{a_1,\ldots,a_N \}:=\Sigma\setminus \cup_{i\in I_0}B_{\rho_i} (x_i )\quad .
\ee
In order to show that $\bP$ is an element of $\sigma$, we are going to show that there is no loss go homotopic complexity in the limit. We are going to  consider separately the regions of $\Sigma$ where there is energy concentration and where there is not. Before starting with the latter, observe that we can assume that $\bP_k$ and $\bP$ are smooth immersions, indeed, almost by definition (see Proposition \ref{prop:sigmaWeak}), one can approximate a weak immersion via a smooth immersion without changing the regular homotopy class. \\

\emph{Case I: no loss of homotopic complexity in $B_{\rho_i}(x_i)$, $i \in I_0$}.  From \eqref{eq:Convk} and  \eqref{eq:defrhok}-\eqref{eq:convxr}, using  Fubini's Theorem (and a standard selection argument ensuring the independence of $k$, see for instance \cite[Lemma B.1]{SiL}) we have that there exists $\rho\in (\rho_i,2\rho_i)$ such that, up to subsequences in  $k$, it holds
\be\label{eq:EstConv}
\sup_{k}\int_{\p B_\rho(x_i)} |\nabla \bbe_k|^2 \, dl \leq 2\e_0 \quad \text{and} \quad \bP_k  \rightharpoonup \bP \, \text{  weakly}-W^{2,2}({\p B_\rho(x_i)}.
\ee
Recalling that we can write $|\nabla \bbe_k|^2=|\nabla \bn_k|^2+ 2|\nabla \la_{k}|^2$, by Schwartz inequality we infer that
\be\label{eq:nablanpB}
\int_{\p B_\rho(x_i)} |\nabla \bn_k| dl + \int_{\p B_\rho(x_i)} |\nabla \la_k| \, dl\leq \sqrt{C \e_0 \rho}\quad,
\ee 
for some universal $C>0$. In particular, called  $\k_g$ the geodesic curvature, it  follows that
$$\lf| \int_{\bP_k(\p B_\rho(x_i))} |\k_g| \, dl-2\pi \rg|\leq \sqrt{C \e_0 \rho}.$$
The combination of the last two estimates implies that $\bP_k(\p B_\rho(x_i))$ is a graph over a planar simple closed  curve $\vec{\alpha}(\cdot):S^1\to \R^3$ (which, up to a rotation, we can assume lying on the plane $\R^2=\{z=0\}\subset \R^3$) and, thanks to \eqref{eq:EstConv}, the same holds for $\bP(\p B_\rho(x_i))$. Therefore, up to a regular homotopy, we can assume that $\vec{\alpha}$ is parametring a round circle and that both $\bP_k$ and  $\bP$ coincide with $\vec{\alpha}$ up to first order, i.e. 
\be\label{eq:alpha}
 \bP(\cdot)=\bP_k(\cdot) =\vec{\alpha}(\cdot)\;\text{ and }\;  \p_r\bP(\cdot)= \p_r \bP_k(\cdot) =\frac{\p}{\p r} \in \R^2 \;  \text{on } \,  \p B_\rho(x_i)\quad.
\ee
At this point,  Lemma \ref{Lm:DiskHomotopy} in the appendix  concludes the proof  of  Case I.
\\

\emph{Case II: no loss of homotopic complexity in the concentration points $\{a_1,\ldots,a_N\}$.}  We are going to show the claim at a fixed concentration point $a_1$, of course the argument for the other $a_i$'s is  analogous. By definition of concentration point, there exists a sequence of radii $\rho_k\downarrow 0$ such that 
$$\liminf_k \int_{B_{\rho_k}(a_1)} |\nabla \bn_k|^2 dx \geq \e_0 \quad.$$
Moreover, by the finiteness of  the frame  energy, it is easy to construct  a sequence $\{R_k\}_{k\in \N}$ with the following properties: 
\be\label{eq:Rk}
R_k\downarrow 0,\quad R_k>\rho_k, \quad \lim_{k\to \infty} \frac{\rho_k}{R_k}= 0 \quad \text{and} \quad  \lim_{k\to \infty} \int_{B_{R_k}(a_1)\setminus B_{\rho_k}(a_1) }  |\nabla \bbe_k|^2 dx =0\quad. 
\ee
Now let us rescale the sequence $\bP_k$ by defining
\be\label{eq:defphihat}
\hat{\bP}_k(x):=\frac{1}{R_k} \bP_k(a_1+R_k(x-a_1))\quad.
\ee 
Observe that, by the invariance of the frame energy under scaling,   \eqref{eq:Rk} implies that for every $\delta\in(0,1/4)$ we have
\be\label{eq:frameto0}
0=\lim_{k\to \infty} \int_{B_{1}(0)\setminus B_{\delta}(0) }  |\nabla \hat{\bbe}_k|^2 dx =\lim_{k\to \infty} \int_{B_{1}(0)\setminus B_{\delta}(0) }  |\nabla \hat{n}_k|^2 + 2|\nabla \hat{\la}_k|^2 dx ,
\ee
where of course $\hat{\la}_k=\log|{\p_{x_1}\hat{\bP}_k}|$,  $\hat{\bbe}_k=e^{-\hat{\la}_k} \nabla \hat{\bP}_k$ is the coordinate moving frame associated to $\hat{\bP}_k$ and $\hat{\bn}_k$ is the normal vector.  In order to compare the regular homotopy type, let us also rescale the limit $\bP=\bP_\infty$ given in \eqref{eq:Convk} by the same factors, i.e. we define
\be\label{eq:phiinfk}
\hat{\bP}^k_\infty:=\frac{1}{R_k} \bP_\infty(a_1+R_k(x-a_1))\quad.
\ee 
Since by \eqref{eq:LSC} the frame energy of $\bP_\infty$ is finite, we also have
\be\label{eq:frameinfto0}
0=\lim_{k\to \infty}\int_{B_{R_K}(0)}|\nabla \bbe_\infty|^2 \, dx=\lim_{k\to \infty} \int_{B_{1}(0)}  |\nabla \hat{\bbe}^k_\infty|^2 dx =\lim_{k\to \infty} \int_{B_{1}(0) }  |\nabla \hat{n}^k_\infty|^2 + 2|\nabla \hat{\la}^k_\infty|^2 dx \quad,
\ee
with obvious meaningg of the hatted quantities. 
Let $\hat{\e}_0>0$ small to be fixed later. Combining \eqref{eq:frameto0} and \eqref{eq:frameinfto0}, analogously to Case I (using Fubini's Theorem, a selection argument and Schwartz inequality), we get that there exists $\hat{\rho} \in (1/4,1)$ such that, up to subsequences in $k$, it holds
\be\label{eq:hatprho}
\int_{\p B_{\hat{\rho}}(0) }   |\nabla \hat{n}_k| + |\nabla \hat{\la}_k|+|\nabla \hat{n}^k_\infty| + |\nabla \hat{\la}^k_\infty|\,  dl \leq \hat{\e}_0 \quad. 
\ee
Now,analogously to Case I, we get that  for $\hat{\e}_0$ small enough-or, in other words, for  $k$ large enough- $\bP_k$ and $\bP^k_\infty$ are graphs over  a planar simple closed curve; hence, up to a regular homotopy, we can assume that they coincide up to first order with a planar round circle as in \eqref{eq:alpha}. By  using Lemma \ref{Lm:DiskHomotopy}, we conclude that $\hat{\bP}_k|_{B_{\hat{\rho}}(0)}$ and $\hat{\bP}^\infty_k|_{B_{\hat{\rho}}(0)}$ are regularly homotopic with good control on the boundary homotopy; therefore,  rescaling back by $R_k$, we obtain the same statement for $\bP_k|_{B_{\hat{\rho}R_k}(a_1)}$ and $\bP_\infty|_{B_{\hat{\rho}R_k}(a_1)}$, as desired. 

\begin{Rm}
As a side remark let us observe that, by refining the estimates of Case II and by a cutting and filling procedure-adapted to the frame energy-analogous to the proof of \cite[Lemma 5.2] {MoRi1}, it is possible to prove that actually Case II does not occur. Indeed it is possible to  replace $\bP_k(B_{\rho_k}(a_1)) $ by a flat disk without changing the regular homotopy type and saving $\e_0/2>0$ energy. This would clearly contradict the assumption that $\bP_k$ is a minimizing sequence.  Since this argument is not needed and it is a bit more complicated than Case II discussed above, we decided  to present  this simpler proof.
 \end{Rm}

Summarizing, we proved that $\bP_k$ and $\bP_\infty \in {\cal E}(\T^2, \R^3)$ are elements of the same regular homotopy class $\sigma$. 
 Therefore, by the lower semicontinuity \eqref{eq:LSC}, $\bP_\infty$ is a minimizer of the frame energy $\F$ in his regular homotopy class among weak immersions and $W^{1,2}$-moving frames. In particular $\bP_\infty$ is a critical point of $\F$, and by Theorem \ref{thm:regularity} we conclude that $\bP_\infty$ is smooth. This completes the proof of Theorem \ref{thm:MinRegHom}. 
\hfill$\Box$

\section{Appendices}
\subsection{Appendix A: some classical computations in conformal coordinates}
In this appendix we consider a weak conformal immersion $\vec{\Phi}$ of the disk $D^2$  into $\R^3$. We will denote with  $g$  the pull back metric on $D^2$ induced by the immersion $\vec{\Phi}$. We will use local positive conformal coordinates $x^1, x^2$ on $D^2$  and we will call $(\vec{e}_1,\vec{e}_2)$ the local orthonormal frame such that $\p_{x_1}\vec{\Phi}=e^\la\,\vec{e}_1$ and $\p_{x_2}\vec{\Phi}=e^\la\,\vec{e}_2$; $\lambda:=|\partial_{x^1} \vec{\Phi}|=|\partial_{x^2} \vec{\Phi}|$ is called conformal factor.

\subsubsection{Variation of classical geometric quantities}
The computations in the present subsection are rather classical, we repeat them here mainly to fix the notations. Given a vector field $\vec{w}\in C^\infty_c(\R^3,\R^3)$, consider the one   parameter family of weak  immersions of $D^2$ into $\R^3$ given by $\vec{\Phi}_t:=\vec{\Phi}+t\vec{w}$.

The normal vector $\bn_t$ of  $\vec{\Phi}$ is given by
\be
\label{III.4}
\vec{n}_t=\star_{{\R}^3}\lf(\frac{\p_{x_1}\vec{\Phi}_t\wedge\p_{x_2}\vec{\Phi}_t}{|\p_{x_1}\vec{\Phi}_t\wedge\p_{x_2}\vec{\Phi}_t|}\rg)=\bbe_1(t) \times \bbe_2(t)\quad ,
\ee
and  can be expanded as
\be
\label{III.4a}
\vec{n}_t=\vec{n}+t\ (a_1\,\vec{e}_1+a_2\,\vec{e}_2) +o(t)\quad.
\ee
Since $\vec{n}_t\perp\p_{x_i}\vec{\Phi}_t$ and $\p_{x_i}\vec{\Phi}_t=\p_{x_i}\vec{\Phi}+t\,\p_{x_i}\bw$, we have
\be
\label{III.4c}
(\p_{x_i}\bw, \bn)+ a_i\, e^\la=0 \quad.
\ee 
Combining (\ref{III.4a})  and (\ref{III.4c}) gives then

\be
\label{III.9}
\vec{n}_t=\vec{n}-t  <(d\bw,\bn) \; \bn,d\vec{\Phi}>_g+o(t)\quad,
\ee
which gives
\be
\label{III.10}
\frac{d\vec{n}_t}{dt}(0)=- <(d\bw,\bn) \; \bn,d\vec{\Phi}>_g\quad .
\ee

We have $g_{ij}=(\p_{x_i}\vec{\Phi}_t, \p_{x_j}\vec{\Phi}_t)$, thus
\be
\label{0III.10f}
\frac{d}{dt}g_{ij}(0)=(\p_{x_i}\vec{w}, \p_{x_j}\vec{\Phi})+ (\p_{x_i}\vec{\Phi}, \p_{x_j}\vec{w}).
\ee
Since $\sum_{i}g^{ki}g_{ij}=\delta_{kj}$ and since $g_{ij}(0)=e^{2\la}\, I_2$ where $I_2$ is the $2\times 2$ identity matrix, differentiating  we get
\be
\label{0III.10h}
\frac{d}{dt}g^{kj}(0)=-e^{-4 \la } \frac{d}{dt} g_{kj}= - \,e^{-4\la}\, \lf[ (\p_{x_k}\vec{w}, \p_{x_j}\vec{\Phi})+ (\p_{x_k}\vec{\Phi}, \p_{x_j}\vec{w})\rg] \quad.
\ee
We also have 
\be
\label{III.3a}
\begin{array}{l}
\ds\frac{d}{dt}\lf(dvol_{g_t}\rg)=\frac{d}{dt}\lf(det(g_{ij})\rg)^{1/2}dx_1\wedge dx_2=2^{-1}\lf(det(g_{ij})\rg)^{-1/2}\,e^{2\la}\,\lf[\frac{dg_{11}}{dt}+\frac{dg_{22}}{dt}\rg]\, dx_1\wedge dx_2\\[5mm]
\ds\quad\quad\quad\quad\quad= <d\bP,d\bw>_g \, dvol_{g_0} \quad.
\end{array}
\ee

\subsubsection{Codazzi identity in complex coordinates}
Called $z=x_1+ix_2$ the complex coordinate associated to $(x_1,x_2)$ and  $\p_z=2^{-1}(\p_{x_1}-i\,\p_{x_2})$, 
the Weingarten form of the weak conformal immersion $\vec{\Phi}\in {\cal E}(D^2,\R^3)$ is defined as
\begin{equation}\label{eq:defh0}
\vec{h}^0:=2\, \pi_{\vec{n}}(\p^2_{z^2}\vec{\Phi})\ dz\otimes dz\quad; 
\end{equation}
the scalar Weingarten form (in case of codimension one immersions) is $h^0:=2 (\vec{n}, \p^2_{z^2}\vec{\Phi})\ dz\otimes dz$.
\\Now let us state and prove the classical Codazzi indentity using the complex coordinates. 

\begin{Lm}[Codazzi identity]
\label{Codazzi}
Let $h^0$ be the scalar Weingarten form defined above  and denote $g_{{\C}}:=e^{2\la}\ d\ov{z}\otimes dz$. Then it holds
\be
\label{xsIII.21}
\ov{\p}h^0=g_{{\C}}\otimes\p H\quad.
\ee
\hfill $\Box$
\end{Lm}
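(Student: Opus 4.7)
The identity is purely local and coordinate–based, so the plan is to fix the conformal chart $z=x_1+ix_2$ and verify the equivalent scalar identity
\[
\partial_{\bar z}\bigl[2(\vec{n},\partial_z^2\vec{\Phi})\bigr]\;=\;e^{2\lambda}\,\partial_z H,
\]
from which the stated equality of $(2,1)$–tensors follows by reading off $\bar\partial h^0=\partial_{\bar z}(\,\cdot\,)\,d\bar z\otimes dz\otimes dz$ and $g_{\mathbb C}\otimes\partial H=e^{2\lambda}\partial_z H\,d\bar z\otimes dz\otimes dz$.

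The ingredients I would assemble first are the standard formulas in conformal coordinates. From conformality, $(\partial_z\vec{\Phi},\partial_z\vec{\Phi})=0$ and $(\partial_z\vec{\Phi},\partial_{\bar z}\vec{\Phi})=e^{2\lambda}/2$; from $\Delta\vec{\Phi}=2e^{2\lambda}\vec H$ one gets $\partial_z\partial_{\bar z}\vec{\Phi}=\tfrac{e^{2\lambda}}{2}H\,\vec n$, hence
\[
(\vec n,\partial_z\partial_{\bar z}\vec{\Phi})=\tfrac{e^{2\lambda}}{2}H,\qquad (\partial_{\bar z}\vec n,\partial_z\vec{\Phi})=-\tfrac{e^{2\lambda}}{2}H,
\]
the last line obtained by differentiating $(\vec n,\partial_z\vec{\Phi})=0$ in $\bar z$. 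Second, since $g_{zz}=g_{\bar z\bar z}=0$ and $g_{z\bar z}=e^{2\lambda}/2$, a direct Christoffel computation gives $\Gamma^z_{zz}=2\partial_z\lambda$ and $\Gamma^{\bar z}_{zz}=0$, so the Gauss decomposition reads
\[
\partial_z^2\vec{\Phi}\;=\;2(\partial_z\lambda)\,\partial_z\vec{\Phi}\;+\;(\vec n,\partial_z^2\vec{\Phi})\,\vec n.
\]
Finally $(\partial_{\bar z}\vec n,\vec n)=0$ from $|\vec n|=1$.

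With these facts in hand, I would carry out the Leibniz expansion
\[
\partial_{\bar z}(\vec n,\partial_z^2\vec{\Phi})\;=\;(\partial_{\bar z}\vec n,\partial_z^2\vec{\Phi})\;+\;(\vec n,\partial_z\partial_z\partial_{\bar z}\vec{\Phi}),
\]
using commutation of partials in the second term. Substituting the Gauss decomposition into the first summand yields $(\partial_{\bar z}\vec n,\partial_z^2\vec{\Phi})=2(\partial_z\lambda)(\partial_{\bar z}\vec n,\partial_z\vec{\Phi})=-e^{2\lambda}H\,\partial_z\lambda$, while for the second summand one rewrites $\partial_z\partial_{\bar z}\vec{\Phi}=\tfrac{e^{2\lambda}}{2}H\,\vec n$, pulls a $\partial_z$ through, and uses $(\partial_z\vec n,\vec n)=0$ to obtain $(\vec n,\partial_z\partial_z\partial_{\bar z}\vec{\Phi})=\partial_z\bigl(\tfrac{e^{2\lambda}}{2}H\bigr)=\tfrac{e^{2\lambda}}{2}\partial_z H+e^{2\lambda}H\,\partial_z\lambda$. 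The two $H\,\partial_z\lambda$ terms cancel and, after multiplying by $2$, the desired equality $\partial_{\bar z}(2(\vec n,\partial_z^2\vec{\Phi}))=e^{2\lambda}\partial_z H$ drops out.

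There is no real obstacle here beyond bookkeeping: the step that deserves the most care is keeping the tangent/normal splitting of $\partial_z^2\vec{\Phi}$ consistent with the vanishing of $\Gamma^{\bar z}_{zz}$ in conformal coordinates (otherwise one misses the cancellation of the $H\,\partial_z\lambda$ terms). Once those two algebraic identities are in place, the lemma follows by a one–line calculation.
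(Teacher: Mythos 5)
Your proof is correct and follows essentially the same route as the paper: a direct computation in the conformal chart hinging on commuting the mixed partials and on $\p_z\p_{\ov z}\vec{\Phi}=\tfrac{e^{2\la}}{2}H\,\vec{n}$. The only difference is organizational — the paper differentiates the vector quantity $\vec{H}^0=2\,\p_z(e^{-2\la}\p_z\vec{\Phi})$ and projects onto $\vec{n}$ at the end, whereas you project first and invoke the Gauss decomposition explicitly; the cancellations are identical.
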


\begin{proof}
First of all observe that 
\begin{equation}\label{eq:h0H0}
\vec{h}^0:=2\, \pi_{\vec{n}}(\p^2_{z^2}\vec{\Phi})\ dz\otimes dz =e^{2\la}\ \vec{H}^0\ dz\otimes dz \quad,
\end{equation}
where
\be\label{eq:H0}
\vec{H}^0=2\,\p_z\lf(e^{-2\la}\p_z\vec{\Phi}\rg)=2^{-1} e^{-2\la}\ \pi_{\vec{n}}(\p^2_{x^2_1}\vec{\Phi}-\p^2_{x^2_2}\vec{\Phi})-
i\ e^{-2\la}\ \pi_{\vec{n}}(\p^2_{x_1x_2}\vec{\Phi})
=[H^0_{\Re}+i\,H^0_{\Im}]\ \vec{n}\quad.
\ee
This gives
\be
\label{xsIII.17}
\lf\{
\begin{array}{l}
\ds H^0_{\Re}=-\frac{e^{-2\la}}{2}\,\lf[(\p_{x_1}\vec{n},\p_{x_1}\vec{\Phi})-(\p_{x_2}\vec{n},\p_{x_2}\vec{\Phi})\rg]=e^{-2\la}{\mathbb I}^0_{11}=-e^{-2\la}{\mathbb I}^0_{22}\\[5mm]
\ds H^0_{\Im}=e^{-2\la}\,(\p_{x_1}\vec{n},\p_{x_2}\vec{\Phi})=e^{-2\la}\,(\p_{x_2}\vec{n},\p_{x_1}\vec{\Phi})=-e^{-2\la}{\mathbb I}^0_{12}\quad.
\end{array}
\rg.
\ee
We have
\[
\begin{array}{l}
\p_{\ov{z}}\vec{H}^0=2\,\p_{\ov{z}}\p_z\lf(e^{-2\la}\,\p_z\vec{\Phi}\rg)=2\,\p_{{z}}\p_{\ov{z}}\lf(e^{-2\la}\,\p_z\vec{\Phi}\rg)\\[5mm]
\quad=-4\,\p_z\lf(e^{-2\la}\,\p_{\ov{z}}\la\,\p_z\vec{\Phi}\rg)+2\,\p_z\lf(e^{-2\la}\,\p^2_{z\ov{z}}\vec{\Phi}\rg)=-4\,\p_z\lf(e^{-2\la}\,\p_{\ov{z}}\la\,\p_z\vec{\Phi}\rg)+2^{-1}\,\p_z\lf(e^{-2\la}\,\Delta\vec{\Phi}\rg) \quad.
\end{array}
\]
Hence we have obtained the following identity
\be  \nonumber
\p_{\ov{z}}\vec{H}^0=-4\,\p_z\lf(e^{-2\la}\,\p_{\ov{z}}\la\,\p_z\vec{\Phi}\rg)+\p_z\vec{H}-\p_z\vec{\Phi}\quad.
\ee
(Notice that the term $\p_z\vec{\Phi}$ comes from the fact that for $\bP:\T^2\hookrightarrow \Sp^3 \subset \R^4$ we have $\frac 12 e^{-2\la}\Delta \bP= \bH-\bP$; on the other hand if $\bP:\T^2\hookrightarrow \R^3$ one has $\frac 12 e^{-2\la}\Delta \bP= \bH$ so the term $\p_z\vec{\Phi}$ is not present).  Taking the scalar product with $\vec{n}$ gives then
\be \nonumber
\p_{\ov{z}}H^0=-2\,\p_{\ov{z}}\la\,H^0+\p_zH
\ee
from which we deduce
\be 
\label{xsIII.20}
\p_{\ov{z}}\lf(e^{2\la}\,H^0\rg)= e^{2\la}\ \p_zH\quad,
\ee
which is the thesis. Notice that the identity can also be rewritten locally as 
\be
\label{xsIII.20.0}
\lf\{
\begin{array}{l}
\ds\p_{x_1}{\mathbb I}^0_{11}+\p_{x_2}{\mathbb I}^0_{12}=e^{2\la}\ \p_{x_1}H\\[5mm]
\ds\p_{x_2}{\mathbb I}^0_{11}-\p_{x_1}{\mathbb I}^0_{12}=-e^{2\la}\ \p_{x_2}H \quad.
\end{array}
\rg.
\ee
\end{proof}

\subsection{Appendix B: a lemma of functional analysis}
 
In this appendix, for the reader's convenience, we recall the following (known but maybe non completely standard)  lemma of functional analysis which  plays a key  role in the proof of the regularity.
\begin{Lm}\label{lem:Appendix(p,p)}
Let $b \in W^{1,2}(D^2)$ and $p \in (1,\infty)$ be given; then for every $a \in W^{1,p}(D^2)$ there exists a unique solution $\varphi\in W^{1,p}(D^2)$  of the equation 
\be\label{eq:syst(p,p)}
\lf\{ 
\begin{array}{l}
\Delta \varphi = \partial_x a \, \p_y b- \p_y a  \, \p_x b \quad \text{on } D^2 \\[5mm]
\varphi =0 \quad \text{on  } \p D^2,
\end{array}
\rg.
\ee
moreover tha linear map $L^p(D^2)\ni \nabla a \mapsto \nabla \varphi \in L^p(D^2)$ is continuous.
\end{Lm}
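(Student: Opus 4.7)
Any two solutions $\varphi_1, \varphi_2\in W^{1,p}(D^2)$ of the Dirichlet problem differ by a function in $W^{1,p}_0(D^2)$ which is harmonic, hence identically zero by elliptic regularity and the maximum principle. So uniqueness is immediate.

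\textbf{Overall strategy.} The plan is to split the existence and continuity into three cases according to $p$ versus $2$, using that $\Delta : W^{1,p}_0(D^2)\to W^{-1,p}(D^2)$ is an isomorphism for every $p\in(1,\infty)$ (Calder\'on--Zygmund). The two available representations of the Jacobian,
\[
\partial_x a\,\partial_y b-\partial_y a\,\partial_x b \;=\; \mathrm{div}(a\,\nabla^\perp b)\;=\;-\mathrm{div}(b\,\nabla^\perp a),\qquad \nabla^\perp=(-\partial_y,\partial_x),
\]
together with the pointwise bound $|J(a,b)|\leq|\nabla a|\,|\nabla b|$, will provide the source estimates.

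\textbf{Case $1<p<2$.} By Sobolev $W^{1,p}(D^2)\hookrightarrow L^{2p/(2-p)}(D^2)$ and H\"older's inequality with the identity $\tfrac{2-p}{2p}+\tfrac{1}{2}=\tfrac{1}{p}$, one obtains $\|a\,\nabla^\perp b\|_{L^p}\leq C\|a\|_{W^{1,p}}\|\nabla b\|_{L^2}$. Solving $\Delta\varphi=\mathrm{div}(a\,\nabla^\perp b)$ with zero trace then yields $\varphi\in W^{1,p}_0(D^2)$ with the continuity estimate. \textbf{Case $p>2$.} Here the raw H\"older estimate in $L^p\times L^2$ already gives $J(a,b)\in L^r$ with $\tfrac{1}{r}=\tfrac{1}{p}+\tfrac{1}{2}\in(\tfrac{1}{2},1)$, so $r\in(1,2)$. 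Interior $L^r$-elliptic regularity produces $\varphi\in W^{2,r}(D^2)\cap W^{1,r}_0(D^2)$ with $\|\varphi\|_{W^{2,r}}\leq C\|J(a,b)\|_{L^r}$, and since $r<2$ the planar Sobolev embedding $W^{2,r}(D^2)\hookrightarrow W^{1,q}(D^2)$ with $\tfrac{1}{q}=\tfrac{1}{r}-\tfrac{1}{2}=\tfrac{1}{p}$ yields exactly $q=p$ and the required bound.

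\textbf{Case $p=2$: the main obstacle.} Both elementary arguments above degenerate at $p=2$: the Sobolev exponent $2p/(2-p)$ blows up from below, while $r=2p/(p+2)$ reaches the critical value $1$ from above, where $L^1$-data no longer yields $W^{2,1}$-solutions. This case is handled by the sharp form of the Wente inequality due to Coifman--Lions--Meyer--Semmes: for $a,b\in W^{1,2}$ the Jacobian $J(a,b)$ lies in the local Hardy space $\mathcal{H}^1(\R^2)$ with $\|J(a,b)\|_{\mathcal{H}^1}\leq C\|\nabla a\|_{L^2}\|\nabla b\|_{L^2}$, and solving $\Delta\varphi=J(a,b)$ with Dirichlet condition gives $\varphi\in W^{1,2}_0(D^2)\cap L^\infty(D^2)$ by $\mathcal{H}^1$--$\mathrm{BMO}$ duality and elliptic regularity. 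The hardest point is precisely this compensated-compactness endpoint; away from it the result is elementary.
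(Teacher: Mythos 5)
Your proof is correct, and for $p\neq 2$ it coincides with the paper's argument: the same div-form estimate $\|a\,\nabla^\perp b\|_{L^p}\leq C\|\nabla a\|_{L^p}\|\nabla b\|_{L^2}$ via Sobolev embedding for $1<p<2$ (the paper subtracts the mean $\bar a$ so that the bound involves only $\nabla a$, which you should also do to get the stated continuity of $\nabla a\mapsto\nabla\varphi$; this is harmless since $J(a,b)=J(a-\bar a,b)$), and the same $L^r$--$W^{2,r}$--Sobolev chain with $r=2p/(p+2)\in(1,2)$ for $p>2$. Where you genuinely diverge is the endpoint $p=2$: you invoke the Coifman--Lions--Meyer--Semmes theorem, placing $J(a,b)$ in the Hardy space ${\mathcal H}^1$ and using ${\mathcal H}^1$--BMO duality, whereas the paper simply observes that the linear map $\nabla a\mapsto\nabla\varphi$ (for fixed $b$) is already known to be bounded on $L^{p_0}$ and on $L^{p_1}$ with $p_0<2<p_1$, and concludes by Riesz--Thorin interpolation. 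The paper's route is more elementary, requiring no compensated-compactness input at all for this lemma; your route is heavier but yields the stronger classical Wente conclusion $\varphi\in W^{1,2}_0\cap L^\infty\cap C^0$ with the bilinear estimate $\|\varphi\|_{L^\infty}+\|\nabla\varphi\|_{L^2}\leq C\|\nabla a\|_{L^2}\|\nabla b\|_{L^2}$, which is more than the lemma asks for (and which the paper obtains elsewhere, when needed, from Chanillo--Li and Bethuel). Both are complete proofs; if you keep yours, just make the Poincar\'e normalization in the subcritical case explicit.
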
  

\begin{proof}
As first step let us assume $p>2$. By H\"older inequality, we have that $\nabla a \, \nabla^\perp b \in L^{\frac{2p}{p+2}}(D^2)$ so by classical elliptic theory there exists a unique solution $\varphi \in W^{2, \frac{2p}{p+2}}(D^2)$ and $\|\varphi\|_ {W^{2, \frac{2p}{p+2}}} \leq C \|\nabla a\|_{L^p} \, \|\nabla b\|_{L^2}$. We conclude  by Sobolev embedding.

Now let us assume $p<2$. Let $\bar{a}$ be the mean value of $a$ on $D^2$. Observe that the system \eqref{eq:syst(p,p)} is equivalent to the following one
\be\label{eq:syst(p,p)'}
\lf\{ 
\begin{array}{l}
\Delta \varphi = \partial_x ( (a-\bar{a}) \, \p_y b)- \p_y ((a-\bar{a})  \, \p_x b) \quad \text{on } D^2 \\[5mm]
\varphi =0 \quad \text{on  } \p D^2,
\end{array}
\rg.
\ee
By the Poincar\'e-Sobolev inequality we know that $\|a-\bar{a}\|_{L^{\frac{2p}{2-p}}} \leq C \|a\|_{L^p}$, so by H\"older inequality we get that 
$$\|\nabla((a-\bar{a}) \nabla^\perp b)\|_{L^p}\leq C \|a\|_{L^p} \|b\|_{L^2} $$
and we conclude with classical elliptic theory.

Finally, the continuity of the map  $L^2(D^2)\ni \nabla a \mapsto \nabla \varphi \in L^2(D^2)$ follows by the classical Riesz-Torin interpolation Theorem.
\end{proof}

\subsection{Appendic C: a lemma of differential topology}
In this subsection we recall the following  well known Lemma of classical  differential topology; we  wish to thank Brian White and Yasha Eliashberg for an helpful discussion about this point.
\begin{Lm}\label{Lm:DiskHomotopy}
Let us identify $\R^2$ with $\{z=0\}\subset \R^3$. Let $D^2$ be the unit disk in  $\R^2$ and $\bP$  be a smooth immersion of $D^2$ into $\R^3$ such that $\bP|_{\p D^2}=Id_{\p D^2}$ and $\p_{r} \bP|_{\p D^2}= \frac{\p}{\p r}\in \R^2$. 

Then there exists a regular homotopy from $\bP$ to the identity map of $D^2$, $Id_{D^2}$, relative to $\p D^2$; in other words there exists  $\vec{H}(\cdot,\cdot)\in C^1(\D^2\times [0,1], \R^3)$ such that for every $t\in [0,1]$ the map $\bH_t:=\bH(\cdot,t):D^2\to \R^3$ is an immersion satisfying the boundary conditions $\bH_t|_{\p D^2}=Id_{\p D^2}, (\p_r \bH)|_{\p D^2} =\frac{\p}{\p r}$,  and moreover   $\vec{H}_0=\bP$, $\vec{H}_1=Id_{D^2}$.  
\end{Lm}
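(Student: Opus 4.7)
The strategy is to invoke the Smale--Hirsch $h$-principle for immersions \cite{Hirsch59}, \cite{Smale58}, which reduces the (relative) regular homotopy classification of immersions $D^{2}\hookrightarrow \R^{3}$ to a purely homotopy-theoretic question about maps into a Stiefel manifold, and then to observe that the relevant homotopy group vanishes.

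First, since $\dim D^{2}=2<3=\dim \R^{3}$, the relative Smale--Hirsch theorem applies: the space $\mathrm{Imm}_{\partial}(D^{2},\R^{3})$ of smooth immersions with the prescribed $1$-jet on $\partial D^{2}$ (i.e.\ $\bP|_{\partial D^{2}}=Id_{\partial D^{2}}$ and $\p_{r}\bP|_{\partial D^{2}}=\p/\p r$) is weakly homotopy equivalent, via the differential $\bP\mapsto d\bP$, to the space $\mathrm{Mono}_{\partial}(TD^{2},T\R^{3})$ of fibrewise-injective bundle maps $TD^{2}\to T\R^{3}$ that extend the $1$-jet of the inclusion on $\partial D^{2}$. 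Thus it suffices to produce a continuous path in $\mathrm{Mono}_{\partial}(TD^{2},T\R^{3})$ from $d\bP$ to $d(Id_{D^{2}})$ relative to $\partial D^{2}$.

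Next, I would trivialise both bundles: $TD^{2}$ is trivial because $D^{2}$ is contractible, and $T\R^{3}$ is trivial via the standard parallelisation. After choosing these trivialisations, an element of $\mathrm{Mono}_{\partial}(TD^{2},T\R^{3})$ covering a base map $f:D^{2}\to\R^{3}$ is exactly a map $\Psi:D^{2}\to V_{2}(\R^{3})$ into the Stiefel manifold of orthonormal $2$-frames in $\R^{3}$ (after a Gram--Schmidt deformation retract). The boundary data prescribed by the lemma forces $\Psi|_{\partial D^{2}}=\gamma$, where $\gamma:S^{1}\to V_{2}(\R^{3})$ is the canonical loop sending $\theta$ to the frame $(\p/\p r,\p/\p\theta)$ at that angle. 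Both $d\bP$ and $d(Id_{D^{2}})$ yield maps $\Psi_{\bP},\Psi_{Id}:D^{2}\to V_{2}(\R^{3})$ with the \emph{same} boundary value $\gamma$, so it remains to homotope $\Psi_{\bP}$ to $\Psi_{Id}$ relative to $\partial D^{2}$.

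The set of homotopy classes of maps $D^{2}\to V_{2}(\R^{3})$ extending a fixed $\gamma$ forms a torsor under $\pi_{2}(V_{2}(\R^{3}))$ (glue two such extensions along $\partial D^{2}$ to obtain a map $S^{2}\to V_{2}(\R^{3})$). Since $V_{2}(\R^{3})\cong SO(3)\cong \mathbb{R}P^{3}$, and the universal cover $S^{3}$ is $2$-connected, one has $\pi_{2}(V_{2}(\R^{3}))=\pi_{2}(SO(3))=\pi_{2}(S^{3})=0$. Consequently $\Psi_{\bP}$ and $\Psi_{Id}$ are homotopic relative to $\partial D^{2}$; pulling this homotopy back through the Smale--Hirsch equivalence yields the required regular homotopy $\bH_{t}$ preserving the $1$-jet on $\partial D^{2}$ and interpolating between $\bP$ and $Id_{D^{2}}$.

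The main technical obstacle is the relative form of Smale--Hirsch with prescribed $1$-jet on the boundary (rather than just a prescribed boundary map). This is standard, but care is required to check that the weak equivalence $\bP\mapsto d\bP$ survives the restriction to configurations agreeing with the identity up to first order on $\partial D^{2}$; this is precisely the boundary data corresponding, under trivialisation, to the fixed map $\gamma$ on $S^{1}$, so the reduction goes through. Everything else reduces to the elementary computation $\pi_{2}(SO(3))=0$.
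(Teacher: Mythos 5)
Your proposal is correct and follows essentially the same route as the paper's own (sketched) proof: both reduce the relative classification of immersions of $D^2$ into $\R^3$ with fixed boundary $1$-jet to the Smale--Hirsch $h$-principle and conclude from the vanishing of $\pi_2(V_2(\R^3))=\pi_2(SO(3))=0$. You simply spell out in more detail the trivialisation and torsor arguments that the paper leaves implicit.
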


\begin{proof}
Let us sketch the main idea of the classical proof. The work of Smale \cite{Smale58}-\cite{Smale59} reduces the classification of the regular homotopy classes of immersions of the disk $D^k$ into $\R^n$ (fixed near the boundary) to the computation of the homotopy group $\pi_k(V_k(\R^n))$, where $V_{k}(\R^n)$ is the Stiefel  manifold of $k$-frames in $\R^n$. For $k=2, n=3$ we have $\pi_2(V_{2}(\R^3))=\pi_2(SO(3))=0$.

This reduction is performed by the so called h-principle. The main references are the classical paper of Hirsch  \cite{Hirsch59}  as well as Gromov's  book \cite{GroPDR} and the recent  book of  Eliashberg-Mishachev \cite{ElMih}.
\end{proof}

\end{document}